\newcommand{\N}{{{\mathbf N}}}
\newcommand\R{{{\mathbf R}}}
\newcommand\D{{{\mathbf D}}}
\newcommand\T{{{\operatorname T}}}
\renewcommand\P{{{\mathbf P}}}
\newcommand\C{{{\mathbf C}}}
\renewcommand\O{{{\mathcal O}}}
\renewcommand\H{{\operatorname{H}}}
\renewcommand\Im{{\operatorname{Im}}}
\renewcommand\Re{{\operatorname{Re}}}
\newcommand\diag{{\operatorname{diag}}}
\newcommand\low{{\operatorname{low}}}
\newcommand\high{{\operatorname{high}}}
\newcommand\bounded{{\operatorname{bounded}}}
\newcommand\local{{\operatorname{local}}}
\newcommand\med{{\operatorname{med}}}
\newcommand\lin{{\operatorname{lin}}}
\newcommand\Ball{{{\mathbf B}}}
\renewcommand\L{{\mathcal L}}
\newcommand\eps{\varepsilon}
\renewcommand\div{{\operatorname{div}}}
\theoremstyle{plain}
  \newtheorem{theorem}[subsection]{Theorem}
  \newtheorem{proposition}[subsection]{Proposition}
  \newtheorem{lemma}[subsection]{Lemma}
\theoremstyle{remark}
  \newtheorem{remark}[subsection]{Remark}
\theoremstyle{definition}
  \newtheorem{definition}[subsection]{Definition}
\begin{document}

\title[Global well-posedness of Maxwell-Klein-Gordon]{Global well-posedness of the
Maxwell-Klein-Gordon equation below the energy norm}
\author{Markus Keel}
\address{Department of Mathematics, U. Minnesota, Minneapolis, MN 55455}
\email{keel@math.umn.edu}

\author{Tristan Roy}
\address{Dapartment of MAthematics, UCLA, Los Angeles CA 90095-1555}
\email{triroy@math.ucla.edu}

\author{Terence Tao}
\address{Department of Mathematics, UCLA, Los Angeles CA 90095-1555}
\email{tao@math.ucla.edu} \subjclass{35J10,42B25}

\vspace{-0.3in}
\begin{abstract}
We show that the Maxwell-Klein-Gordon equations in three dimensions
are globally well-posed in $H^s_x$ in the Coulomb gauge for all $s >
\sqrt{3}/2 \approx 0.866$.  This extends previous work of
Klainerman-Machedon \cite{kl-mac:mkg} on finite energy data $s \geq
1$, and Eardley-Moncrief \cite{eardley} for still smoother data.  We
use the method of almost conservation laws, sometimes called the
``I-method",  to construct an almost conserved quantity based on the
Hamiltonian, but at the regularity of $H^s_x$ rather than $H^1_x$.
One then uses Strichartz, null form, and commutator estimates to
control the development of this quantity. The main technical
difficulty (compared with other applications of the method of almost
conservation laws) is at low frequencies, because of the poor
control on the $L^2_x$ norm. In an appendix, we demonstrate the
equations' relative lack of smoothing - a property that presents
serious difficulties for studying rough solutions using other known
methods.
\end{abstract}

\maketitle \setcounter{tocdepth}{1} \tableofcontents

\section{Introduction}

\subsection{The (MKG-CG) system}

Let $\R^{1+3}$ be Minkowski space endowed with the usual metric
$\eta := \diag(-1,1,1,1)$. Let $\phi: \R^{1+3} \to \C$ be a
complex-valued field, and let $A_\alpha: \R^{1+3} \to \R$ be a real
one-form. Here and in what follows, we use Greek indices to denote
the indices of Minkowski space, and Roman indices to denote spatial
indices (e.g. $\alpha, \beta, \gamma \in \{0,1,2,3\}; i,j,k \in
\{1,2,3\})$, raised and lowered in the usual manner.  One can then
think of $A$ as a $U(1)$ connection, and can define the
\emph{covariant derivatives} $D_\alpha$ by
\begin{equation}\label{conn-def}
D_\alpha \phi := (\partial_\alpha + i A_\alpha) \phi.
\end{equation}
We can define the curvature $F_{\alpha\beta}$ of the connection $A$
as the real anti-symmetric tensor
\begin{equation}\label{curv-def}
 F_{\alpha\beta} := \frac{1}{i} [D_\alpha, D_\beta] = \partial_\alpha A_\beta - \partial_\beta A_\alpha.
\end{equation}

The \emph{(massless) Maxwell-Klein-Gordon equations} for a complex
field $\phi$ and a one-form $A_\alpha$ are given by
\begin{equation*}\tag{MKG}
\begin{split}
\partial^\beta F_{\alpha \beta} &= \Im( \phi \overline{D_\alpha \phi} ) \\
D_\alpha D^\alpha \phi &= 0;
\end{split}
\end{equation*}
which are the Euler-Lagrange equations for the Lagrangian
$$
\int_{\R \times \R^3}  \frac{1}{2} D_{\alpha}\phi \overline{
D^{\alpha} \phi} + \frac{1}{4}F_{\alpha \beta} F^{\alpha \beta}\ dx
dt.
$$
Throughout this paper we follow the convention that repeated indices
are summed over their range. (For example, here $\partial^\beta
F_{\alpha \beta} := \sum_{\beta= 0}^3 \partial^\beta F_{\alpha
\beta}$ for each $\alpha$.) We split $A$ into the temporal component
$A_0$ and the spatial component $\underline{A} := (A_1, A_2, A_3)$.
We similarly split the covariant spacetime gradient $D_\alpha$ into
the covariant time derivative $D_0 = \partial_t + i A_0$ and the
covariant spatial gradient $\underline{D} := (D_1, D_2, D_3) =
\nabla_x + i \underline{A}$.

The Maxwell-Klein-Gordon system of equations has the gauge
invariance
$$
\phi \mapsto e^{i\chi} \phi; \quad A_\alpha \mapsto A_\alpha -
\partial_\alpha \chi
$$
for any (smooth) potential function $\chi: \R^{1+3} \to \R$.  From
this and some elementary Hodge theory, one can place this system of
equations in the \emph{Coulomb gauge} $\nabla_x \cdot \underline{A}
= 0$. In this gauge the Maxwell-Klein-Gordon equations become the
following overdetermined elliptic-hyperbolic system of equations
(see \cite{kl-mac:mkg}):
\begin{align}
\Delta A_0 & = - \Im(\phi \overline{D_0 \phi}) \label{a0-eq}\\
\partial_t \nabla_x A_0 &= - (1-\P) \Im (\phi \overline{\nabla_x \phi}) + (1-\P) (\underline{A} |\phi|^2)\label{a0t-eq}\\
\Box \underline{A} &= - \P \Im (\phi \overline{\nabla_x \phi}) + \P (\underline{A} |\phi|^2)\label{ba-eq}\\
\Box \phi &= - 2i (\P \underline{A}) \cdot \nabla_x \phi
+ 2 iA_0 \partial_t \phi + i(\partial_t A_0) \phi + |\underline{A}|^2 \phi - A_0^2 \phi \label{bphi-eq}\\
\nabla_x \cdot \underline{A} &= 0; \label{div0-eq}
\end{align}
here $\Box$ is the standard d'Lambertian
$$ \Box := \partial_\alpha \partial^\alpha = -\partial_t^2 + \Delta$$
and $\P := \Delta^{-1} d^* d$ is the spatial Leray projection onto
divergence-free vector fields
$$ \P \underline{A} := \Delta^{-1} \nabla_x \times (\nabla_x \times \underline{A}); \quad (1-\P) \underline{A} = \Delta^{-1} \nabla_x (\nabla_x \cdot \underline{A}).$$
Observe that $\P$ can be written as a polynomial combination of 
Riesz transforms $R_j := |\nabla_x|^{-1} \partial_j$, where
$|\nabla_x| := \sqrt{-\Delta}$.

We refer to the system \eqref{a0-eq}-\eqref{div0-eq} collectively as
(MKG-CG). We shall write $\Phi := (A_0,\underline{A},\phi)$ to
denote the entire collection of fields in (MKG-CG), and use
$\underline{\Phi} := (\underline{A}, \phi)$ to isolate the
``hyperbolic'' or ``dynamic'' component of these fields.  (From
\eqref{a0-eq} we see that $A_0$ obeys an elliptic equation rather
than a hyperbolic one.)

We can study the Cauchy problem for (MKG-CG) by specifying the
initial data\footnote{Here and in the sequel we use $\phi[t]$ as
short-hand for $(\phi(t), \phi_t(t))$.} $\Phi[0]$.  Although we
specify initial data for $A_0$, it is essentially redundant
(assuming some mild decay conditions on $A_0$ at infinity) since by
\eqref{a0-eq}, \eqref{a0t-eq}, \eqref{div0-eq} the data $\Phi[0]$
must obey the compatibility conditions
\begin{equation}\label{compat}
\begin{split}
\div \underline{A}(0) &= \div \partial_t \underline{A}(0) \; = \;  0\\
\Delta A_0(0) &= - \Im\left(\phi(0) \overline{D_0 \phi(0)}\right)\\
\partial_t \nabla_x A_0(0) &= - (1-\P) \Im\left(\phi(0) \overline{\underline{D} \phi(0)}\right).
\end{split}
\end{equation}
In Section \ref{elliptic-sec} we show how these conditions allow
$A_0$ to be reconstructed from $\underline \Phi$.

\begin{remark}
If we ignore the ``elliptic part'' of the equations, then
heuristically the system (MKG-CG) takes the schematic
form\footnote{The $\O()$ notation is made precise in the second
paragraph of Section \ref{caricature-sec} below.  For now, the
notation can be taken to mean ``terms that look like".  }
\begin{equation}\label{bad-caricature}
\Box \Phi = \O( \Phi \nabla_{t,x} \Phi ) + \O( \Phi \Phi \Phi )
\end{equation}
although this caricature does not capture the full structure of the
equation.  Indeed, in \cite{klainerman:nulllocal} it was observed
that the interplay between \eqref{div0-eq} and the bilinear terms in
\eqref{ba-eq}, \eqref{bphi-eq} allow us to write the most important
components of the quadratic portion $\O(\Phi \nabla_{t,x} \Phi)$ of
the nonlinearity in terms of the null forms $ Q_{jk}(\varphi,\psi)
:=
\partial_j \varphi \partial_k \psi - \partial_k \varphi \partial_j \psi$.
We shall return to this point in Section \ref{caricature-sec}.
\end{remark}

\begin{remark}
The system (MKG-CG) is invariant under the scaling
\begin{equation}\label{mkg-scaling}
\Phi(t,x) \mapsto \frac{1}{\lambda} \Phi(\frac{t}{\lambda},
\frac{x}{\lambda})
\end{equation}
which suggests that the natural scale-invariant space for the
initial data $\Phi[0]$ is $\dot H^{1/2} \times \dot H^{-1/2}$.
\end{remark}

For any\footnote{We remark that the condition $s > 1/2$, besides
being natural from scaling considerations, is also important for
making sense of the non-linearity $\O( \Phi \nabla_{t,x} \Phi )$ and
the compatibility conditions \eqref{compat}, since when $s < 1/2$
one cannot make sense of a product of an $H^s_x$ function and an
$H^{s-1}_x$ function, even in the sense of distributions.  We do not
consider here the delicate issue of what happens at the critical
regularity $s=1/2$.} $s > 1/2$, define the norm $[H^s]$ on initial
data\footnote{As is usual for wave equations, regularity in time and
regularity in space are essentially equivalent, so we always expect
$\partial_t \Phi$ to have one lower degree of spatial regularity
than $\Phi$.} by
\begin{equation} \label{hsbracket}
\| \Phi[0] \|_{[H^s]} := \| \nabla_{x,t} \Phi(0) \|_{H^{s-1}_x} + \|
\underline{\Phi}(0) \|_{H^s_x} \end{equation} where $H^s_x$ is the
usual inhomogeneous Sobolev norm $\| u \|_{H^s_x} := \| \langle
\nabla\rangle^s u \|_{L^2_x}$, not to be confused with the
homogeneous Sobolev norm $\| u \|_{\dot H^s_x} = \| |\nabla_x|^s u
\|_{L^2_x}$.  Here and in the sequel we use $\langle x \rangle$ as
short-hand for $(1 + |x|^2)^{1/2}$.  We refer to $[H^1]$ in
particular as the \emph{energy class}.

\begin{remark}
The energy class $[H^1]$ is almost the $H^1_x \times L^2_x$ norm of
$\Phi[0]$, a difference being that we do not place $A_0$ in $L^2_x$.
Indeed, even if $\underline{\Phi}$ is smooth and compactly
supported, we see from \eqref{a0-eq} and the fundamental solution of
the Laplacian that $A_0$ might only decay as fast as $O(1/|x|)$ at
infinity, which is not in $L^2_x$.  Thus we see that the non-local
nature of the Coulomb gauge causes some difficulties\footnote{On the
other hand, we do not have these issues with the time derivative
$\partial_t A_0$.  Indeed, if $\underline{\Phi}(0) \in H^1_x$ then
from \eqref{a0t-eq} and some Sobolev embedding we see that $\nabla_x
\partial_t A_0 \in L^{6/5}$, and hence that $\partial_t A_0 \in
L^2_x$.} with the low frequency component of $A_0$.  Although these
difficulties will cause much technical inconvenience, they are not
the main enemy in the low regularity theory, and we recommend that
the reader ignore all mention of low frequency issues at a first
reading.  In particular, the reader should initially ignore the
technical distinctions between the inhomogeneous Sobolev norm
$H^s_x$ and the homogeneous counterpart $\dot H^s_x$.
\end{remark}

\subsection{Prior results}

The following local and global well-posedness results are known.
Regarding global solutions, if the initial data $\Phi[0]$ is smooth
and obeys the compatibility conditions \eqref{compat}, then there is
a unique smooth global solution of (MKG-CG) with initial data
$\Phi[0]$; see \cite{eardley}.  Furthermore, one has global
well-posedness in the energy class and above:

\begin{theorem}\label{h1-gwp}\cite{kl-mac:mkg}  Let $s \geq 1$, and $\Phi[0] \in [H^s]$ obey the compatibility conditions \eqref{compat}.  Then there exists a global solution $\Phi$ to (MKG-CG) with initial data $\Phi[0]$.  Furthermore, for each time $t$ the solution map $\Phi[0] \mapsto \Phi[t]$ is a continuous map from $[H^s]$ to $[H^s]$, and we have the bound
\begin{equation}\label{poly}
\| \Phi[T] \|_{[H^s]} \leq C(s,\|\Phi[0]\|_{[H^s]}) \langle
T\rangle^{C(s)}
\end{equation}
for all $T > 0$, where $C(s,\|\Phi[0]\|_{[H^s]})$ and $C(s)$ are
positive quantities depending only on $s, \|\Phi[0]\|_{[H^s]}$ and
on $s$ respectively.
\end{theorem}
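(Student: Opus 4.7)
The plan is to follow the standard three-step program: local well-posedness at the energy regularity $s=1$, conservation of the Hamiltonian to obtain a global $[H^1]$ bound, and persistence of regularity to lift to $s>1$ with a polynomial-in-$T$ growth. Since $[H^s] \hookrightarrow [H^1]$ for $s\geq 1$, it suffices to establish the theorem at $s=1$ first and then bootstrap.

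First I would establish a local theory at the energy regularity by viewing \eqref{a0-eq}--\eqref{div0-eq} as a coupled wave-elliptic system: $A_0$ is recovered from $\underline{\Phi}$ by the elliptic equations \eqref{a0-eq}--\eqref{a0t-eq}, while $(\underline{A},\phi)$ is propagated by the wave equations \eqref{ba-eq}--\eqref{bphi-eq} with nonlinearity of schematic shape $\O(\Phi \nabla_{t,x}\Phi) + \O(\Phi^3)$. Naive iteration in $X^{1,1/2+}$ spaces fails for the bilinear piece in three dimensions. The crucial input of \cite{kl-mac:mkg} is that the Coulomb constraint \eqref{div0-eq} allows one to rewrite the most dangerous bilinear contributions, namely $\P\underline{A}\cdot \nabla_x \phi$ and $\P\Im(\phi\overline{\nabla_x \phi})$, in terms of the null forms $Q_{jk}(\varphi,\psi) = \partial_j \varphi\,\partial_k\psi - \partial_k\varphi\,\partial_j\psi$, modulo cubic remainders that are harmless at the energy regularity. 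Null forms do satisfy the needed bilinear wave-Sobolev estimate, so a standard fixed-point argument closes in a suitable $X^{1,1/2+}$-type space and yields a unique local solution on $[0,T]$ with $T = T(\|\Phi[0]\|_{[H^1]})$ and continuous dependence on data.

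Next I would invoke the conserved energy coming from the displayed Lagrangian, namely
$$ E[\Phi(t)] := \int_{\R^3} \tfrac{1}{2}\, D_\alpha \phi \, \overline{D^\alpha \phi} + \tfrac{1}{4}\, F_{\alpha\beta} F^{\alpha\beta} \, dx, $$
which is formally time-independent for smooth solutions and extends to $[H^1]$ solutions by approximation together with the continuity from Step~1. Expanded in the Coulomb gauge, $E$ coercively controls $\|\nabla_{t,x}\phi\|_{L^2_x}^2 + \|\nabla_x \underline{A}\|_{L^2_x}^2 + \|\partial_t \underline{A}\|_{L^2_x}^2$, up to a cross term involving $A_0 \Im(\phi\overline{\partial_t\phi})$ that is absorbed via \eqref{a0-eq} and Sobolev embedding (this is where the low-frequency awkwardness of $A_0$ flagged in the remark first appears, but it does not destroy coercivity). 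This yields an a priori bound on $\|\underline{\Phi}[t]\|_{H^1_x \times L^2_x}$ uniform in $t$, and iteration of the local theorem then produces a global solution at $s=1$.

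For $s>1$ I would run persistence of regularity. Applying $\langle\nabla\rangle^{s-1}$ to \eqref{ba-eq}--\eqref{bphi-eq} and redoing the Step~1 estimates in an $X^{s,1/2+}$-type space, each nonlinear contribution factors as a product whose ``smoother'' factor is bounded using the $[H^1]$ control from Step~2 and whose ``rougher'' factor carries the $[H^s]$ norm linearly. This produces a Gronwall-type inequality
$$\|\Phi[T+\tau]\|_{[H^s]} \;\leq\; C(\|\Phi[0]\|_{[H^1]})\, \|\Phi[T]\|_{[H^s]}$$
on each local time-step of length $\tau = \tau(\|\Phi[0]\|_{[H^1]})$; iterating over $\lfloor T/\tau\rfloor$ steps yields the claimed polynomial-in-$T$ bound \eqref{poly}. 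The main obstacle throughout is Step~1: extracting enough of the null-form/Coulomb-gauge cancellation to supply the missing bilinear $X^{s,b}$ estimates while simultaneously managing the nonlocality of $\P$ and the slow spatial decay of $A_0$. Once that local theory is in hand, energy conservation and persistence of regularity are comparatively routine.
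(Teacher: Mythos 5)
This theorem is not proved in the paper at all: it is imported verbatim from Klainerman--Machedon \cite{kl-mac:mkg}, so there is no internal proof to compare against. Your outline (null-form local theory at the energy regularity in the Coulomb gauge, conservation of the Hamiltonian, persistence of regularity) is indeed the standard route of \cite{kl-mac:mkg}, and the first two steps are sketched at a reasonable level, modulo the known large-data subtlety for the elliptic component $A_0$ (which cannot be scaled small because $L^2_x$ is supercritical; the paper's Section \ref{elliptic-sec} handles this variationally, and \cite{kl-mac:mkg} must work for it as well).

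There are, however, two genuine problems with your argument as written. First, the quantitative conclusion does not follow from your Step~3: iterating a multiplicative per-step bound $\|\Phi[T+\tau]\|_{[H^s]} \leq C(\|\Phi[0]\|_{[H^1]})\,\|\Phi[T]\|_{[H^s]}$ over $\lfloor T/\tau\rfloor$ steps yields $\|\Phi[T]\|_{[H^s]} \lesssim C^{T/\tau}\|\Phi[0]\|_{[H^s]}$, i.e.\ \emph{exponential} growth in $T$, not the polynomial bound \eqref{poly} that the theorem asserts. To obtain $\langle T\rangle^{C(s)}$ one needs a sharper per-step statement, e.g.\ that the increment of the high norm on each energy-length time step is additive and controlled by quantities depending only on the conserved energy (or is sublinear in the $[H^s]$ norm), which requires exploiting the bilinear/null structure more carefully than ``smoother factor times rougher factor.'' Second, your Step~2 claims a bound on $\|\underline{\Phi}[t]\|_{H^1_x\times L^2_x}$ \emph{uniform in $t$}; the Hamiltonian \eqref{hamil-def} controls only covariant and gradient terms (cf.\ Lemma \ref{hamil-control}), not the $L^2_x$ norms of $\phi$ and $\underline{A}$ themselves, which can grow in time (at most linearly, by the fundamental theorem of calculus). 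This does not obstruct global existence at $s=1$, but it means even the $s=1$ case of \eqref{poly} already involves a growing norm, and the growth bookkeeping must be carried through Step~3 rather than assumed away.
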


One of the key ingredients in obtaining global well-posedness (as
opposed to just local well-posedness) is the well-known fact that
the flow (MKG-CG) preserves the Hamiltonian
\begin{equation}\label{hamil-def}
\begin{split}
\H[\Phi[t]] := \int& \frac{1}{2} |\nabla_x A_0(t) - \partial_t
\underline{A}(t)|^2
+ \frac{1}{2} |\nabla_x \times \underline{A}(t)|^2\\
&+ \frac{1}{2}|D_0 \phi(t)|^2 + \frac{1}{2} |\underline{D}
\phi(t)|^2\ dx.
\end{split}
\end{equation}
This Hamiltonian is clearly non-negative.  In the Coulomb gauge
$\nabla_x \cdot \underline{A} = 0$ the Hamiltonian turns out to be
roughly equivalent to $\| \Phi[t] \|_{[H^1]}^2$; see Section
\ref{hamil-sec}.

Theorem \ref{h1-gwp} includes also  local well-posedness in $[H^s]$
in the range $s \geq 1$.  This condition for local well-posedness
has been lowered to $s > 3/4$ by Cuccagna \cite{cuccagna} (see also
Theorem \ref{hs-lwp} below), and down to the near-optimal value of
$s > 1/2$ in \cite{machedon} (see also a similar result for a model
problem in \cite{kl-mac:optimal}, and \cite{kl-tar:ym},
\cite{selberg:mkg} for analogous results in higher dimensions). Our
results here shall rely primarily on the local theory in
\cite{cuccagna} and not on the more sophisticated techniques in
\cite{machedon}.

\subsection{Main result}

The purpose of this paper is to consider the corresponding question
of global well-posedness below the energy class.  Our main result is

\begin{theorem}\label{main} Theorem \ref{h1-gwp} also holds in the range $1 > s > \sqrt{3}/2$.
\end{theorem}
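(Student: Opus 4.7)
The proof will follow the \emph{$I$-method} (method of almost conservation laws) of Colliander-Keel-Staffilani-Takaoka-Tao: introduce a smoothing Fourier multiplier $I = I_N$ which is the identity on frequencies $|\xi|\le N$ and roughly $(N/|\xi|)^{1-s}$ on $|\xi|\ge 2N$, so that $I:[H^s]\to [H^1]$ with operator norm $O(N^{1-s})$. The plan is to work with the modified Hamiltonian $\H[I\Phi[t]]$, which is finite for $[H^s]$ data and (up to the low-frequency subtleties flagged in the introduction) controls $\|I\Phi[t]\|_{[H^1]}^2$, and then to show that although $\H[I\Phi[t]]$ is no longer exactly conserved, it grows only polynomially in time.

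The first ingredient is a robust local well-posedness theorem at regularity $[H^s]$ for $s>3/4$, supplied by the work of Cuccagna (recorded as Theorem \ref{hs-lwp} later in the paper), which produces a solution on a time interval of length controlled by $\|\Phi[0]\|_{[H^s]}$. On such an interval I would prove an \emph{almost conservation law} of the schematic form
\begin{equation*}
\bigl|\H[I\Phi[t_1]] - \H[I\Phi[t_0]]\bigr| \lesssim N^{-\beta}\, P\!\left(\H[I\Phi[t_0]]\right)
\end{equation*}
for some exponent $\beta=\beta(s)>0$ and some polynomial $P$. Once this is available, a standard bootstrap combines it with the scaling \eqref{mkg-scaling} and the initial bound $\H[I\Phi[0]]=O(N^{2(1-s)})$ to extend solutions to any time $T$ by choosing $N=N(T,\|\Phi[0]\|_{[H^s]})$ large enough; tracking the $N$-dependence yields the polynomial bound \eqref{poly}. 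The critical exponent $s>\sqrt{3}/2$ should emerge as the balance point between the loss $N^{2(1-s)}$ in the initial energy and the gain $N^{-\beta}$ per unit time from the conservation law.

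The heart of the argument is the proof of the almost conservation law. I would differentiate $\H[I\Phi]$ in time using \eqref{a0-eq}--\eqref{bphi-eq}; because $I$ does not commute with the nonlinearity, the resulting error is a sum of commutator integrals, which I would expand dyadically via a Littlewood-Paley decomposition and estimate block by block. Three tools are needed: (i) the null-form structure of the quadratic interactions via $Q_{jk}$, as in Klainerman-Machedon, which compensates for the derivative losses in the high-high-to-low interactions of $\Box\underline{A}$ and $\Box\phi$; (ii) a suite of Strichartz and $X^{s,b}$ wave estimates delivering the spacetime integrability of the free evolution at regularity $s$; and (iii) commutator/fractional-Leibniz bounds that extract the saving $N^{-\beta}$ by exploiting that $I$ is nearly constant on each dyadic block. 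The cubic terms $\O(\Phi\Phi\Phi)$ from \eqref{bad-caricature} should be handled by Strichartz and Sobolev embedding alone, since they are subcritical, while the truly delicate contribution is the quadratic one.

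The principal obstacle, as flagged in the abstract and in the remark following \eqref{hsbracket}, is the poor low-frequency behavior of (MKG-CG) in the Coulomb gauge: since $A_0$ is recovered from the elliptic equation \eqref{a0-eq} and may decay only like $1/|x|$, there is no useful $L^2_x$ control on it, and the $[H^s]$ norm at low frequencies is strictly weaker than its homogeneous analogue $\dot H^s_x$. Because $I$ acts as the identity at low frequencies, it offers no smoothing there, so the usual dyadic book-keeping of the $I$-method (built around homogeneous norms) breaks down for the low-frequency parts of $A_0$ and $\underline{A}$. I expect that overcoming this will require splitting each field into low- and high-frequency pieces, treating the low-frequency part of $A_0$ directly via \eqref{a0-eq} and Sobolev embedding into $L^p_x$ with $p>2$ (as hinted in the footnote about $\partial_t A_0\in L^2_x$), and carefully distinguishing $\langle\nabla\rangle^s$ from $|\nabla_x|^s$ throughout the commutator estimates. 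This is where most of the technical work in the proof will be concentrated, and where the gap between $\sqrt{3}/2$ and the scaling threshold $1/2$ is ultimately paid.
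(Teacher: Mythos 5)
Your proposal follows essentially the same route as the paper: the $I$-method built on Cuccagna-type local well-posedness, a rescaling via \eqref{mkg-scaling} to make $\H[I\Phi^{(\lambda)}[0]]$ small, an almost conservation law proved by differentiating $\H[I\Phi]$ and estimating the resulting commutator terms with null-form, Strichartz/$H^{s,b}$ and commutator bounds, plus a separate low-frequency treatment of $A_0$ and $\underline{A}$ in Lebesgue spaces such as $L^6_x$, $L^3_x$; the balance you describe is exactly how $s>\sqrt{3}/2$ emerges in the paper (from $\lambda^{1/2-s}N^{1-s}\ll 1$ against $\lambda T\ll N^{(s-1/2)-}$). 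Your text is of course only a strategy outline—the bulk of the paper consists in carrying out the modified local theory in $I^{-1}H^{s,b}$-type spaces and the estimates for the commutators $[I,\N_0],\dots,[I,\N_3]$—but the approach coincides with the paper's.
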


\begin{remark} This result was announced previously by the first and third authors in the smaller range $1 > s > 7/8$.  Prior to the finalization of this paper, we had announced this result for the improved range $s > 5/6$, but some of
the estimates used in that argument turned out to be incorrect.
\end{remark}

These results can be compared with the theory for the nonlinear wave
equation
\begin{equation*}\tag{NLW}
\Box \varphi = |\varphi|^2 \varphi  
\end{equation*}
(compare with \eqref{bad-caricature}).  This equation has the same
scaling \eqref{mkg-scaling} as (MKG-CG) but is simpler due to the
lack of derivatives in the nonlinearity.  For this equation one has
local well-posedness all the way down to the critical regularity $s
\geq 1/2$ (though at $s=1/2$ the time of existence depends on the
profile of the data, and not just its norm), and global
well-posedness for small $\dot H^{1/2}$ data, see e.g.
\cite{lindbladsogge, kapitanski, sogge:wave}.  For large data global
well-posedness is known for $s \geq \frac{13}{18}$ \cite{roy1},
extending previous work that had gotten to $s > \frac{3}{4}$
\cite{kpv:gwp}, \cite{gallagher}, \cite{chemin}.  Since the local
well-posedness theory for (MKG-CG) has been improved in
\cite{machedon} to nearly match that for (NLW), one might therefore
hope to improve the results here to, say $s > 3/4$, although this is
by no means automatic and we will not do so here given that the
argument is already quite lengthy.

Our proof proceeds by the method of almost conservation laws,
sometimes called the ``I-method",  introduced in \cite{keel:wavemap}
and in the earliest versions of the present work (see e.g.
\cite{ckstt:2a} for use of the method in a more straightforward
context than the present one).   The basic idea is to introduce a
special smoothing operator $I=I_N$ of order $1-s$ depending on a
large parameter $N$, and then consider the quantity $\H[I\Phi[t]]$,
which turns out to be finite (but large) for $\Phi[t] \in [H^s]$.
When $s=1$ then $I$ is the identity and $\H[I\Phi[t]]$ is exactly
conserved.  When $s < 1$ we do not have exact conservation, but we
will be able to show (using a modified local well-posedness theory)
that $\H[I\Phi[t]]$ is ``almost conserved'' in that its derivative
is very small (indeed, it will be bounded by a negative power of
$N$).  This will allow us to control the solution for long times (a
positive power of $N$).  Letting $N$ go to infinity we obtain the
result.  Unfortunately the operator $I$ maps $H^s_x$ to $H^1_x$ with
a large operator norm (like $O(N^{1-s})$), and when $s$ is large this
loss of $N^{1-s}$ can overwhelm the almost-conservation of
$\H[I\Phi[t]]$, which is why we have the rather artificial
restriction $s>\sqrt{3}/2$. Subsequent refinements of the
``I-method'' in \cite{ckstt:1}-\cite{ckstt:4} suggest that this
restriction can be lowered by adding additional ``damping
correction'' terms to $\H[I\Phi[t]]$ to reduce the size of the
derivative, but we will not pursue these matters here. Certainly we
do not expect $\sqrt{3}/2$ to be the sharp threshold of global
well-posedness.  (For instance, many, though unfortunately not all,
of the components of the argument are also valid in the regime $s >
5/6$, and some parts are even valid in the range $s>3/4$.)

These results are similar to those of the earlier work of
Bourgain\cite{borg:book} and later authors in obtaining global
well-posedness for nonlinear wave, Schr\"odinger and KdV-type
equations below the energy norm.  However the methods are slightly
different; instead of using a smoothing operator $I$, the method of
Bourgain relies on truncating the solution $u$ at frequency $N$ into
a low frequency component and a high frequency component, and
controlling the evolution of the two components separately (except
for some periodic adjustments at regular intervals).  This approach
gives much better control on the solution (for instance, the method
shows the high frequencies behave almost like the linear flow), but
requires ``extra smoothing estimates'' on the nonlinear component of
the solution, in particular placing that component in the energy
class even when the solution is in a rougher Sobolev space.  For the
equation (MKG-CG)  these extra smoothing estimates are not available
for the worst term in the nonlinearity, namely $\P(\underline{A})
\cdot \nabla_x \phi$, mainly because of the derivative $\nabla_x$;
indeed in the appendix we will give an argument that shows that this
extra smoothing fails for $[H^s]$ solutions for any $s < 1$.
Fortunately, the I-method can circumvent this problem by using
commutator estimates as a substitute for extra smoothing estimates.
See \cite[\S 3.9]{tao} for some further discussion.

\subsection{Organisation of the paper}  After setting some general notation in
Section \ref{notation-sec}, we describe some useful elliptic
estimates for $A_0$ in Section \ref{elliptic-sec}, we are able to
investigate the Hamiltonian in Section \ref{hamil-sec}, and show
that this Hamiltonian largely controls the $[H^1]$ norm of $\Phi$.
This allows us to begin the proof of Theorem \ref{main} in Section
\ref{global-sec}, where we reduce matters to showing a standard
local well-posedness result
 (Theorem \ref{hs-lwp}) as well as an almost conservation law
 (Proposition \ref{main-prop}) for the modified Hamiltonian $\H[I \Phi]$.
  To achieve these tasks, we write the (MKG-CG) equation
in Section \ref{caricature-sec} into a more schematic form which
will be more convenient to manipulate.  After recalling some
$H^{s,b}$ theory in Sections \ref{function-sec}-\ref{bil-sec}, we
are then quickly able to establish the local well-posedness result
in Section \ref{local-sec}.  To prove the almost conservation law,
we will then need  a modified local well-posedness result
(Proposition \ref{i-local}), established in Section \ref{mod-sec}.
We then differentiate the Hamiltonian in Section
\ref{hamildiff-sec}, leading to a number of commutator terms we need
to control.  The terms arising from cubic nonlinearities are
relatively easy and are dealt with in Section \ref{cubic-sec}.  The
terms arising from bilinear nonlinearities are rather complicated
and we shall deal with them \emph{en masse} using some specialized
notation in Section \ref{freq-sec}, which allows us to deal with the
non-null form bilinear terms in Section \ref{n1-sec} and the null
form terms in Section \ref{n0-sec}.

Finally, in the appendix we demonstrate why the system (MKG-CG) does
not have the smoothing effect necessary for Bourgain's Fourier
truncation method \cite{borg:book} to be applicable.

\subsection{Acknowledgements}

The first named author was supported by the NSF, and the Sloan and
McKnight Foundations. The third author is supported by a grant from
the MacArthur Foundation, by NSF grant DMS-0649473, and by the NSF
Waterman award.  We thank the referee for a careful reading of the paper.

\section{Notation}\label{notation-sec}

We fix an exponent $s$, which will usually be in the range
$\sqrt{3}/2 < s < 1$.  We use $C$ to denote various large constants
depending on $s$ and on some other quantities which we will indicate
in the sequel. We use $A \lesssim B$ (or $A = O(B)$) to denote the
estimate $A \leq CB$ and $A \ll B$ to denote the estimate $A \leq
C^{-1} B$. We use $a+$ and $a-$ to denote expressions of the form
$a+\eps$ and $a-\eps$, where $0 < \eps = \eps(s) \ll 1$ denotes a
small number; the implicit constants $C$ referred to above are
allowed to depend on $\eps$.  Note that $a$ may be negative; thus for instance $-\frac{1}{2}+ = -(\frac{1}{2}-)$ is a number slightly larger than $-\frac{1}{2}$.

Given any Banach space $X$ and any injective linear operator $T: X \to Y$,
let $TX$ denote the Banach space $\{ Tu: u \in X \}$ with norm
$$ \| u \|_{TX} := \|T^{-1} u \|_X.$$
If $X$ is a Banach space, we shall use $\Ball(X)$ to denote the unit
ball $\Ball(X) := \{ f \in X: \|f\|_X \leq 1 \}.$  Thus
$r\Ball(X)=\Ball(rX)$ is the ball of radius $r$, $$\Ball(X) +
\Ball(Y) = \Ball(X+Y) = \{ f + g: f \in \Ball(X), g \in \Ball(Y)
\}$$ is the unit ball of the Banach space $X+Y$, $T\Ball(X) =
\Ball(TX)$ is the unit ball of $TX$, etc. We use the embedding
notation
$$ X \subseteq Y$$
to denote the estimate $\Ball(X) \subseteq C \Ball(Y)$, or
equivalently that $f \in Y$ and $\|f\|_Y \lesssim \|f\|_X$ for all
$f \in X$.  We will need this rather unusual notation due to our
reliance on compound spaces $X+Y$ in some of our later arguments.

We use $\hat \phi$ to denote the spatial Fourier transform
$$ \hat \phi(t,\xi) := \int_{\R^3} e^{-i x \cdot \xi} \phi(t,x)\ dx$$
and define fractional derivative operators in the usual manner:
$$ \widehat{|\nabla_x|^s \phi}(t,\xi) := |\xi|^s \hat \phi(t,\xi); \quad
\widehat{\langle\nabla_x\rangle^s \phi}(t,\xi) :=
\langle\xi\rangle^s \hat \phi(t,\xi).$$

We recall the Sobolev multiplication laws on $\R^3$.  Specifically,
we have
\begin{equation}\label{sobolev-mult}
\| \phi \psi \|_{H^s_x} \lesssim \| \phi \|_{H^{s_1}_x} \| \psi
\|_{H^{s_2}_x}
\end{equation}
whenever $s_1 + s_2 \geq 0$, $s \leq \min(s_1,s_2)$ and $s < s_1 +
s_2 - \frac{3}{2}$. See e.g. \cite{tao:xsb}.  Of course, the
implicit constant here depends on $s_1,s_2,s$. A special case of
this inequality is

\begin{lemma}\label{mult}  If $s > 3/4$, then
$\| u v \|_{\dot H^{-1}_x} \lesssim \| u \|_{H^s_x} \| v
\|_{H^{s-1}_x}$.
\end{lemma}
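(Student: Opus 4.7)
The plan is to split the $\dot H^{-1}_x$ norm of $uv$ into a high-frequency piece, controlled by the Sobolev multiplication law \eqref{sobolev-mult}, and a low-frequency piece, treated by a direct pointwise Fourier estimate. For the high-frequency piece I would apply \eqref{sobolev-mult} with $s_1 = s$, $s_2 = s-1$, and target regularity $\sigma = -1$; the three hypotheses $s_1 + s_2 = 2s - 1 \geq 0$, $\sigma \leq \min(s_1,s_2) = s - 1$, and $\sigma < s_1 + s_2 - \tfrac{3}{2} = 2s - \tfrac{5}{2}$ all hold whenever $s > 3/4$, the last being the binding constraint. This yields $\|uv\|_{H^{-1}_x} \lesssim \|u\|_{H^s_x}\|v\|_{H^{s-1}_x}$, and since $|\xi|^{-1} \sim \langle \xi\rangle^{-1}$ for $|\xi| \geq 1$ it already controls the contribution to $\|uv\|_{\dot H^{-1}_x}$ from frequencies $|\xi| \geq 1$.

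It remains to handle the low-frequency piece $\|P_{\leq 1}(uv)\|_{\dot H^{-1}_x}$. For this I would establish the uniform pointwise bound
\[
|\widehat{uv}(\xi)| \;\lesssim\; \|u\|_{H^s_x}\|v\|_{H^{s-1}_x} \qquad \text{for } |\xi| \leq 1,
\]
which closes the estimate because $\int_{|\xi|\leq 1} |\xi|^{-2}\,d\xi < \infty$ in three dimensions. To prove the pointwise bound, write $\widehat{uv}(\xi) = (\hat u \ast \hat v)(\xi)$, insert the identity $1 = \langle \xi - \eta\rangle^{s}\langle \xi - \eta\rangle^{-s}$ inside the convolution integral, and apply Cauchy-Schwarz. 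One factor evaluates to $\|u\|_{H^s_x}$ after the substitution $\zeta = \xi - \eta$, while the other is $\bigl(\int \langle \xi - \eta\rangle^{-2s}|\hat v(\eta)|^2\,d\eta\bigr)^{1/2}$, which is comparable to $\|v\|_{H^{-s}_x}$ once one observes that $\langle \xi - \eta\rangle \sim \langle \eta \rangle$ whenever $|\xi| \leq 1$; finally $\|v\|_{H^{-s}_x} \leq \|v\|_{H^{s-1}_x}$ by monotonicity, using $-s \leq s - 1$ (i.e., $s \geq 1/2$, which is implied by $s > 3/4$).

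The main (rather mild) obstacle is precisely the discrepancy between $\dot H^{-1}_x$ and $H^{-1}_x$ at low frequencies: a bare application of \eqref{sobolev-mult} produces only the inhomogeneous norm. The pointwise Fourier argument above resolves this, but it is dimension-specific, relying on the local integrability of $|\xi|^{-2}$ near the origin in $\R^3$; in a lower-dimensional setting this portion of the argument would need to be reworked.
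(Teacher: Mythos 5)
Your proof is correct, but it proceeds along a genuinely different route from the paper's. You decompose the \emph{output} $uv$ in frequency: for output frequencies $|\xi|\geq 1$ the homogeneous and inhomogeneous weights are comparable and \eqref{sobolev-mult} with $(s_1,s_2,\sigma)=(s,s-1,-1)$ applies (your verification of the three hypotheses, with $s>3/4$ as the binding one, is right), while for $|\xi|\leq 1$ you prove a uniform pointwise bound $|\widehat{uv}(\xi)|\lesssim \|u\|_{H^s_x}\|v\|_{H^{s-1}_x}$ via Cauchy--Schwarz on the convolution $\hat u * \hat v$ (using $\langle \xi-\eta\rangle\sim\langle\eta\rangle$ for $|\xi|\leq 1$ and $\|v\|_{H^{-s}_x}\leq\|v\|_{H^{s-1}_x}$), and then integrate against the locally integrable weight $|\xi|^{-2}$ in $\R^3$. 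The paper instead dualizes first, reducing the claim to $\|uw\|_{H^{1-s}_x}\lesssim\|u\|_{H^s_x}\|w\|_{\dot H^1_x}$, and splits the \emph{dual} function $w$ in frequency: the high-frequency part of $w$ is handled by \eqref{sobolev-mult} (since $\dot H^1_x\sim H^1_x$ there), and the low-frequency part by the fractional Leibnitz rule together with H\"older ($L^3_x\cdot L^6_x$) and Sobolev embedding. Your route is more elementary and self-contained at the problematic low frequencies — the explicit $L^\infty_\xi$ bound makes completely transparent that the only issue with the homogeneous norm is the weight $|\xi|^{-2}$ near the origin, at the cost of being dimension-specific, as you note; the paper's duality argument stays entirely within the Sobolev product law and embedding framework used throughout the paper, which is why it transfers more directly to the nearby product estimates proved later (e.g.\ in the proof of Proposition \ref{bilinear}). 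Strictly speaking your pointwise convolution manipulation should be carried out for Schwartz data and extended by density, but that is routine and the paper's own proof makes the same kind of reduction.
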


\begin{proof}
By duality it suffices to show that $ \| u w \|_{H^{1-s}} \lesssim
\| u \|_{H^s_x} \| w \|_{\dot H^1_x}$. If $\hat w$ is supported in
the region $|\xi| \gtrsim 1$ then $\dot H^1_x$ is equivalent to
$H^1_x$ and the claim follows from \eqref{sobolev-mult} (since $s >
3/4$).  Thus we assume that $\hat w$ is supported on the region
$|\xi| \ll 1$.

Taking the fractional derivative $\langle \nabla_x \rangle^{1-s}$
and applying the fractional Leibnitz rule (taking Fourier transforms
and assuming $\hat u$, $\hat w$ to be real and non-negative if
desired) we reduce to showing that
$$ \| u \langle \nabla_x \rangle^{1-s} w \|_{L^2_x} \lesssim \| u \|_{H^s_x} \| w \|_{\dot H^1_x}$$
and
$$ \| (\langle \nabla_x \rangle^{1-s} u) w \|_{L^2_x} \lesssim \| u \|_{H^s_x} \| w \|_{\dot H^1_x}$$
Since $w$ only has low frequencies, the operator $\langle \nabla_x
\rangle^{1-s}$ is harmless when applied to $w$, and it will suffice
to prove the latter inequality.  But we can use H\"older to measure
$\langle \nabla_x \rangle^{1-s} u$ in $L^3_x$ and $w$ in $L^6_x$,
and the claim follows from Sobolev and the assumption $3/4 < s \leq
1$.
\end{proof}

We shall also use the Sobolev embeddings $\dot H^{1/2}_x \subseteq
L^3_x$, $\dot H^{3/4}_x \subseteq L^4_x$, $\dot H^1_x \subseteq
L^6_x$, and $\dot H^{3/2-}_x \cap \dot H^{3/2+}_x \subseteq
L^\infty_x$ extremely frequently in the sequel.  Of course these
homogeneous Sobolev embeddings imply various inhomogeneous Sobolev
embeddings, e.g. that $H^s_x \subseteq L^3_x$ whenever $s \geq 1/2$.

As mentioned in the introduction, one of the technical difficulties
with (MKG-CG) is that it is not always possible to control the low
frequency portion\footnote{In the introduction, it was only $A_0$
which had difficulty getting into the $L^2$ norm, and the $[H^s]$
norm allowed us to control $\underline \Phi$ in $L^2$.  However, for
the global existence argument we shall need to rescale the fields
$(A_0, \underline \Phi)$ by a large dilation factor $\lambda$.  This
rescaling is needed to make the (subcritical) Hamiltonian small, but
it also makes the (supercritical) $L^2_x$ norm large.  While it is
possible to continue using the $L^2_x$ norm, it leads to inferior
numerology (in particular, the range of possible $s$ is greatly
reduced) so we shall avoid doing so, using other (non-supercritical)
Lebesgue spaces such as $L^6_x$ as substitutes.}  of the fields
$(A_0, \underline{\Phi})$ satisfactorily in $L^2_x$ norm.  To get
around this we shall estimate the low frequencies in other $L^p_x$
norms.  In this section we develop some of the theory of
frequency-localized Lebesgue spaces.

\begin{definition}
If $1 < p \leq \infty$ and $R > 0$, we define the space $\L^p_R$ to be
the subspace of $L^p(\R^3)$ consisting of those functions whose
Fourier support is contained in the ball $|\xi| \leq R$.  (We keep
the $L^p$ norm structure on this subspace $\L^p_R$.)
\end{definition}

We will use very specific instances of these spaces such as
$\L^6_1$, $\L^3_2$, and $\L^\infty_{10}$.

Observe that if $R$ is bounded, then derivatives are bounded on
$L^p_R$:
\begin{equation}\label{nabla}
\nabla_x \L^p_R \subseteq \L^p_R.
\end{equation}
This is clear since $\nabla$ is equivalent to a standard symbol of
order 0 on frequencies $|\xi| \leq R$.  From this and Sobolev
embedding (or Bernstein's inequality) we see that
\begin{equation}\label{lpr-bernstein}
\L^p_R \subseteq \L^q_{R'}
\end{equation}
whenever $p \leq q$ and $R \leq R'$.

The functions in $\L^p_R$ are thus very smooth (in fact, they are
analytic).  The $p$ exponent thus does not measure regularity, but
instead controls the decay at infinity.

From H\"older's inequality we have
\begin{equation}\label{lpr-holder}
\L^p_R \cdot \L^q_{R'} \subseteq \L^{r}_{R +R'}
\end{equation}
whenever $1/r = 1/p + 1/q$, since the frequency support of a product
is contained in the sum of the frequency supports of the factors.

In particular, if $R$ is bounded, then functions in $\L^p_R$ are
bounded, and so
$$ \L^p_R \cdot L^2 \subseteq L^2.$$
From \eqref{nabla} and the Leibnitz rule we thus have
\begin{equation}\label{lpr-mult}
\L^p_R \cdot H^s_x \subseteq H^s_x
\end{equation}
for all integer $s \geq 0$.  By duality this also holds for integer
$s \leq 0$.  By complex interpolation this thus holds for all real
$s$.

Finally, we prove an ``energy estimate'' for the $\L^p_R$ spaces.
Let us restrict spacetime to a slab $[t_0-\delta, t_0+\delta] \times
\R^3$ for some $0 < \delta \ll 1$ and some time $t_0$.  Suppose $u$
is such that $u[t_0] \in \L^p_R$ and $\Box u \in L^1_t \L^p_R$, with
$R$ bounded.  Then we have $\nabla_{x,t} u \in C^0_t \L^p_R$ with
\begin{equation}\label{lpr-energy}
\| \nabla_{x,t} u \|_{C^0_t \L^p_R} \lesssim \| u[t_0] \|_{\L^p_R} +
\| \Box u \|_{L^1_t \L^p_R}.
\end{equation}
Indeed, this follows from the Duhamel formula
$$ u(t) = \cos((t-t_0) \sqrt{-\Delta}) u(t_0) + \frac{\sin((t-t_0) \sqrt{-\Delta})}{\sqrt{-\Delta}} u_t(t_0) + \int_{t_0}^t
\frac{\sin((t'-t_0) \sqrt{-\Delta})}{\sqrt{-\Delta}} \Box u(t')\
dt'$$ and the fact that $\nabla_x$, $\cos((t-t_0) \sqrt{-\Delta})$
and $\frac{\sin((t-t_0) \sqrt{-\Delta})}{\sqrt{-\Delta}}$ are
equivalent to symbols of order 0 for frequencies $\leq R$ and times
$t \in [t_0-\delta, t_0+\delta]$.

\section{The elliptic theory of $A_0$}\label{elliptic-sec}

In this section we develop some elliptic theory for how the
connection component $A_0$ depends on $\phi$ and $A$.  We shall
establish a smoothing effect that allows us to place $A_0[t]$ in
$\dot H^1_x \times L^2_x$ even if $\phi[t]$ and $A[t]$ are merely in
$H^s_x \times H^{s-1}_x$ for some $3/4 < s \leq 1$.

The equations \eqref{a0-eq}, \eqref{a0t-eq} for a fixed time $t$ can
be rewritten as
\begin{equation}\label{a0-alt}
\begin{split}
(-\Delta + |\phi|^2) A_0 &= \Im(\phi \overline{\phi_t}) \\
\nabla_x A_{0,t} &= - (1-\P) \Im (\phi \overline{\nabla_x \phi}) +
(1-\P) (\underline{A} |\phi|^2).
\end{split}
\end{equation}
We view \eqref{a0-alt} as a linear elliptic system for two unknown
fields $A_0, A_{0,t}$ in terms of data $\phi, \phi_t, A$; the $t$
subscript here should be viewed as simply a label, thus $A_{0,t}$
and $\phi_t$ are not being interpreted here as the time derivatives
of $A_0$ or $\phi$.

Our main result here is as follows.

\begin{proposition}[$A_0$ estimates]\label{a0-est}  Let $3/4 < s \leq 1$, let $\phi, \underline{A} \in H^s_x$, and $\phi_t \in H^{s-1}_x$ (we do not assume here that $\phi, \underline{A}, \phi_t$ solve (MKG-CG) or obey any compatibility conditions).  Then there exists unique $A_0 \in H^1_x$ and $A_{0,t} \in L^2_x$ obeying \eqref{a0-alt}, and we also have the bounds
\begin{equation}\label{cz}
\| A_0 \|_{\dot H^1_x} \lesssim \| \phi \overline{\phi_t} \|_{\dot
H^{-1}_x}
\end{equation}
and
\begin{equation}\label{nab-a0}
\| A_0 \|_{\dot H^1_x}, \| A_{0,t} \|_{L^2_x} \lesssim \sum_{j=2}^3
(\| \phi \|_{H^s_x} + \| \underline{A} \|_{H^s_x} + \| \phi_t
\|_{H^{s-1}_x})^j.
\end{equation}
Furthermore, if $\phi', \underline{A}' \in H^s_x$ and
$\underline{A}'_t \in H^{s-1}_x$, and $A'_0, A'_{0,t}$ are the
associated solutions to \eqref{a0-alt}, then we have the local
Lipschitz bound
\begin{equation}\label{a0-diff}
\| A_0 - A'_0 \|_{\dot H^1_x}, \| A_{0,t} - A'_{0,t} \|_{L^2_x}
\lesssim M^5 [ \| \phi - \phi' \|_{H^s_x} + \|\underline{A} -
\underline{A}'\|_{H^s_x} + \| \phi_t - \phi'_t \|_{H^{s-1}_x} ]
\end{equation}
where
\begin{equation}\label{m-def}
M := 1 + \| \phi \|_{H^s_x} + \| \underline{A} \|_{H^s_x} + \|
\phi_t \|_{H^{s-1}_x} + \| \phi' \|_{H^s_x} + \| \underline{A}'
\|_{H^s_x} + \| \phi'_t \|_{H^{s-1}_x}.
\end{equation}
\end{proposition}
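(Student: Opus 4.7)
The plan is to treat the two equations in \eqref{a0-alt} separately, using the positivity of the mass term $|\phi|^2$ for the first and a direct Riesz-type inversion for the second. For the first equation I would view $L := -\Delta + |\phi|^2$ as a positive self-adjoint operator and apply Lax--Milgram on $\dot H^1_x$ with the bilinear form $B(u,v) := \int \nabla u \cdot \nabla \bar v + |\phi|^2 u \bar v$. Coercivity is automatic from $|\phi|^2 \geq 0$; continuity of the mass term follows from $\|\phi u\|_{L^2_x} \lesssim \|\phi\|_{L^3_x} \|u\|_{L^6_x} \lesssim \|\phi\|_{H^s_x} \|u\|_{\dot H^1_x}$ (Sobolev, valid since $s > 1/2$); and the right-hand side $\Im(\phi \overline{\phi_t})$ lies in $\dot H^{-1}_x$ by Lemma \ref{mult} (which is where the hypothesis $s > 3/4$ first enters). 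Pairing the equation with $A_0$ and discarding the nonnegative term $\int |\phi|^2 |A_0|^2$ yields \eqref{cz} immediately.

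For $A_{0,t}$, I would observe that $(1-\P)$ is the Helmholtz projection onto gradients, so the second equation in \eqref{a0-alt} can be inverted in closed form as
\[
A_{0,t} = (-\Delta)^{-1} \nabla_x \cdot \bigl[ \Im(\phi \overline{\nabla_x \phi}) - \underline A |\phi|^2 \bigr].
\]
Because $(-\Delta)^{-1} \nabla_x$ is the composition of $|\nabla_x|^{-1}$ with a Riesz transform, $\|A_{0,t}\|_{L^2_x}$ is controlled by the $\dot H^{-1}_x$ norm of the bracket. Lemma \ref{mult} handles the bilinear piece, $\|\phi \overline{\nabla_x \phi}\|_{\dot H^{-1}_x} \lesssim \|\phi\|_{H^s_x}^2$. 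For the cubic piece, the dual Sobolev embedding $L^{6/5}_x \subseteq \dot H^{-1}_x$ together with Hölder and $H^s_x \subseteq L^{18/5}_x$ (valid since $s > 2/3$, in particular $s > 3/4$) gives $\|\underline A |\phi|^2\|_{\dot H^{-1}_x} \lesssim \|\underline A\|_{H^s_x} \|\phi\|_{H^s_x}^2$. Combining with the $A_0$ bound produces \eqref{nab-a0} with the advertised quadratic-plus-cubic profile.

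For the Lipschitz estimate \eqref{a0-diff} I would subtract the two copies of \eqref{a0-alt} to obtain
\[
(-\Delta + |\phi|^2)(A_0 - A'_0) = \Im\bigl(\phi \overline{\phi_t} - \phi' \overline{\phi'_t}\bigr) - (|\phi|^2 - |\phi'|^2) A'_0
\]
and again pair against $A_0 - A'_0$, discarding the positive mass term. Telescoping the first inhomogeneity as $(\phi-\phi')\overline{\phi_t} + \phi'\overline{(\phi_t-\phi'_t)}$ and applying Lemma \ref{mult} contributes an $M$-prefactor on the differences. The genuinely new term is $(|\phi|^2 - |\phi'|^2) A'_0$: writing $|\phi|^2 - |\phi'|^2 = (\phi-\phi')\bar\phi + \phi'\overline{(\phi-\phi')}$, Hölder with $A'_0$ and $A_0 - A'_0$ placed in $L^6_x$ and the two $H^s$ factors placed in $L^3_x$ closes the estimate, and the a priori bound $\|A'_0\|_{\dot H^1_x} \lesssim M^3$ from \eqref{nab-a0} absorbs into the $M^5$-prefactor (with room to spare). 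The companion bound on $A_{0,t} - A'_{0,t}$ is handled by subtracting the two closed-form expressions for $A_{0,t}$ and expanding each multilinear term with one difference factor, estimated exactly as in the \emph{a priori} step.

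The main technical obstacle is precisely the term $(|\phi|^2 - |\phi'|^2) A'_0$ in the Lipschitz step: since $A'_0$ lies only in $\dot H^1_x \subseteq L^6_x$ (not in $L^2_x$, as emphasized in the introduction), the Hölder triple product must be arranged so that all $\phi,\phi'$ factors go into $L^3_x$, which forces the full strength of $s > 1/2$ Sobolev, while the $\dot H^{-1}$ estimates elsewhere consume the threshold $s > 3/4$ via Lemma \ref{mult}.
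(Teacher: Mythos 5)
Your proposal is correct, and for most of the proposition it coincides with the paper's argument in slightly different language: constructing $A_0$ by Lax--Milgram for the coercive form $B(u,v)=\int\nabla u\cdot\nabla\bar v+|\phi|^2u\bar v$ is the same as the paper's construction of $A_0$ as the minimizer of the convex functional \eqref{a0-minimize}; the bound \eqref{cz}, obtained by testing against $A_0$ and discarding $\int|\phi|^2|A_0|^2$, is the same use of positivity as the paper's observation that $\tilde L_{\phi,\phi_t}(A_0)\le 0$; and the treatment of $A_{0,t}$ (Hodge/Riesz inversion of $(1-\P)$, then Lemma \ref{mult}, Sobolev and H\"older) is identical. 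Note only that Lax--Milgram naturally lives on $\dot H^1_x$, which is consistent with the paper's remark that $A_0$ need not lie in $L^2_x$; the $H^1_x$ formulation is invoked in the paper only after the density reduction to smooth, rapidly decreasing data. The genuine divergence is in the Lipschitz bound \eqref{a0-diff}: you subtract the two elliptic equations, test with $h=A_0-A_0'$, drop the nonnegative term $\int|\phi|^2|h|^2$, and estimate the transferred potential term $(|\phi|^2-|\phi'|^2)A_0'$ by H\"older in $L^{3/2}\cdot L^6\cdot L^6$, whereas the paper compares values of the renormalized functional $\tilde L$ at $A_0$ and $A_0'$ using the minimizing property of both, reducing matters to \eqref{l-claim}; the H\"older/Sobolev estimates are the same in both routes, and both exploit the same positivity to avoid any smallness assumption on $\phi$. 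What your route buys is that it yields the difference bound linearly in the data difference directly (with prefactor $M^4$, comfortably within the stated $M^5$), while the paper's comparison, as written, controls $\|h\|_{\dot H^1_x}^2$ by $M^5$ times the first power of the data difference, i.e.\ a H\"older-$\frac12$ modulus; upgrading that to the stated Lipschitz form requires also using the vanishing first variation of $\tilde L$ at both minimizers, which after expansion is precisely your energy identity, so your formulation is arguably the cleaner one for this step.
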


\begin{remark}
If $\phi$ is suitably small (e.g. if $\| \phi \|_{L^3_x} \ll 1$)
then we can iterate away the linear term $A_0 |\phi|^2$, and
controlling $A_0$ in terms of $\phi$ is straightforward.  However we
do not assume any smallness condition on $\phi$, and so we must
proceed with some care.  In particular, we must augment perturbation
theory with some variational methods\footnote{Alternatively, one
could localize in space and use standard elliptic regularity theory.
However this requires the usage of local Coulomb gauges, which have
their own attendant technicalities, see e.g.
\cite{klainerman:yangmills}, \cite{uhlenbeck}.}.
\end{remark}

\begin{proof}
The Schr\"odinger operator $-\Delta + |\phi|^2$ maps $H^1_x$ to
$H^{-1}_x$ (using \eqref{sobolev-mult}), and is clearly positive
definite.  From Lemma \ref{mult} we have $\Im(\phi
\overline{\phi_t}) \in H^{-1}_x$.  From \eqref{a0-alt} we thus
conclude that $A_0$ is unique as claimed.  The uniqueness for
$A_{0,t}$ is obvious.

To prove the remaining claims in the proposition, it suffices by the
usual density arguments to verify the case when $\phi$, $\phi_t$,
$\underline{A}$ are smooth and rapidly decreasing in space, which we
shall now assume throughout.

From \eqref{a0-alt} and standard Euler-Lagrange theory, we see that
$A_0$ can be now be constructed as the unique minimizer in $H^1_x$
of the convex functional\footnote{The expression \eqref{a0-minimize}
can also be interpreted as the component of the Hamiltonian
\eqref{hamil-def} which depends on $A_0$.}
\begin{equation}\label{a0-minimize}
L_{\phi,\phi_t}(A_0) := \frac{1}{2} \int_{\R^3} |\nabla_x A_0|^2 +
|\phi_t + i A_0 \phi|^2\ dx.
\end{equation}
This gives existence of $A_0$.  The existence of $A_{0,t}$ is clear
from Hodge theory, since the right-hand side of the second equation
in \eqref{a0-alt} is curl-free.  We shall now use this variational
formulation to establish the bounds \eqref{nab-a0}, \eqref{a0-diff}.

At first glance it seems we are in trouble when $s<1$ because
\eqref{a0-minimize} cannot be controlled by the $H^s_x \times
H^{s-1}_x$ norm of $\phi[t]$.  However, we may ``renormalize''
$L_{\phi,\phi_t}(A_0)$ by defining
\begin{align*}
\tilde L_{\phi,\phi_t}(A_0) &:= L_{\phi,\phi_t}(A_0) - L_{\phi,\phi_t}(0) \\
&= \int_{\R^3} \frac{1}{2} |\nabla_x A_0|^2 + A_0 \Im( \phi
\overline{\phi_t} ) + \frac{1}{2} |A_0|^2 |\phi|^2.
\end{align*}
In particular, since $\tilde L_{\phi,\phi_t}(A_0) \leq 0$, we have
$$ \int_{\R^3} |\nabla_x A_0|^2
\lesssim \left|\int_{\R^3} A_0 \Im( \phi \overline{\phi_t})\right|
$$ so by Cauchy-Schwarz we have \eqref{cz}. From Lemma \ref{mult}
see that $A_0$ verifies \eqref{nab-a0}.  The claim for $A_{0,t}$ is
much simpler, following easily from \eqref{a0-eq}, Lemma \ref{mult},
Sobolev and H\"older.

It remains to establish \eqref{a0-diff}.  We fix
$\phi,A,\phi_t,\phi',A',\phi'_t$ (and hence $A_0, A'_0$).  From
\eqref{nab-a0} we have
\begin{equation}\label{ah1}
\| A_0 \|_{\dot H^1_x} + \| A'_0 \|_{\dot H^1_x} \lesssim M^2.
\end{equation}
If we write $A_0 = A'_0 + h$ and $A_{0,t} = A'_{0,t} + h_t$, our
task is to show that
\begin{equation}\label{abomb}
\| h \|_{\dot H^1_x} + \| h_t \|_{L^2_x} \lesssim M^5 ( \| \phi -
\phi' \|_{H^s_x} + \|A-A'\|_{H^s_x} + \| \phi_t - \phi'_t
\|_{H^{s-1}_x} ).
\end{equation}

We begin with the estimation of $\| h \|_{\dot H^1_x}$. From the
variational characterisation of $A_0$ we have
$$ \tilde L_{\phi', \phi'_t}(A'_0) -  \tilde L_{\phi', \phi'_t}(A_0) \leq 0$$
and thus by the triangle inequality
\begin{equation}
\tilde L_{\phi,\phi_t}(A'_0) - \tilde L_{\phi,\phi_t}(A_0) \lesssim
|\tilde L_{\phi,\phi_t}(A_0) - \tilde L_{\phi',\phi'_t}(A_0)| +
|\tilde L_{\phi,\phi_t}(A'_0) - \tilde L_{\phi',\phi'_t}(A'_0)|.
\end{equation}
Since
$$ \tilde L_{\phi,\phi_t}(A'_0) - \tilde L_{\phi,\phi_t}(A_0) = \frac{1}{2} \int_{\R^3}|\nabla_x h|^2 + |h|^2 |\phi|^2 \gtrsim \| h \|_{\dot H^1_x}^2$$
we thus see that it suffices to show that
\begin{equation}\label{l-claim}
|\tilde L_{\phi,\phi_t}(A_0) - \tilde L_{\phi',\phi'_t}(A_0)|
\lesssim M^5 \| \phi[t] - \phi'[t] \|_{H^s_x \times H^{s-1}_x}
\end{equation}
and similarly for $A_0$ replaced by $A'_0$.  Using the definition of
$\tilde L$, we may estimate the left-hand side of \eqref{l-claim} by
$$ \lesssim \int_{\R^3} |A_0| |\phi \overline{\phi_t} - \phi' \overline{\phi'_t}|
+ |A_0|^2 \left| |\phi|^2 - |\phi'|^2 \right|.$$ Splitting $\phi
\overline{\phi_t} - \phi' \overline{\phi'_t}$ as a sum of two
differences and using \eqref{m-def} and Lemma \ref{mult}, we obtain
$$ \| \phi \overline{\phi_t} - \phi' \overline{\phi'_t}\|_{\dot H^{-1}_x} \lesssim M \| \phi[t] - \phi'[t] \|_{H^s_x \times H^{s-1}_x}.$$
Also, from \eqref{ah1} and Sobolev we have
$$ \| |A_0|^2 \|_{L^3_x} = \| A_0\|_{L^6_x}^2 \lesssim \|A_0 \|_{\dot H^1_x}^2 \lesssim M^4$$
and from Sobolev we have
$$ \| |\phi|^2 - |\phi'|^2\|_{L^{3/2}_x} \lesssim (\| \phi \|_{L^3_x} + \| \phi'\|_{L^3_x})
\| \phi - \phi' \|_{L^3_x} \lesssim M \| \phi - \phi' \|_{H^s_x} $$
and the claim \eqref{l-claim} follows.    This yields the desired
bound \eqref{abomb} for $\| h \|_{\dot H^1_x}$.  The analogous claim
for $h_t$ follows from \eqref{a0-eq}, H\"older, Sobolev, and Lemma
\ref{mult} as before.  This gives \eqref{abomb} and hence
\eqref{a0-diff} as desired.
\end{proof}

\begin{remark} Heuristically, Proposition \ref{a0-est} allows us to eliminate $A_0$ from (MKG-CG), and think of this system as an evolution purely in $\underline \Phi$.  Indeed from the above analysis one morally has $A_0 \approx \Delta^{-1} (\underline \Phi \nabla_x \underline \Phi)$.  However we shall keep $A_0$ explicit in our computations.
\end{remark}

\begin{remark} Proposition \ref{a0-est} asserts that $A_0$ is somewhat smoother than $\underline \Phi$: it is in $\dot H^1_x$ even though $\phi$ is merely in $H^s_x$.  However we cannot place $A_0$ in $H^1_x$ or even in $L^2_x$ because of the slow decay of $A_0$ at infinity mentioned earlier.
\end{remark}

\begin{remark}\label{smooth-remark} Proposition \ref{a0-est} implies that rough initial data
$\Phi[0] \in [H^s]$  (see \eqref{hsbracket}) obeying \eqref{compat}
can be approximated arbitrarily closely in $[H^s]$ norm by smooth
initial data $\Phi'[0]$, also obeying \eqref{compat}.  We sketch the
argument as follows.  Given $\Phi[0] \in [H^s]$ we can first
approximate $\underline \Phi[0]$ in $H^s_x \times H^{s-1}_x$ norm by
a smooth $\underline \Phi'[0]$ which still obeys the divergence-free
condition $\nabla_x \cdot \underline A'[0] = 0$.  Then we construct
$A'_0(0)$ as above, and $\partial_t A'_0(0)$ by \eqref{a0t-eq}.
From \eqref{a0-diff} we see that $\nabla_{x,t} A'_0(0)$ is close to
$\nabla_{x,t} A_0(0)$ in $L^2_x$, and hence in $H^{s-1}_x$.
Combining all these estimates we thus see that $\Phi'[0]$ is close
to $\Phi[0]$ in $[H^s]$ norm as desired.
\end{remark}

\section{Fixed-time Hamiltonian estimates}\label{hamil-sec}

Using the elliptic theory for $A_0$ and the machinery of
frequency-localized spaces, we are now ready to understand  the
Hamiltonian \eqref{hamil-def}.

From \eqref{hamil-def} and the triangle inequality we have
$$ \H[\Phi[t]]
\lesssim \| \nabla_x A_0(t) \|_{L^2_x}^2 + \| \nabla_{x,t}
\underline{A}(t) \|_{L^2_x}^2 + \| \nabla_{x,t} \phi(t) \|_{L^2_x}^2
+ \| A_0(t) \phi(t) \|_{L^2_x}^2 + \| \underline{A}(t) \phi(t)
\|_{L^2_x}^2.$$ From H\"older's inequality and the Sobolev embedding
$\dot H^1_x \subseteq L^6_x$ we thus have
\begin{equation}\label{h-bound}
\H[\Phi[t]] \lesssim (\| \nabla_x A_0(t) \|_{L^2_x}^2 + \|
\nabla_{x,t} \underline{\Phi}(t) \|_{L^2_x}^2) (1 + \| \phi(t)
\|_{L^3_x}^2).
\end{equation}

This allows us to control the Hamiltonian by the $[H^1]$ norm.

Now we look at the converse: given that the Hamiltonian is finite,
what bounds can we place on $ \Phi$?  This question was studied in
\cite{kl-mac:mkg}, however in that paper some $L^2_x$ control on
$\phi$ and $A$ was also assumed at time zero.  We will not be able
to use such control as the $L^2_x$ norm is
supercritical\footnote{One might consider adding a mass term to
(MKG-CG) and posing the same global well-posedness questions.  It
seems likely that one has similar results for the massive (MKG-CG),
however the argument would be technically more complicated due to
the Schr\"odinger-like behaviour of low frequencies.  Also, the mass
term is still supercritical and so this does not solve the
difficulties of using the $L^2$ norm.} and so will behave badly with
respect to a rescaling argument which we will use later.
Fortunately, we can still obtain good control on $\Phi$ without the
$L^2_x$ norm, although some odd things happen at low frequencies.

\begin{lemma}[Hamiltonian controls $H^1$]\label{hamil-control}  Let $t \in \R$ be fixed. Suppose that $A_0(t)$, $\underline{\Phi}[t] = (\underline A(t), \phi[t])$ are $[H^1]$ functions such that $\nabla_x \cdot \underline A[t] = 0$ and $\H[\Phi[t]] \lesssim 1$.
Then we have the estimates
\begin{align}
\Phi(t) &\in C \Ball(H^1_x + \L^6_1)\label{phi0}\\
\nabla_{x,t} \underline{\Phi}(t) &\in C\Ball(L^2_x + \L^3_2) \label{gradphi0} \\
\nabla_x A_0(t), \nabla_{x,t} \underline{A}(t) &\in C \Ball(L^2_x)
\label{a}.
\end{align}
\end{lemma}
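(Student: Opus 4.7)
The plan is to extract $L^2$ gradient bounds from the Hamiltonian using the Coulomb gauge together with Hodge orthogonality, apply the diamagnetic inequality to obtain $\phi \in L^6_x$, and then decompose each field into a Littlewood-Paley high plus low piece. The main technical obstacle will be showing that the high-frequency piece $P_{>1}\phi$ lies in $H^1_x$, since $\H$ gives no direct control on $\|\phi\|_{L^2}$ — this forces a paraproduct bootstrap.

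\textbf{Stage 1 (the bounds in \eqref{a} and the easy cases of \eqref{phi0}).} Starting from \eqref{hamil-def}, the Coulomb condition $\nabla_x\cdot\underline{A}=0$ gives $\int|\nabla_x\times\underline{A}|^2=\int|\nabla_x\underline{A}|^2$ after integrating by parts. Differentiating the gauge condition in time shows $\partial_t\underline{A}$ is divergence-free, while $\nabla_xA_0$ is curl-free, so Hodge orthogonality yields $\int|\nabla_xA_0-\partial_t\underline{A}|^2=\int|\nabla_xA_0|^2+\int|\partial_t\underline{A}|^2$. From $\H\lesssim 1$ I read off $\|\nabla_xA_0\|_{L^2},\|\nabla_{x,t}\underline{A}\|_{L^2},\|\underline{D}\phi\|_{L^2},\|D_0\phi\|_{L^2}\lesssim 1$, proving \eqref{a}. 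Sobolev embedding gives $A_0,\underline{A}\in L^6_x$; the pointwise diamagnetic inequality $\bigl|\nabla_x|\phi|\bigr|\le|\underline{D}\phi|$ then places $\phi\in L^6_x$ as well. For $f\in\{A_0,\underline{A}\}$, the splitting $f=P_{\le 1}f+P_{>1}f$ realizes \eqref{phi0}: the low piece is in $\L^6_1$ by construction, and $\|P_{>1}f\|_{H^1_x}\lesssim\|\nabla_xf\|_{L^2}$ via the spectral inequality $\|P_{>1}f\|_{L^2}\le\|\nabla P_{>1}f\|_{L^2}$.

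\textbf{Stage 2 (main obstacle: $P_{>1}\phi\in H^1_x$).} Using the identity $\nabla\phi=\underline{D}\phi-i\underline{A}\phi$, I apply $P_{>1}$ and need only show $\|P_{>1}(\underline{A}\phi)\|_{L^2}\lesssim 1+\epsilon\|\nabla P_{>1}\phi\|_{L^2}$ for some small $\epsilon$. Expanding both factors into Littlewood-Paley blocks with a large gap parameter $M$, I split the paraproduct into three regimes. In the high--low regime ($k_1\ge k_2+M$), H\"older $L^2\cdot L^\infty$ with $\|P_{k_1}\underline{A}\|_{L^2}\lesssim 2^{-k_1}$ and Bernstein $\|P_{\le k_1-M}\phi\|_{L^\infty}\lesssim 2^{(k_1-M)/2}\|\phi\|_{L^6}$ yields a total $\lesssim 2^{-M/2}$. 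In the high--high regime ($|k_1-k_2|\le M$), H\"older $L^3\cdot L^6$ with Bernstein $\|P_{k_1}\underline{A}\|_{L^3}\lesssim 2^{-k_1/2}\|P_{k_1}\nabla\underline{A}\|_{L^2}$ and $\phi\in L^6_x$ gives $\lesssim 1$. In the low--high regime ($k_2\ge k_1+M$), H\"older $L^\infty\cdot L^2$ with Bernstein on $\underline{A}$ contributes $\lesssim 2^{(k_2-M)/2}\|P_{k_2}\phi\|_{L^2}$ at output frequency $2^{k_2}$; summing via Plancherel and the bound $\|P_{k_2}\phi\|_{L^2}\le 2^{-k_2}\|P_{k_2}\nabla\phi\|_{L^2}$ yields $\lesssim 2^{-M/2}\|\nabla P_{>1}\phi\|_{L^2}$. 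Taking $M$ a sufficiently large absolute constant and absorbing the low--high term into the LHS closes the bootstrap, giving $\|P_{>1}\phi\|_{H^1_x}\lesssim 1$ and completing \eqref{phi0}.

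\textbf{Stage 3 (proof of \eqref{gradphi0}).} The bound on $\nabla_{x,t}\underline{A}$ is Stage 1. For $\nabla\phi=\underline{D}\phi-i\underline{A}\phi$ I decompose $\underline{A}\phi=\underline{A}_L\phi_L+\underline{A}_L\phi_H+\underline{A}_H\phi_L+\underline{A}_H\phi_H$ using the splittings $\underline{A}=\underline{A}_L+\underline{A}_H,\ \phi=\phi_L+\phi_H$ from Stages 1--2. The three mixed/high-high products lie in $L^2_x$ by H\"older, using $\underline{A}_H,\phi_H\in H^1_x\hookrightarrow L^3_x$ by Sobolev and $\underline{A}_L,\phi_L\in L^\infty_x$ by Bernstein from $L^6_x$. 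The remaining $\underline{A}_L\phi_L$ has Fourier support $\le 2$ and is in $L^3_x$ via H\"older $L^6\cdot L^6$, hence in $\L^3_2$. The time derivative $\partial_t\phi=D_0\phi-iA_0\phi$ is handled identically, with $A_0\in L^6_x$ playing the role of $\underline{A}$.
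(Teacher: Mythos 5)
Your proposal is correct, and Stages 1 and 3 essentially follow the paper's own route (Hodge orthogonality plus the Coulomb gauge for \eqref{a}, the diamagnetic inequality for $\phi \in L^6_x$, and H\"older/Bernstein product estimates for \eqref{gradphi0}). Stage 2, however, is genuinely different. The paper never runs a paraproduct bootstrap: it takes the spatial divergence of $\underline{D}\phi \in L^2_x$ and uses $\nabla_x \cdot \underline{A} = 0$ to get $\Delta \phi + i A_j \partial_j \phi \in \dot H^{-1}_x$; since $\partial_j \phi = D_j\phi - iA_j\phi \in L^2_x + L^3_x$ and $A_j \in L^6_x$, the term $A_j \partial_j \phi$ lands in $L^{3/2}_x + L^2_x \subseteq H^{-1}_x$, whence $\Delta \phi \in H^{-1}_x$ and the high-frequency part of $\phi$ is in $H^1_x$ directly, with no absorption. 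Your Littlewood--Paley argument buys independence from the gauge condition at this step (you only use $\nabla_x\cdot\underline A=0$ in Stage 1), but it costs two caveats you should make explicit: (i) the absorption of the low--high term requires the a priori finiteness of $\|\nabla P_{>1}\phi\|_{L^2_x}$, which is exactly what the qualitative $[H^1]$ hypothesis in the statement supplies (the paper stresses this hypothesis is purely qualitative); and (ii) at the frequency-one boundary the blocks $P_{k_2}\phi$ with $k_2 = O(1)$ should be bounded by $O(1)$ via Bernstein from $\phi \in L^6_x$ rather than fed back into the bootstrap quantity, and the high--high sum produces a constant depending on the gap parameter $M$, which is harmless only because $M$ is ultimately fixed as an absolute constant. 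With these points spelled out, your argument closes; the paper's divergence trick is shorter and shows precisely where the Coulomb gauge enters, while yours is more robust in not needing it for the $\phi$ estimate.
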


Informally, control of the Hamiltonian allows one to place most of
$A_0(t)$ and $\underline \Phi[t]$ in $H^1_x \times L^2_x$, except
for the low frequency component, which is only in $L^6_x$ or
$L^3_x$.  The hypothesis that $A_0, \underline{\Phi}$ are $[H^1]$ functions is a purely qualitative hypothesis; the constants $C$ do not depend on the $[H^1]$ norms of these functions.

\begin{proof}
From the hypothesis and \eqref{hamil-def} we have
\begin{align}
\nabla_x A_0(t) - \partial_t \underline{A}(t) & \in C\Ball(L^2_x) \label{a0-energy}\\
\nabla_x \times \underline A(t) & \in C\Ball(L^2_x) \label{aj-energy}\\
D_0 \phi(t) & \in C\Ball(L^2_x) \label{phi0-energy}\\
\underline{D} \phi(t) & \in C\Ball(L^2_x). \label{phij-energy}
\end{align}

From \eqref{aj-energy} and the hypothesis $\nabla_x \cdot
\underline{A}(t) = 0$ we have $
 \underline A(t) \in C \Ball(\dot H^1_x).
$ Also, by taking divergence-free and curl-free components of
\eqref{a0-energy} using the hypothesis $\nabla_x \cdot \partial_t
\underline{A}(t) = 0$ we see that
$$ \nabla_x A_0(t), \partial_t A_j(t) \in C \Ball(L^2_x).$$
Combining these estimates together we obtain \eqref{a}. Using the
embedding $\dot H^1_x \subseteq H^1_x + \L^6_1$ which comes from
applying Sobolev embedding to the low frequencies of $\dot H^1_x$,
we thus see that $A_0$ and $\underline{A}$ satisfy the required
estimates \eqref{phi0}, \eqref{gradphi0}.

It remains to show the corresponding estimates for $\phi$.  We begin
with the pointwise identity
$$ 2 |\phi(t)| \partial_j |\phi(t)| = \partial_j( |\phi(t)|^2 ) = 2 \Re(\phi(t) \overline{\partial_j \phi(t)}) = 2 \Re(\phi(t) \overline{D_j \phi(t)})$$
for $j=1,2,3$.  In particular we have the ``diamagnetic inequality''
$$ |\partial_j |\phi(t)|| \leq |D_j \phi(t)|.$$
From \eqref{phij-energy}, the Sobolev embedding $\dot H^1_x
\subseteq L^6_x$, and the trivial observation that $|\phi(t)|$ and
$\phi(t)$ have the same $L^6_x$ norm we thus have
\begin{equation}\label{l6-bound}
\phi(t) \in C \Ball(L^6_x).
\end{equation}
Also, from our estimates on the $A_j$ and $A_0$ and Sobolev
embedding we have
\begin{equation}\label{aj-6}
A_0(t), A_j(t) \in C \Ball(L^6_x).
\end{equation}
By H\"older we thus have
$$ A_0(t) \phi(t), A_j(t) \phi(t) \in C \Ball(L^3_x).$$
Combining this with \eqref{phi0-energy}, \eqref{phij-energy} we
obtain
\begin{equation}\label{phi-23}
\partial_t \phi(t), \partial_j \phi(t) \in C\Ball(L^2_x + L^3_x).
\end{equation}
On the other hand, if we take the divergence of \eqref{phij-energy}
using the hypothesis $\nabla_x \cdot \underline A[t] = 0$ we obtain
$$ \Delta \phi(t) + i A_j(t) \partial_j \phi(t) \in C\Ball(\dot H^{-1}_x).$$
From \eqref{phi-23}, \eqref{aj-6} and H\"older we have
$$ i A_j(t) \partial_j \phi(t) \in C\Ball(L^{3/2}_x + L^2_x).$$
From Sobolev we have $L^{3/2}_x, L^2_x, \dot H^{-1}_x \subseteq
H^{-1}_x$.  From this and the previous we thus have
$$ \Delta \phi(t) \in C \Ball(H^{-1}_x).$$
We now divide $\phi(t)$ smoothly into a low frequency component
supported on $|\xi| \leq 1$, and the remainder supported on $|\xi|
\geq 1/2$.  From \eqref{l6-bound} and the above equation we see that
the low frequency part is in $\L^6_1$ and the remainder is in
$H^1_x$, so $\phi(t)$ obeys \eqref{phi0}.

It only remains to show that $\partial_t \phi(t)$ obeys
\eqref{gradphi0}.  From \eqref{phi0-energy} it suffices to show that
$$ A_0(t) \phi(t) \in C\Ball(L^2_x + \L^3_2).$$
Since we have already shown that $A_0(t)$, $\phi(t)$ obey
\eqref{phi0}, the claim then follows from \eqref{sobolev-mult},
\eqref{lpr-mult}, and \eqref{lpr-holder}.
\end{proof}

\section{Global well-posedness: preliminary reduction}\label{global-sec}

We now begin the proof of Theorem \ref{main}.  Fix $\sqrt{3}/2 < s <
1$, and fix the initial data $\Phi[0]$ obeying the hypotheses of the
theorem.  Let $T_*$ denote the maximal time of existence for which
one can construct a solution $\Phi$ in $[H^s]$; our objective is to
show that $T_*$ is infinite.

In Section \ref{local-sec} we shall prove the following local
well-posedness result (essentially due to Cuccagna \cite{cuccagna}):

\begin{theorem}[Local well-posedness]\label{hs-lwp} Let $5/6 < s \leq 1$ and $M > 0$.  Let $\Phi[0] \in M\Ball([H^s])$ obey the compatibility conditions \eqref{compat}.  Then there exists a solution $\Phi$ to (MKG-CG) with initial data $\Phi[0]$ on the time interval $[-T,T]$ for some $T = T(M) > 0$.  Furthermore, for each time $t \in [-T,T]$ the solution map $\Phi[0] \mapsto \Phi[t]$ is a Lipschitz map from $M\Ball([H^s])$ to $[H^s]$ (with the Lipschitz constant depending on $s$ and $M$).
\end{theorem}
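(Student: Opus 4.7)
The plan is to reduce this local well-posedness statement essentially to the theorem of Cuccagna \cite{cuccagna}, with the additional task of handling the $A_0$ field via Proposition \ref{a0-est}. The argument is a standard Picard iteration in $H^{s,b}$ (Bourgain--Klainerman--Machedon) spaces for the hyperbolic variables $\underline{\Phi} = (\underline{A}, \phi)$, run on a small time slab $[-T, T] \times \R^3$ with $T = T(M)$ to be chosen.

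First, I would use Proposition \ref{a0-est} to eliminate $A_0$ from the evolution, writing it as a locally Lipschitz function $A_0 = A_0[\underline{\Phi}]$ of the hyperbolic data. The compatibility conditions \eqref{compat} guarantee that this reconstruction matches the prescribed $A_0[0]$ at $t=0$, and the bounds \eqref{nab-a0}, \eqref{a0-diff} give the required elliptic control of $A_0$ in $\dot H^1_x$. Then, using \eqref{ba-eq}--\eqref{bphi-eq}, I convert the system into a Duhamel integral equation for $\underline{\Phi}$ on a ball in $H^{s,b}$ (with $b > \tfrac{1}{2}$ slightly), localized to $[-T,T]$ in the usual way.

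Next, the nonlinear terms are estimated in $H^{s,b-1+}$ in two groups. \emph{(i) The bilinear Coulomb-gauge terms} $\P \Im(\phi \overline{\nabla_x \phi})$ and $\P \underline{A} \cdot \nabla_x \phi$: thanks to the divergence-free condition \eqref{div0-eq}, these reduce modulo acceptable errors to classical null forms $Q_{jk}$, as will be made precise in Section \ref{caricature-sec}, and are then handled by the Klainerman--Machedon null-form bilinear estimate (valid comfortably for $s > 5/6$, indeed down to $s > 3/4$). \emph{(ii) The $A_0$-type quadratic terms} $A_0 \partial_t \phi$, $(\partial_t A_0)\phi$ \emph{and the cubic terms} $|\underline{A}|^2 \phi$, $A_0^2\phi$, $\underline{A}|\phi|^2$: these are controlled by Sobolev multiplication \eqref{sobolev-mult}, Lemma \ref{mult}, and the elliptic estimates of Proposition \ref{a0-est}, together with the standard $H^{s,b}$ embedding into Strichartz norms. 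A contraction mapping on $\{\underline{\Phi} : \|\underline{\Phi}\|_{H^{s,b}([-T,T])} \leq 2M\}$ then produces the solution for $T=T(M)$ small enough, and re-running the same estimates on the difference of two solutions yields the Lipschitz property of the solution map.

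The main obstacle, and the reason this is not merely a direct citation, is that $A_0$ is controlled only in $\dot H^1_x$ rather than $H^1_x$; its low-frequency tail must be placed in $\mathcal{L}^6_1$ and the nonlinearities involving $A_0$ must be estimated via the frequency-localized multiplication and Hölder properties \eqref{lpr-mult}, \eqref{lpr-holder} from Section \ref{notation-sec}, rather than by plain Sobolev multiplication. This wrinkle, absent in Cuccagna's original presentation, is what forces the low-frequency machinery on us even at the local stage, but once absorbed it causes no loss in the regularity threshold, so the restriction $s > 5/6$ comes entirely from the null-form bilinear estimate (and in fact is not sharp here — it is chosen only to match the range later needed for the almost-conservation argument).
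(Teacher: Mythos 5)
Your overall architecture (null forms for the derivative bilinear terms, elliptic treatment of $A_0$, $H^{s,b}$ plus Strichartz for the rest, smallness in $T$) is broadly the paper's, but there is a substantive gap in how you control $A_0$. Your iteration carries only $\underline{\Phi}$, and you assert that the fixed-time bounds \eqref{nab-a0}, \eqref{a0-diff} of Proposition \ref{a0-est} — i.e. $A_0 \in \dot H^1_x$, $\partial_t A_0 \in L^2_x$ — are ``the required elliptic control''. They are not. For $s<1$ the quadratic terms $A_0 \partial_t \phi$ and $(\partial_t A_0)\phi$ must be placed in (roughly) $H^{s-1,s-1}$, and fixed-time $\dot H^1_x \times L^2_x$ control of $A_0[t]$ cannot achieve this: by duality one would need a product estimate of the shape $\dot H^1_x \cdot H^{1-s}_x \subseteq H^{1-s}_x$, which fails the condition in \eqref{sobolev-mult} (one would need $1-s < 1 + (1-s) - \tfrac32$), and no Strichartz improvement is available for $A_0$ since it obeys an elliptic rather than hyperbolic equation. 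The estimates actually used, \eqref{a0t-phi} and \eqref{a0-phit} of Proposition \ref{bilinear}, require $\nabla_{x,t} A_0 \in H^{1/2+,0} = L^2_t H^{1/2+}_x$, i.e. roughly $3/2+$ derivatives on $A_0$ in an $L^2_t$ sense; in the paper this extra smoothness is derived by feeding the spacetime bilinear estimate \eqref{no-null} for $\N_2 = \underline{\Phi}\nabla_{x,t}\underline{\Phi}$ into the elliptic equations for $A_0$ (the step culminating in \eqref{ao}), and the resulting norm is carried inside the iteration norm \eqref{xdef} with a small weight $\eps$, precisely because time localization yields no smallness for the elliptic component. Your sketch contains neither this estimate nor its difference analogue (needed for the Lipschitz claim), so as written the treatment of the $\N_1$-type terms does not close.

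Two secondary points. First, your attribution of the threshold is wrong: the null form estimate \eqref{null} is valid for all $s > 3/4$, and in the paper the hypothesis $s > 5/6$ is used only to control the cubic terms $\N_3$ by placing each factor in $L^6_t L^6_x$ via \eqref{hsd-strichartz}; your closing sentence (that $s>5/6$ ``comes entirely from the null-form bilinear estimate'') contradicts both the paper and your own earlier parenthetical. Second, the paper's route differs from yours in two harmless ways worth knowing: it does not run a Picard iteration on rough data, but proves the a priori and Lipschitz bounds for smooth solutions (which exist globally by Eardley--Moncrief and Klainerman--Machedon) and then passes to the limit via Remark \ref{smooth-remark}; and it keeps $A_0$ explicit in the norm rather than eliminating it. Eliminating $A_0$ as you propose is viable in principle (the paper itself remarks on this), and the $\L^p_R$ low-frequency machinery is not actually needed at this local stage ($\dot H^1_x \subseteq L^6_x$ suffices for $A_0$ here), but either way the missing $L^2_t H^{1/2+}_x$ bound on $\nabla_{x,t} A_0$, together with its Lipschitz-in-data version, is the ingredient your proposal must supply.
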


\begin{remark} In view of the work of Cuccagna\cite{cuccagna}, the local existence theorem here should in fact extend to the range $s>3/4$, and a possibly weakened version of this local existence theorem should also hold in the range $s > 1/2$ thanks to the work of Machedon and Sterbenz\cite{machedon}.  However, to avoid technicalities we will restrict ourselves to the case $s>5/6$ (which covers the range $s > \sqrt{3}/2$ that our main theorem covers).
\end{remark}

Assume this theorem for the moment.  Then we have $T_* > 0$.
Furthermore, if $T_*$ is finite, then Theorem \ref{hs-lwp} forces
one to have $\lim_{t \to T_*} \| \Phi[t] \|_{[H^s]} = +\infty$.
Thus if we can prove the polynomial growth bound \eqref{poly} for $t
< T_*$, we will have obtained global well-posedness.

By another application of Theorem \ref{hs-lwp} and a standard
limiting argument (using Remark \ref{smooth-remark}) we may assume
that $\Phi[0]$ is smooth and $[H^1]$, in which case we have a global
smooth and $[H^1]$ solution from the results in the introduction
(\cite{eardley, kl-mac:mkg}). Thus it will suffice to prove
\eqref{poly} for global smooth solutions.

Henceforth our constants $C$ are allowed to depend on $\| \Phi[0]
\|_{[H^s]}$, thus for instance $ \| \Phi[0] \|_{[H^s]} \lesssim 1$.

Fix the time $T$ in \eqref{poly}.  In view of Theorem \ref{hs-lwp}
we may assume $T \gtrsim 1$. As is usual in applications of the
$I$-method, we will need to rescale the equation using
\eqref{mkg-scaling}, replacing $\Phi$ by the rescaled solution
$$ \Phi^{(\lambda)}(t,x) := \frac{1}{\lambda} \Phi(\frac{t}{\lambda}, \frac{x}{\lambda})$$
for some large $\lambda = \lambda(T) \gg 1$ to be chosen later. Note
that $\Phi^{(\lambda)}$ also solves (MKG-CG).  In order to obtain
\eqref{poly} at time $T$ we will need to control $\Phi^{(\lambda)}$
at time $\lambda T$.

We would like to use the Hamiltonian $\H[\Phi^{(\lambda)}[t]]$
defined in \eqref{hamil-def}.  Unfortunately we do not have enough
regularity on $\underline{\Phi}$ to ensure this Hamiltonian is
finite since $s < 1$.  (On the other hand, $A_0$ has enough
regularity thanks to \eqref{nab-a0}.)  To get around this difficulty
we shall use the method of almost conservation laws. 

We pick a large number $N = N(T) \gg 1$ to be specified later.  Let
$m(\xi)$ be a smooth radial positive symbol such that $m(\xi) = 1$
for $|\xi| \leq N$ and $m(\xi) = |\xi|^{s-1}/N^{s-1}$ for $|\xi| >
2N$, and let $I$ be the Fourier multiplier
$$ \widehat{If}(\xi) := m(\xi) \hat f(\xi).$$
Thus $I$ is the identity for bounded\footnote{We have two frequency
cutoffs in our argument, one at 1 and one at $N$.  To avoid
confusion as to what ``low'' and ``high'' frequency are, we refer to
frequencies $|\xi| \lesssim 1$ as \emph{low}, frequencies $1 \ll
|\xi| \ll N$ as \emph{medium}, and frequencies $|\xi| \gtrsim N$ as
\emph{high}.  We will also refer to frequencies $|\xi| \ll N$ as
\emph{bounded}, and frequencies $|\xi| \gg 1$ as \emph{local}.}
frequencies $|\xi| \ll N$ and is smoothing of order $1-s$ for high
frequencies $|\xi| \gtrsim N$.  Observe that the convolution kernel
of $I$ is integrable, thus $I$ is bounded on every
translation-invariant Banach space.  Furthermore, we have the
smoothing estimates
\begin{equation}\label{i-loss}
\| u \|_{H^s_x} \lesssim \| I u \|_{H^1_x} \lesssim N^{1-s} \| u
\|_{H^s_x}
\end{equation}
\begin{equation} \label{i-loss-hom}
\|I u \|_{\dot{H}^1_x} \lesssim N^{1-s} \| u \|_{\dot{H}^s_x}.
\end{equation}
We will use $\H[I\Phi^{(\lambda)}[t]]$ as a substitute for the
Hamiltonian.  Unfortunately, the loss of $N^{1-s}$ in \eqref{i-loss}
would make the modified Hamiltonian large.  However, the scaling
parameter $\lambda$ (combined with the fact that the energy
regularity $H^1_x$ is sub-critical) can be used to rescale the
Hamiltonian to be small again.  More precisely, we have

\begin{lemma}[Rescaled Hamiltonian is small]\label{small-hamil}  Suppose we choose $N \gg 1$, $\lambda \gg 1$ so that
\begin{equation}\label{n-lambda}
\lambda^{1/2-s} N^{1-s} \ll 1.
\end{equation}
Then we have $\H[I\Phi^{(\lambda)}[0]] \leq 1$.
\end{lemma}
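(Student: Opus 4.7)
The strategy is to apply the Hamiltonian-vs-energy bound \eqref{h-bound} --- which is a purely algebraic consequence of the formula defining $\H$ together with Sobolev, and hence valid for any collection of fields, not just solutions --- to the $I$-modified rescaled data $I\Phi^{(\lambda)}[0]$, then control each factor using two elementary ingredients: the dilation scaling identity $\|u^{(\lambda)}\|_{\dot H^\sigma_x} = \lambda^{1/2-\sigma}\|u\|_{\dot H^\sigma_x}$ (valid at every regularity, whether for a field or for a time derivative at one regularity lower), and the smoothing bounds \eqref{i-loss}, \eqref{i-loss-hom}. Since $s<1$ the multiplier $m(\xi)$ satisfies $m\leq 1$ pointwise, so $I$ is a contraction on every $L^2_x$-based norm.

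Applied to $I\Phi^{(\lambda)}[0]$, \eqref{h-bound} yields
\[
\H[I\Phi^{(\lambda)}[0]] \lesssim \bigl(\|\nabla_x IA_0^{(\lambda)}(0)\|_{L^2_x}^2 + \|\nabla_{x,t} I\underline\Phi^{(\lambda)}(0)\|_{L^2_x}^2\bigr)\bigl(1 + \|I\phi^{(\lambda)}(0)\|_{L^3_x}^2\bigr).
\]
For the $A_0$ term, contractivity of $I$ on $\dot H^1_x$ followed by scaling and Proposition \ref{a0-est} applied to the (unrescaled) initial data (whose $[H^s]$-norm is $\lesssim 1$) give
\[
\|\nabla_x IA_0^{(\lambda)}\|_{L^2_x} \leq \|A_0^{(\lambda)}\|_{\dot H^1_x} = \lambda^{-1/2}\|A_0\|_{\dot H^1_x} \lesssim \lambda^{-1/2}.
\]
For $\underline\Phi^{(\lambda)}$, combining \eqref{i-loss-hom} with scaling at regularities $\sigma=s$ and $\sigma=s-1$ produces
\[
\|\nabla_{x,t} I\underline\Phi^{(\lambda)}\|_{L^2_x} \lesssim N^{1-s}\,\lambda^{1/2-s}\,\|\underline\Phi\|_{[H^s]} \lesssim N^{1-s}\lambda^{1/2-s}.
\]
Finally, the exponent $L^3_x$ is precisely scale-invariant under the dilation $u\mapsto u^{(\lambda)}$, so Sobolev $\dot H^{1/2}_x \subseteq L^3_x$ together with contractivity of $I$ gives $\|I\phi^{(\lambda)}\|_{L^3_x} \lesssim \|\phi^{(\lambda)}\|_{\dot H^{1/2}_x} = \|\phi\|_{\dot H^{1/2}_x} \lesssim 1$.

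Combining these estimates yields
\[
\H[I\Phi^{(\lambda)}[0]] \lesssim \lambda^{-1} + \lambda^{1-2s} N^{2-2s},
\]
which is $\leq 1$ once $\lambda \gg 1$ and $\lambda^{1/2-s} N^{1-s} \ll 1$, as hypothesized (absorbing implicit constants into the meaning of $\ll$). The only non-routine step is the treatment of $A_0^{(\lambda)}$: because $A_0$ is determined elliptically from $\underline\Phi$ rather than being freely prescribed as part of the data, one cannot read off its $\dot H^1_x$ regularity directly from the $[H^s]$-norm of the Cauchy data. This is precisely what Proposition \ref{a0-est} is for; once that uniform $\dot H^1_x$ bound is in hand, the scaling factor $\lambda^{-1/2}$ comes for free, and the problem reduces to routine book-keeping.
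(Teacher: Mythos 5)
Your proof is correct and follows essentially the same route as the paper: reduce via \eqref{h-bound}, use scale-invariance of $L^3_x$ plus Sobolev for the multiplicative factor, and use the $I$-smoothing loss $N^{1-s}$ together with the scaling gain $\lambda^{1/2-s}$ for $\nabla_{x,t}I\underline\Phi^{(\lambda)}(0)$. The only deviation is your treatment of the $A_0$ term, where contractivity of $I$ on $\dot H^1_x$, the scaling identity, and \eqref{nab-a0} applied to the unrescaled data give the cleaner bound $\lambda^{-1/2}$ in place of the paper's $N^{1-s}\lambda^{1/2-s}$ from \eqref{i-loss-hom} and \eqref{nab-a0}; both are $\ll 1$ under \eqref{n-lambda}, so this is a harmless (indeed slightly sharper) variant of the same argument.
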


\begin{proof}
Observe that
$$ \| I\phi^{(\lambda)}(0) \|_{L^3_x} \lesssim \| \phi^{(\lambda)}(0) \|_{L^3_x} = \| \phi(0) \|_{L^3_x} \lesssim \| \phi(0) \|_{H^s_x} \lesssim 1$$
so by \eqref{h-bound} it will suffice to show that $\| \nabla_x I
A_0^{(\lambda)}(0) \|_{L^2_x} \ll 1$ and $\|\nabla_{x,t} I
\underline{\Phi}^{(\lambda)}(0)\|_{L^2_x} \ll 1$. The former
estimate is easy, in fact by \eqref{i-loss-hom}, \eqref{nab-a0} we
have
$$ \| \nabla_x I A_0^{(\lambda)} \|^{2}_{L^2_x} \lesssim N^{2(1-s)}
\lambda^{1-2s} \| \Phi[0] \|_{[H^s]}^2.$$ For the latter estimate we
use \eqref{i-loss-hom}:
$$ \| \nabla_{x,t} I \underline{\Phi}^{(\lambda)}(0) \|_{L^2_x} \lesssim
N^{1-s} \| \nabla_{x,t} \underline{\Phi}^{(\lambda)}(0)
\|_{\dot{H}^{s-1}_x} \lesssim N^{1-s} \lambda^{1/2-s} \|
\nabla_{x,t} \underline{\Phi}(0) \|_{\dot{H}^{s-1}_x} $$

and the claim follows from \eqref{n-lambda} since $\| \Phi[0]
\|_{[H^s]} \lesssim 1$.
\end{proof}

Thus we can make the modified Hamiltonian small at time zero.  In
order to control the modified Hamiltonian at later times we use the
following almost conservation law for the modified Hamiltonian:

\begin{proposition}[Almost conservation law]\label{main-prop}  Let $(A_0, \underline \Phi)$ be a global smooth solution to (MKG-CG), and suppose $t_0$ is a time such that
\begin{equation}\label{hi-eps}
\H[I\Phi[t_0]] \leq 2.
\end{equation}
Then we have
\begin{equation}\label{h-shift}
\H[I\Phi[t]] = \H[I\Phi[t_0]] + O \left( \frac{1}{N^{ ( s - 1/2 )-}}
\right)
\end{equation}
for all $t \in [t_0 - \delta/2, t_0 + \delta/2]$, where $N^{0-} \ll
\delta \ll 1$ is a small constant depending only on $s$ and $N$.
\end{proposition}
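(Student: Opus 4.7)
The plan is to follow the standard I-method strategy: use a modified local well-posedness result at the $I$-regularity to bootstrap good spacetime norms for $I\Phi$ on the interval $[t_0-\delta, t_0+\delta]$, differentiate the modified Hamiltonian $\H[I\Phi]$ in time, and then exploit the commutator structure of the resulting error terms to extract a negative power of $N$.

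First I would invoke the modified local well-posedness result (Proposition \ref{i-local}, established in Section \ref{mod-sec}). The hypothesis \eqref{hi-eps}, combined with Lemma \ref{hamil-control}, provides $[H^1]$-like control (modulo the low-frequency $\L^p_R$ components developed in Section \ref{notation-sec}) on $I\Phi[t_0]$. Proposition \ref{i-local} then upgrades this to $X^{s,b}$-type norm control on $I\Phi$ over the entire slab $[t_0-\delta, t_0+\delta] \times \R^3$, with $N^{0-} \ll \delta \ll 1$. These serve as the workhorse bounds for all subsequent nonlinear bookkeeping.

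Next, since $\H$ is exactly conserved along the (MKG-CG) flow in the $I = \operatorname{id}$ case, the derivative $\frac{d}{dt} \H[I\Phi(t)]$ measures only the failure of $I\Phi$ to solve (MKG-CG) exactly. Writing out this derivative using \eqref{a0-eq}--\eqref{bphi-eq} (this is carried out in Section \ref{hamildiff-sec}), it decomposes as a spacetime integral of commutator terms of the schematic form $[I, \Phi] \nabla_{t,x} \Phi$ and $[I, \Phi^2]\Phi$, paired against the derivatives $\nabla_{t,x} I \Phi$ and $I D_\alpha \phi$ appearing in $\H[I \Phi]$. Integrating this from $t_0$ to $t \in [t_0 - \delta/2, t_0 + \delta/2]$ reduces \eqref{h-shift} to bounding these commutator integrals by $N^{-(s-1/2)-}$.

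Finally, I would estimate these commutator integrals using the $X^{s,b}$ and Strichartz bounds obtained in the first step. The crucial quantitative point is that the commutator $[I, f\, \cdot\,]$ enjoys a symbol-level cancellation that improves on the naive estimate whenever $f$ and the input interact at frequencies $\gtrsim N$, effectively trading one power of the high frequency for a power of $N$. Splitting by frequency configuration: the cubic commutators are handled in Section \ref{cubic-sec} via H\"older, Sobolev, and \eqref{sobolev-mult}; the non--null-form bilinear commutators are treated in Section \ref{n1-sec}; and the null-form bilinear commutators are treated in Section \ref{n0-sec}, where the $Q_{jk}$ cancellation noted in the remark after \eqref{bad-caricature} supplies the additional gain needed to absorb the loose derivative in $\P \underline A \cdot \nabla_x \phi$. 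The main obstacle is this last family: one must combine the bilinear $H^{s,b}$ null-form estimates of Section \ref{bil-sec} with a careful frequency decomposition (organised via the notation of Section \ref{freq-sec}) to convert the smoothing of $I$ and the null cancellation \emph{simultaneously} into the net gain $N^{-(s-1/2)-}$. The threshold $s > \sqrt{3}/2$ arises precisely from balancing the $N^{1-s}$ losses from \eqref{i-loss} on two high-frequency legs against this commutator/null gain.
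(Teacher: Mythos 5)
Your proposal is correct and follows essentially the same route as the paper: Proposition \ref{i-local} supplies the $H^{s,b}$-type spacetime bounds \eqref{phi-bound}--\eqref{A0t-bound} on the slab, Lemma \ref{identity} in Section \ref{hamildiff-sec} reduces $\frac{d}{dt}\H[I\Phi]$ to the commutators $[I,\N_k]$, and these are estimated exactly as you indicate in Sections \ref{cubic-sec}, \ref{n1-sec}, \ref{n0-sec}, with the null-form commutator being the sole source of the limiting decay $N^{-(s-1/2)-}$. The only small inaccuracy is your closing attribution of the threshold $s>\sqrt{3}/2$ to the commutator estimates themselves: within Proposition \ref{main-prop} the paper only needs roughly $s>5/6$, and the $\sqrt{3}/2$ restriction actually emerges later from reconciling \eqref{lt} with the rescaling condition \eqref{n-lambda} in the global iteration.
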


\begin{remark}
The error of $O \left( \frac{1}{N^{ ( s - 1/2 )-}} \right)$
corresponds to the restriction $s > \sqrt{3}/2$, but is not optimal.
In particular it seems
feasible that one could improve this error to $O(N^{-1/2+})$, which
would in principle allow us to obtain global well-posedness for $s >
5/6$. For the equation (NLW), an error of
$O(N^{-1+})$ is attainable, which corresponds to the regularity
$s>3/4$ (cf. \cite{kpv:gwp}). By combining this conservation law
with additional techniques, the global well posedness of (NLW) was
extended to the range $s>13/18$ in \cite{roy1}, with a further gain
to $s>7/10$ in \cite{roy2} (in the spherically symmetric case).
\end{remark}

The proof of Proposition \ref{main-prop} is rather lengthy and
(together with Theorem \ref{hs-lwp}) will occupy Sections
\ref{mod-sec}-\ref{n0-sec}.  For now, we see how this proposition
implies \eqref{poly} and hence Theorem \ref{main}.

From Lemma \ref{small-hamil} and $O(N^{(s- 1/2)-})$ applications of
Proposition \ref{main-prop} we can obtain the estimate
\begin{equation}\label{sup-hamil}
\sup_{0 \leq t \leq T} \H[I\Phi^{(\lambda)}[\lambda t]] \leq 2
\end{equation}
provided that
\begin{equation}\label{lt}
\lambda T \ll N^{(s- 1/2)-}.
\end{equation}
A little algebra shows that we can choose $\lambda \gg 1$, $N \gg 1$
so that \eqref{lt} and \eqref{n-lambda} simultaneously hold, so long
as $s> \sqrt{3}/2$.  Furthermore, both $\lambda$ and $N$ are at most
polynomial in $T$.

To finish up we use an integration in time argument inspired by a similar argument from \cite{klainerman:nulllocal}.
From \eqref{sup-hamil} and Lemma \ref{hamil-control} we have that
\begin{equation}\label{nabla-I}
\nabla_{x,t} I \underline \Phi^{(\lambda)}(t) \in C \Ball(L^2_x +
\L^3_2)
\end{equation}
for all $0 \leq t \leq T$ (note that $I \underline A^{(\lambda)}$ is
automatically divergence free).  Also, from \eqref{i-loss} we have
\begin{equation}\label{phi-0}
I \underline \Phi^{(\lambda)} (0) \in CT^C \Ball(H^1_x) \subseteq
CT^C \Ball(L^2_x).
\end{equation}
Using the fundamental theorem of calculus
$$ I \underline \Phi^{(\lambda)} (t) = I \underline \Phi^{(\lambda)} (0) + \int_0^t \partial_t I \underline \Phi^{(\lambda)} (t')\ dt'$$
we thus see that
$$ I \underline \Phi^{(\lambda)} (t) \in CT^C \Ball(L^2_x + \L^3_2).$$
Splitting $\Phi$ smooth into low frequencies $|\xi| \leq 4$ and a
remainder term $|\xi| \geq 2$ and using \eqref{nabla-I} we obtain
$$ I \underline \Phi^{(\lambda)} (t) \in CT^C \Ball(H^1_x + \L^3_2).$$
On the other hand, from Lemma \ref{hamil-control} we have
$$ A_0^{(\lambda)} (t) \in C\Ball(H^1_x + \L^6_1).$$
Multiplying the two using the Sobolev embedding $H^1_x \subset L^4$,
we obtain
$$ A_0^{(\lambda)}(t) I \underline \Phi^{(\lambda)}(t) \in CT^C \Ball(L^2_x).$$
But from \eqref{sup-hamil}, \eqref{hamil-def} we have
$$ \partial_t I \phi^{(\lambda)} (t) + i A^{(\lambda)}_0(t) I \phi^{(\lambda)} (t) \in CT^C \Ball(L^2_x)$$
and hence
$$ \partial_t I \phi^{(\lambda)} (t) \in CT^C \Ball(L^2_x).$$
Combining this with \eqref{a} we obtain
$$ \partial_t I \underline \Phi^{(\lambda)}(t) \in CT^C \Ball(L^2_x).$$
Applying the fundamental theorem of calculus and \eqref{phi-0} again
we obtain
$$ I \underline \Phi^{(\lambda)} (\lambda T) \in CT^C \Ball(L^2_x).$$
Combining this with \eqref{nabla-I} again we obtain
$$ I \underline \Phi^{(\lambda)} [\lambda T] \in CT^C \Ball(H^1_x \times L^2_x)$$
which implies from \eqref{i-loss} that
$$ \underline \Phi^{(\lambda)} [\lambda T] \in CT^C \Ball(H^s_x \times H^{s-1}_x).$$
Combining this with \eqref{nab-a0} we obtain
$$ \nabla_{x,t} \Phi^{(\lambda)}(\lambda T) \in CT^C \Ball(L^2_x).$$
Undoing the scaling we thus obtain \eqref{poly} as desired.  This
proves Theorem \ref{main}.

It remains to show Theorem \ref{hs-lwp} and Proposition
\ref{main-prop}.  This will occupy the remainder of the paper.

\section{A caricature for MKG-CG}\label{caricature-sec}

The system (MKG-CG) may appear excessively complicated, due to
vector structures, Riesz transforms, complex conjugates, and
constants such as $2i$.  To clean up some of the clutter we shall
adopt some notational conventions to reduce (MKG-CG) to a
``caricature'' form, which we will then use to prove both Theorem
\ref{hs-lwp} and Proposition \ref{main-prop}.

We adopt the convention that if $A$ is a scalar-, vector-, or
tensor-valued quantity, then $\O( A )$ denotes an expression which
is \emph{schematically} of the form $A$, or more precisely a finite
linear combination of expressions of the form $T_{i} \Re(A_i)$ and
$T'_i \Im(A_i)$, where $A_i$ denotes the various components of $A$
and $T_i$, $T'_i$ either denote constants or Riesz transforms (which
arise due to the presence of the Leray projection $\P$).  We recall
the well-known fact from Calder\'on-Zygmund theory (see e.g.
\cite{stein:small}) that these operators are bounded on $L^p_x$ for
every $1 < p < \infty$. We can then define quadratic schematic
expressions $\O( AB)$ and cubic ones $\O(A B C)$ by using the
convention that $AB$ denotes the tensor product of $A$ and $B$
(viewed as real tensors rather than complex, thus for instance
$\Re(A) \Im(B) = \O(AB)$), etc.  For example, we have
\begin{align*}
\Im( \phi \overline{D_0 \phi} ) &= \O( \phi \partial_t \phi ) + \O( A_0 \phi \phi )\\
&= \O( \underline \Phi \nabla_{x,t} \underline \Phi ) + \O( \Phi \Phi \Phi ) \\
(1-\P) (\Im \phi \overline{\nabla_x \phi} ) &= \O( \phi \nabla \phi ) \\
&= \O( \underline \Phi \nabla_{x,t} \underline \Phi ) \\
(1-\P) (A |\phi|^2 ), \P (A |\phi|^2), |\underline A|^2 \phi, |A_0|^2 \phi &= \O( \Phi \Phi \Phi ) \\
2i A_0 \partial_t \phi &= \O( A_0 \partial_t \underline \Phi ) \\
i(\partial_t A_0) \phi &= \O( (\partial_t A_0) \underline \Phi ) \\
\end{align*}
and we can therefore rewrite (MKG-CG) in the caricature form
\begin{equation}\label{caricature}
\begin{split}
\nabla_x \nabla_{x,t} A_0 & = \O(\N_2) + \O(\N_3)\\
\Box \underline \Phi & = \O(\N_0) + \O(\N_1) + \O(\N_3)
\end{split}
\end{equation}
where the bilinear and trilinear nonlinearities $\N_0$, $\N_1$,
$\N_2$, $\N_3$ are defined as the tensors
\begin{align}
\N_0 &:= (\P \Im (\phi \nabla_x \overline{\phi}), \P(\underline{A}) \cdot \nabla_x \phi) \label{n0-def}\\
\N_1 &:= ((\partial_t A_0) \underline \Phi, A_0 \partial_t \underline \Phi)\label{n1-def}\\
\N_2 &:= \underline \Phi \nabla_{x,t} \underline \Phi \label{n2-def}\\
\N_3 &:= \Phi \Phi \Phi.\label{n3-def}
\end{align}

\begin{remark} The cubic nonlinearity $\N_3$ is relatively easy to deal with.  The nonlinearity $\N_2$ would be dangerous if it were present in the ``hyperbolic'' equation for $\Box \underline \Phi$, but fortunately only affects the ``elliptic'' equation for $A_0$, which has better smoothing effects with which to handle this nonlinearity.  The nonlinearity $\N_1$ is tractable due to the high regularity of $A_0$. The null form $\N_0 = \N_0(\underline \Phi, \underline \Phi)$ is perhaps the most interesting.  It is a special case of the more general quadratic form $\N_2$, or more precisely
\begin{equation}\label{ntriv}
\N_0 = \O(\N_2).
\end{equation}
However, one can express $\N_0$ more carefully as
\begin{equation}\label{n1}
\N_0(\underline{\Phi}, \underline{\Phi'}) = \O( |\nabla_x|^{-1}
Q(\underline{\Phi}, \underline{\Phi'}) ) + \O( Q(|\nabla_x|^{-1}
\underline{\Phi}, \underline{\Phi'}) )
\end{equation}
where $Q$ is the null form $Q(\phi, \psi) := \nabla_x \phi \wedge
\nabla_x \psi$, which can be expressed in components as
$$ Q_{jk}(\phi, \psi) := \partial_j \phi \partial_k \psi
- \partial_k \phi \partial_j \psi.$$ See \cite{kl-mac:mkg} for more
details.
\end{remark}

\begin{remark} The equation
$$\Box \Phi = \O( \N_0(\Phi, \Phi) )$$
is sometimes used as a simplified model for (MKG-CG) (and also for
Yang-Mills equations in the Coulomb gauge); see e.g.
\cite{kl-mac:optimal}, \cite{kl-tar:ym}.  However we will not use
this model equation here.
\end{remark}

\section{Function spaces}\label{function-sec}

We now recall some notation for the function spaces we shall use to
control the nonlinear expressions $\N_0,\N_1,\N_2,\N_3$ properly,
which will be useful both for proving Theorem \ref{hs-lwp} and
Proposition \ref{main-prop}.

Given a spacetime function $\phi: \R \times \R^3 \to \C$, we use
$\tilde \phi$ to denote the spacetime Fourier transform
$$ \tilde \phi(\xi, \tau) := \int_{\R \times \R^3} e^{-i (x \cdot \xi + t\tau)} \phi(x,t)\ dx dt.$$
Of course, the spacetime Fourier transform only makes sense if
$\phi$ is defined globally on $\R \times \R^3$ (as opposed to a
spacetime slab such as $[0,T] \times \R^3$).  In practice this
difficulty is avoided by using the spacetime Fourier transform to
define global function spaces, and then define their local
counterparts by restriction.

If $X$ is a Banach space of functions on $\R^3$, we use $L^q_t X$ to
denote the space of functions whose norm
$$ \| u \|_{L^q_t X} := (\int_\R \| u(t) \|_X^q\ dt)^{1/q}$$
is finite, with the usual modifications when $q=\infty$; we also let
$C^0_t X$ be the space of bounded continuous functions from $\R$ to
$X$ with the supremum norm.  In particular, we have the mixed
Lebesgue spaces $L^q_t L^r_x$ and the energy spaces $C^0_t H^s_x
\cap C^1_t H^{s-1}_x$.  These spaces localise to spacetime slabs $I
\times \R^3$ in the obvious manner.

For any $s, b\in \R$, we denote the space\footnote{These spaces are
also known as $X^{s,b}$ spaces in the literature.} $H^{s,b} =
H^{s,b}(\R \times \R^3)$ of spacetime functions on $\R \times \R^3 $
whose norm
$$ \| u \|_{H^{s,b}} := \| \langle \xi \rangle^s \langle |\xi| - |\tau| \rangle^b \tilde u \|_{L^2_\xi L^2_\tau}$$
is finite. We observe the trivial inclusions $H^{s_2,b_2} \subseteq
H^{s_1,b_1}$ whenever $s_2 \geq s_1$ and $b_2 \geq b_1$.

If $I$ is a bounded time interval, we define the restricted space
$H^{s,b}_I = H^{s,b}(I \times \R^3)$ to consist of the restriction
of $H^{s,b}$ functions to the spacetime slab $I \times \R^3$, with
norm
$$ \| u \|_{H^{s,b}_I} := \inf \{ \| v \|_{H^{s,b}}: v \in H^{s,b}, v|_{I \times \R^3} \equiv u \}.$$

We now formalize the well-known fact that $H^{s,1/2+}$ functions are
``averages'' of free $H^s_x$ solutions to the wave equation (see
e.g. \cite[Proposition 7]{selberg:thesis} or \cite[Lemma 2.9]{tao}).

\begin{lemma}[$H^{s,b}$ decomposes into free solutions]\label{wave-general}  Let $\phi \in H^{s,b}$ for some $b>1/2$.  Then for each $\lambda \in \R$ there exists a global solution $\phi_\lambda$ to the free wave equation $\Box \phi_\lambda = 0$ with $\| \phi_\lambda[t]\|_{H^s_x \times H^{s-1}_x} \lesssim 1$ for all $t$, and a co-efficient $a(\lambda) \in \R$, such that
$$ \phi(t) = \int_\R a(\lambda) e^{i t \lambda} \phi_\lambda(t)\ d\lambda$$
for all $t$, and such that
$$\|a\|_{L^1_\lambda} \lesssim \| \langle \lambda \rangle^b a\|_{L^2_\lambda} \lesssim \| \phi \|_{H^{s,b}}$$
where the implicit constant can depend on $b$.
\end{lemma}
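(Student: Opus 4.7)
The plan is to decompose the spacetime Fourier transform $\tilde\phi$ by foliating $\R\times\R^3$ into translates of the two branches of the light cone, and to recognize each slice of the foliation as the Fourier transform of a free wave modulated by $e^{it\lambda}$. In particular, under the substitution $\tau = \pm|\xi| + \lambda$ the $H^{s,b}$ weight becomes exactly $\langle\xi\rangle^{2s}\langle\lambda\rangle^{2b}$, which is precisely the weight needed to produce the claimed $L^1_\lambda$ and $L^2_\lambda$ bounds on the coefficient $a(\lambda)$.

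Concretely, I would first split $\tilde\phi = \tilde\phi_+ + \tilde\phi_-$ according to the sign of $\tau$. For the positive-$\tau$ piece I perform the change of variables $\tau = |\xi| + \lambda$, so that $|\tau| - |\xi| = \lambda$, and set
$$g^+(\lambda,\xi) := \tilde\phi(\xi,|\xi|+\lambda)\,\mathbf{1}_{\tau > 0}, \qquad a^+(\lambda) := \|\langle\xi\rangle^s g^+(\lambda,\cdot)\|_{L^2_\xi}.$$
Let $\phi^+_\lambda$ be the right-moving free wave defined by
$$\phi^+_\lambda(t,x) := \frac{1}{a^+(\lambda)}\int_{\R^3} e^{i(x\cdot\xi + t|\xi|)}\, g^+(\lambda,\xi)\, d\xi;$$
this is a unit-mass vector in $H^s_x\times H^{s-1}_x$ uniformly in $t$, because on right-movers $\partial_t$ acts as the Fourier multiplier $i|\xi|$ and so costs exactly one spatial derivative. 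Fourier inversion on the $\tau$ variable then gives $\phi_+(t,x) = \int_\R e^{it\lambda}\, a^+(\lambda)\, \phi^+_\lambda(t,x)\, d\lambda$. An entirely analogous construction, using the parametrization $\tau = -|\xi| + \lambda$, yields left-moving free waves $\phi^-_\lambda$, coefficients $a^-(\lambda)$, and the identity $\phi_-(t,x) = \int_\R e^{it\lambda}\, a^-(\lambda)\, \phi^-_\lambda(t,x)\, d\lambda$. I then combine the two branches by setting $a(\lambda) := a^+(\lambda) + a^-(\lambda)$ and
$$\phi_\lambda := \frac{a^+(\lambda)\phi^+_\lambda + a^-(\lambda)\phi^-_\lambda}{a(\lambda)}$$
(with $\phi_\lambda \equiv 0$ wherever $a(\lambda) = 0$), which is a free solution with $\|\phi_\lambda[t]\|_{H^s_x\times H^{s-1}_x}\lesssim 1$ and $\phi(t) = \int_\R a(\lambda) e^{it\lambda}\phi_\lambda(t)\, d\lambda$, as claimed.

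For the norm estimates, Plancherel combined with the change of variables gives
$$\int_\R \langle\lambda\rangle^{2b}\, a^+(\lambda)^2\, d\lambda \;=\; \iint_{\tau > 0} \langle\xi\rangle^{2s}\, \langle|\tau|-|\xi|\rangle^{2b}\, |\tilde\phi(\xi,\tau)|^2\, d\tau\, d\xi \;\le\; \|\phi\|_{H^{s,b}}^2,$$
and likewise for $a^-$, so the triangle inequality yields $\|\langle\lambda\rangle^b a\|_{L^2_\lambda} \lesssim \|\phi\|_{H^{s,b}}$. The remaining bound $\|a\|_{L^1_\lambda} \lesssim \|\langle\lambda\rangle^b a\|_{L^2_\lambda}$ is immediate from Cauchy--Schwarz against the weight $\langle\lambda\rangle^{-b}\in L^2_\lambda$, and this is the only place the hypothesis $b > 1/2$ enters. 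There is no real obstacle in the argument beyond bookkeeping: handling the restricted ranges of $\lambda$ that arise from the constraints $\tau > 0$ or $\tau < 0$ (which is done by extending $g^\pm$ by zero), and blending the two cone branches into a single family $\phi_\lambda$ without destroying the uniform $H^s_x\times H^{s-1}_x$ bound (which is done by the weighted average above).
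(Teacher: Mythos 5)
Your proposal is correct and follows essentially the same route as the paper: foliate the spacetime Fourier support by translates of the light cone via $\tau=\pm|\xi|+\lambda$, take $a(\lambda)$ to be the $\langle\xi\rangle^s$-weighted $L^2_\xi$ norm of each slice, and obtain the $L^1_\lambda$ bound from Cauchy--Schwarz using $b>1/2$. The only difference is cosmetic: the paper reduces to the branch $\tau\geq 0$ "without loss of generality," whereas you treat both branches explicitly and blend them by a weighted average, which is just the bookkeeping the paper leaves implicit.
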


\begin{proof}
Without loss of generality we may assume that the spacetime Fourier
transform $\tilde \phi$ is supported on the upper half-space $\{
(\tau,\xi): \tau \geq 0 \}$.  We then write
$$ \widetilde \phi(\tau,\xi) = \int_\R \widetilde \phi(|\xi|+\lambda, \xi) \delta(\tau - \lambda - |\xi|)\ d\lambda$$
where $\delta$ is the Dirac delta.  If we then define
$$ a(\lambda) := \| \langle \xi \rangle^s \widetilde \phi(|\xi|+\lambda, \xi) \|_{L^2_\xi}$$
and
$$ \widetilde \phi_\lambda(\tau,\xi) := \frac{1}{a(\lambda)} \delta(\tau - |\xi|) \widetilde \phi(|\xi|+\lambda, \xi)$$
we see that all the relevant properties are easily verified except
perhaps for the $L^1_\lambda$ bound on $a$, which we compute using
Cauchy-Schwarz:
$$ \| a\|_{L^1_\lambda} \lesssim \| \langle \lambda \rangle^b a\|_{L^2_\lambda} = \| \langle \tau-|\xi| \rangle^b \langle \xi \rangle^s \widetilde \phi \|_{L^2_\tau L^2_\xi} = \| \phi \|_{H^{s,b}}.$$
\end{proof}

As a particular consequence of this lemma, we see that if one can
imbed $H^s_x \times H^{s-1}_x$ free solutions in a spacetime Banach
space $X$ which is invariant under time modulations $\phi(t) \mapsto
\phi(t) e^{i t\lambda}$, then one can also imbed $H^{s,1/2+}$
solutions into the same space.  In particular we have
\begin{equation}\label{hsd-energy}
H^{s,1/2+} \subseteq L^\infty_t H^s_x
\end{equation}
for any $s \in \R$.  Also, from Strichartz' estimate (see e.g.
\cite{tao:keel}, \cite{sogge:wave}, and the references therein) and
Lemma \ref{wave-general} we have
\begin{equation}\label{hsd-strichartz}
H^{s,1/2+} \subseteq L^q_t L^r_x
\end{equation}
whenever
\begin{equation}
\begin{array}{l}
s \geq 0 \\
\frac{1}{q} + \frac{1}{r} \leq \frac{1}{2}\\
2 < q  \leq \infty \\
\frac{1}{q} + \frac{3}{r} \geq \frac{3}{2} - s
\end{array}
\end{equation}
except at the endpoint $(q,r,s) = (2,\infty,0)$, where the estimate
is known to fail (see \cite{klainerman:nulllocal}).  If time is
localized to an interval, one can also use H\"older in time to lower
the $q$ index.

Finally, we recall

\begin{lemma}[Energy estimate] \cite[Theorem 12]{selberg:thesis}  For any time $t_0$,
any interval $I$ of width $O(1)$ containing $t_0$, any $0 \leq
\sigma \leq 1-b$, $1
> b > 1/2$, and $s \in \R$, we have
\begin{equation}\label{energy}
\| u \|_{H^{s,b}_I} + \| \nabla_{x,t} u \|_{H^{s-1,b}_I} \lesssim \|
u[t_0] \|_{H^s_x \times H^{s-1}_x} + |I|^{\sigma/2} \| \Box u
\|_{H^{s-1,b-1+\sigma}_I}
\end{equation}
whenever the right-hand side is finite.  The implied constant of
course depends on $\sigma, b, s$.
\end{lemma}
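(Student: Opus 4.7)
The plan is to execute the standard $X^{s,b}$ energy estimate for the wave equation. Split $u = u_0 + u_1$, where $u_0$ is the free solution $\Box u_0 = 0$ with $u_0[t_0] = u[t_0]$, and $u_1 := u - u_0$ satisfies $\Box u_1 = \Box u$ with vanishing data at $t_0$. Pick a smooth cutoff $\chi \in C^\infty_c(\R)$ supported in a neighborhood of $I$ of width $O(1)$ and identically $1$ on $I$. Since $\chi u \equiv u$ on $I$, the restricted norms are controlled by the global norms $\|\chi u_j\|_{H^{s,b}}$ and $\|\nabla_{x,t}(\chi u_j)\|_{H^{s-1,b}}$, so it suffices to bound each of these.

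For the free piece, write $u_0(t) = \cos((t-t_0)|\nabla_x|) f + |\nabla_x|^{-1}\sin((t-t_0)|\nabla_x|) g$ with $(f,g) := u[t_0]$. The spacetime Fourier transform of $\chi(t-t_0) u_0(t)$ is then a linear combination of terms of the form $\hat\chi(\tau \mp |\xi|)\, \hat f(\xi)$ and $\hat\chi(\tau \mp |\xi|)\, \hat g(\xi)/|\xi|$. Plugging into the definition of the $H^{s,b}$ norm and changing variables to $\lambda := \tau \mp |\xi|$, the $\xi$- and $\lambda$-integrals decouple; using that $\hat\chi$ is Schwartz (hence in $L^2(\langle\lambda\rangle^{2b}\,d\lambda)$ for any finite $b$), one obtains $\|\chi u_0\|_{H^{s,b}} \lesssim \|u[t_0]\|_{H^s_x \times H^{s-1}_x}$. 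The derivative bound follows identically, since $\nabla_{x,t}$ brings down a factor of $\xi$ or of $\tau = \pm|\xi|$ on the cone, trading one power of $\langle\xi\rangle$ for $H^{s-1}$ on the left.

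For the Duhamel piece, first extend $\Box u$ off $I \times \R^3$ to a global $F$ with $\|F\|_{H^{s-1,b-1+\sigma}} \lesssim \|\Box u\|_{H^{s-1,b-1+\sigma}_I}$, and define $u_1(t) := \int_{t_0}^t |\nabla_x|^{-1}\sin((t-t')|\nabla_x|) F(t')\,dt'$. The task reduces to the inhomogeneous $X^{s,b}$ estimate
$$\|\chi u_1\|_{H^{s,b}} \lesssim |I|^{\sigma/2}\, \|F\|_{H^{s-1,b-1+\sigma}}.$$
The proof proceeds by a dyadic decomposition $F = \sum_\mu F_\mu$ according to the size $\mu$ of $\langle|\tau|-|\xi|\rangle$. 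In the high-modulation range $\mu \gtrsim |I|^{-1}$, one inverts $\Box$ directly on the Fourier side, gaining a factor of $\mu^{-1}$ which (via Cauchy--Schwarz in the dyadic parameter) absorbs the $|I|^{\sigma/2}$ on the right-hand side. In the low-modulation range $\mu \lesssim |I|^{-1}$, one Taylor expands $\sin((t-t')|\nabla_x|)/|\nabla_x|$ in $(t-t')$, whose modulus is $O(|I|)$ on $\mathrm{supp}\,\chi$; the Duhamel integral then reduces to convolutions in $t$ that gain a power of $|I|$ upon H\"older's inequality. Interpolating between the endpoint $\sigma = 0$ (the classical $X^{s,b}$ Duhamel bound) and $\sigma = 1-b$ yields the precise $|I|^{\sigma/2}$ factor. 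The gradient estimate for $u_1$ follows by the same argument applied to $\nabla_{x,t} u_1$, since $\nabla_{x,t}$ commutes with both $\Box$ and the Duhamel kernel.

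The only delicate point---and the main technical obstacle---is the interaction between the time cutoff $\chi$ and the modulation weights $\langle|\tau|-|\xi|\rangle^b$, since convolution against $\hat\chi$ smears the modulation variable and can in principle move mass across dyadic annuli. This is handled by the rapid decay of $\hat\chi$ at infinity combined with a Schur-test estimate for the resulting nearly-diagonal dyadic sum; the details are carried out carefully in \cite{selberg:thesis}.
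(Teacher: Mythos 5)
First, note that the paper does not actually prove this lemma: it is quoted verbatim from \cite[Theorem 12]{selberg:thesis}, so your sketch can only be judged against the standard $H^{s,b}$ arguments it invokes. Your skeleton -- free/Duhamel splitting, a time cutoff, a decomposition of the source according to the modulation $\mu=\langle|\tau|-|\xi|\rangle$ with threshold $|I|^{-1}$, and interpolation in $\sigma$ -- is indeed the standard route, and the free piece and the high-modulation piece are essentially right: dividing by the symbol gains $\mu^{-1}(|\tau|+|\xi|)^{-1}$, which against the $H^{s-1,b-1+\sigma}$ weight yields $\mu^{-\sigma}\lesssim|I|^{\sigma}\leq|I|^{\sigma/2}$. (Even there you omit a needed step: inverting $\Box$ on the Fourier side produces a solution with the wrong data at $t_0$, so one must subtract a free wave whose data is controlled via the trace embedding available for $b>1/2$.)

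The genuine gap is in the low-modulation case, which is exactly where the factor $|I|^{\sigma/2}$ must be produced. You take $\chi$ supported on a neighborhood of $I$ of width $O(1)$, yet your argument rests on the claim that $|t-t'|=O(|I|)$ on $\mathrm{supp}\,\chi$; this is false for that cutoff, and it is the heart of the matter when $|I|$ is small, which is how the lemma is used in the proof of Theorem \ref{hs-lwp} (see \eqref{e-start}). Moreover, even with a cutoff adapted to scale $|I|$, Taylor expanding $\sin((t-t')|\nabla_x|)/|\nabla_x|$ in powers of $(t-t')$ cannot work: low modulation does not mean low spatial frequency, so $(t-t')|\nabla_x|$ is unbounded on the relevant region and the expansion has no smallness. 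The correct expansion is in the modulation variable, i.e. write $e^{it'\tau'}=e^{\pm it'|\xi|}e^{it'\lambda}$ with $\lambda=\tau'\mp|\xi|$ and expand $e^{it'\lambda}$, which is legitimate once time is localized at scale $|I|$ and $|\lambda|\lesssim|I|^{-1}$. More simply, one can avoid kernel expansions altogether: combine the global cutoff Duhamel bound $\|\eta\int_{t_0}^{t}|\nabla_x|^{-1}\sin((t-t')|\nabla_x|)F(t')\,dt'\|_{H^{s,b}}\lesssim\|F\|_{H^{s-1,b-1}}$ (valid for $b>1/2$) with the time-localization gain $\|F\|_{H^{s-1,b-1}_I}\lesssim|I|^{\sigma}\|F\|_{H^{s-1,b-1+\sigma}_I}$, which holds because both exponents lie in $(-1/2,1/2)$; the smallness in $|I|$ enters through the source being restricted to $I$, not through the support of $\chi$, and it in fact gives $|I|^{\sigma}$, better than claimed. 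Your interpolation between $\sigma=0$ and $\sigma=1-b$ would also work, but only after the $\sigma=1-b$ endpoint is established correctly (truncate the source sharply to $I$, harmless at modulation exponent $0$, write the Duhamel integral as a superposition of free waves, and apply Cauchy--Schwarz in $t'$ over an interval of length $|I|$). Finally, deferring ``the only delicate point'' back to \cite{selberg:thesis} leaves the proposal non-self-contained precisely at the step the lemma is supposed to supply.
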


Note the factor of $|I|^{\sigma/2}$ on the right-hand side of
\eqref{energy}; this factor will be  very convenient for the large
data theory.  The fact that $\sigma$ is allowed to be as large as
$1-b$ (rather than $1/2$) allows us to reach $s > \sqrt{3}/2 \approx
.866$ rather than $s > 7/8 = .875$.

\section{Bilinear estimates}\label{bil-sec}

To obtain local well-posedness in $H^s_x$, we shall place
$\underline{\Phi}$ in $H^{s,s-}$, $\nabla_{x,t} \underline \Phi$ in
$H^{s-1,s-}$, and $\nabla_{x,t} A_0$ in $H^{1/2+,0}$ (compare with
\cite{cuccagna}).

We are now ready to prove the main bilinear estimates needed to
handle the nonlinear expressions $\N_0$, $\N_1$, $\N_2$.

\begin{proposition}[Bilinear estimates]\label{bilinear}  Let $3/4 < s < 1$.  Then we have the estimates
\begin{align}
\| \eta(t) \phi \nabla_{x,t} \psi \|_{H^{s'+s''-2, 0}} &\lesssim \| \phi \|_{H^{s', 3/4+}} \| \nabla_{x,t} \psi \|_{H^{s''-1, 3/4+}} \label{no-null}\\
\| \eta(t) (\partial_t A_0) \phi \|_{H^{s-1, s-1}} &\lesssim \| \nabla_{x,t} A_0 \|_{H^{1/2+, 0}} \| \phi \|_{H^{s, 3/4+}} \label{a0t-phi}\\
\| \eta(t) A_0 (\partial_t \phi) \|_{H^{s-1, s-1}} &\lesssim \| \nabla_{x,t} A_0 \|_{H^{1/2+, 0}} \| \partial_t \phi \|_{H^{s-1, 3/4+}}  \label{a0-phit}\\
\| \eta(t) \N_0(\phi, \psi) \|_{H^{s-1, s-1}} &\lesssim \| \phi
\|_{H^{s, 3/4+}} \| \psi \|_{H^{s, 3/4+ }} \label{null}
\end{align}
where $\eta$ is any bump function and $s \leq s',s'' \leq 1$ are
exponents which are not both equal to 1, and $A_0$, $\phi$, $\psi$
are arbitrary functions for which the left-hand side makes sense.
(Of course, the implicit constants depend on $s, \eta, s', s''$.)
\end{proposition}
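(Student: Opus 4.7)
My plan is to use Lemma~\ref{wave-general} to decompose each input of the form $H^{s,3/4+}$ (or $H^{s''-1,3/4+}$) as an $L^1_\lambda$-superposition of free waves $\phi_\lambda$ with bounded $H^{s}_x \times H^{s-1}_x$ data. This reduces each of the four estimates to a bilinear estimate for products of free waves, multiplied by the time cutoff $\eta(t)$ — which also absorbs the modulation phases $e^{it\lambda}$ produced by the decomposition. The resulting free-wave estimates are then closed by Strichartz \eqref{hsd-strichartz}, H\"older's inequality in space and time, the Sobolev multiplication law \eqref{sobolev-mult}, and — for the null form estimate \eqref{null} — the Klainerman--Machedon null form calculus.

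For the non-null estimate \eqref{no-null} the target reduces to $L^2_t H^{s'+s''-2}_x$, and I would reach it by placing $\eta(t) u$ and $\nabla_{x,t} v$ in complementary $L^4_t L^p_x$ spaces via Strichartz, combining with H\"older in time to land in $L^2_t L^q_x$, and finishing with Sobolev embedding into $H^{s'+s''-2}_x$. The hypothesis $s',s'' \geq s > 3/4$ with not both equal to $1$ guarantees that this Sobolev embedding leaves a strictly positive margin. The mixed estimates \eqref{a0t-phi} and \eqref{a0-phit} are easier once one exploits the substantial extra spatial smoothness of $A_0$: the factor $\nabla_{x,t} A_0 \in H^{1/2+,0}$ is placed directly in $L^2_t H^{1/2+}_x$ by Plancherel in $\tau$ (no Lemma~\ref{wave-general} is needed for this factor, which has $b=0$), while the remaining factor $\phi$ or $\partial_t \phi$ is decomposed into free waves. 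Since the target $b$-weight $s-1<0$ is weaker than $b=0$, dualizing against $H^{1-s,1-s}$ (which \emph{is} amenable to Lemma~\ref{wave-general}) gives additional flexibility to pick up a $|I|^{0+}$ gain from H\"older in time.

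The null form estimate \eqref{null} is the crux. Using \eqref{n1}, I would write $\N_0(\phi,\psi)$ schematically as $|\nabla_x|^{-1} Q(\phi,\psi) + Q(|\nabla_x|^{-1}\phi,\psi)$, where $Q$ is the antisymmetric wedge-product null form. For two free waves with spatial frequencies $\xi_1,\xi_2$ the symbol of $Q$ is controlled by $|\xi_1 \wedge \xi_2|$, and the standard geometric identity for waves on the forward cone,
$$ |\xi_1 \wedge \xi_2|^2 \;\lesssim\; |\xi_1|\,|\xi_2|\,(|\xi_1|+|\xi_2|)\,\big\langle |\tau_1+\tau_2|-|\xi_1+\xi_2|\big\rangle, $$
exchanges this angular factor for an off-cone modulation of one of the three relevant cones. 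This, combined with the leading $|\nabla_x|^{-1}$, yields a full derivative of improvement over the trivial bound $\N_0 = \O(\N_2)$ from \eqref{ntriv}, which is precisely what is needed to compensate the $\nabla_{x,t}$ derivative present in \eqref{no-null} and absorb the slightly worse target $H^{s-1,s-1}$. The calculation is then the standard Klainerman--Machedon bilinear null form bound in the concrete $b$-indices used here, essentially as in \cite{kl-mac:optimal}, \cite{kl-mac:mkg}, \cite{cuccagna}.

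The main obstacle I anticipate is the bookkeeping in the null form estimate in the various frequency regimes — in particular, handling the $|\nabla_x|^{-1}$ singularity when one of the inputs is at low frequency, where one uses the inhomogeneous character of the norms together with the margin afforded by $s > 3/4$ to close the estimate; the high--high to low, high--low, and low--low cases must be treated separately with the off-cone gain distributed to whichever factor (or the output) is most convenient.
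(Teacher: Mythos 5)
Your reduction to free waves is the right first move, but two of your steps do not close as described. First, for \eqref{no-null} you propose to finish with linear Strichartz plus H\"older plus Sobolev multiplication. This cannot work as stated: the factor $\nabla_{x,t}\psi$ carries spatial regularity $s''-1\le 0$, and \eqref{hsd-strichartz} requires nonnegative regularity (at regularity $0$ the only available space is $L^\infty_t L^2_x$), so there is no $L^4_t L^p_x$ Strichartz space in which to place it. More fundamentally, even after redistributing derivatives, the high-high interaction defeats any argument built from H\"older of linear Strichartz norms together with Sobolev/Bernstein on the output: with both free waves at frequency $N$ and the product localized to an output frequency $N^\alpha$ with $\frac{2s-1}{2s-1/2}<\alpha<1$, admissibility forces a total derivative cost of at least $\frac52-\frac3p$ with $\frac3p\le\frac32$ when landing in $L^2_tL^p_x$, and balancing this against the Bernstein gain $N^{\alpha\cdot 3(\frac1p-\frac12)}$ and the weight $N^{\alpha(2-2s)}$ shows a loss of a positive power of $N$ for every $s<1$. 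The missing ingredient is the genuinely bilinear estimate for products of free waves, $\|\phi\nabla_x\psi\|_{\dot H^{s'+s''-2,0}}\lesssim\|\phi[0]\|_{\dot H^{s'}\times\dot H^{s'-1}}\|\psi[0]\|_{\dot H^{s''}\times\dot H^{s''-1}}$ of Foschi--Klainerman (\cite[Theorem 1.1]{damiano:klainerman}), which exploits the transversality of the two light cones and is not a consequence of linear estimates; this is exactly what the paper invokes before running the transference and low-frequency arguments.

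Second, your claim that the cutoff $\eta(t)$ ``absorbs the modulation phases $e^{it\lambda}$'' produced by Lemma \ref{wave-general} is only correct when the target $b$-index is zero, as in \eqref{no-null}. For the crux estimate \eqref{null} the target is $H^{s-1,s-1}$ with $s-1\neq 0$, and this norm is \emph{not} modulation invariant: one only has $\|e^{it\lambda}u\|_{H^{s-1,s-1}}\lesssim\langle\lambda\rangle^{1-s}\|u\|_{H^{s-1,s-1}}$, so each modulation costs a factor $\langle\lambda\rangle^{1-s}$, and the transference only closes because $\|\langle\lambda\rangle^{1-s}a\|_{L^1_\lambda}\lesssim\|\langle\lambda\rangle^{3/4+}a\|_{L^2_\lambda}$, which uses precisely $3/4+>\frac12+(1-s)$, i.e.\ $s>3/4$ and the $b$-index $3/4+$ in the hypotheses. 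Without this accounting your reduction of \eqref{null} to the free-wave Klainerman--Machedon bound is incomplete; note the same issue would arise in \eqref{a0t-phi}, \eqref{a0-phit} if you dualized into $H^{1-s,1-s}$ as you suggest, whereas the paper sidesteps it there entirely by embedding $H^{s-1,s-1}\supseteq L^2_tH^{s-1}_x$ is false but rather using $H^{s-1,s-1}\subseteq L^2_tH^{s-1}_x$ as the target and $H^{s,3/4+}\subseteq C^0_tH^s_x$ for the inputs, reducing to two fixed-time Sobolev product estimates.
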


These estimates were essentially proven in \cite{cuccagna}, but we
sketch a proof here based on the bilinear estimates in
\cite{damiano:klainerman}.  For local existence we need to take
$s'=s''=s$, but for global existence we will also need one of $s'$,
$s''$ to equal 1 instead.

\begin{proof}
We first prove \eqref{no-null}.  From \cite[Theorem
1.1]{damiano:klainerman} and the assumption $3/4 < s < 1$, we
have\footnote{More generally, one has $\| \phi \psi \|_{\dot
H^{s,b}} \lesssim \| \phi[0] \|_{\dot H^{s'} \times \dot H^{s'-1}}
\| \psi[0] \|_{\dot H^{s''} \times \dot H^{s''-1}}$ whenever
$s+b=s'+s''-1$, $s'+s'' > 1/2$, $s > -1$, $b \geq 0$, and $s \leq
\min(s',s'')$, with at least one of the latter two inequalities
being strict.  See \cite{damiano:klainerman}.}
$$
\| \phi \nabla_{x} \psi \|_{\dot H^{s'+s''-2, 0}} \lesssim \|
\phi[0] \|_{\dot H^{s'} \times \dot H^{s'-1}} \| \psi[0] \|_{\dot
H^{s''} \times \dot H^{s''-1}}$$ for all global solutions $\Box \phi
= \Box \psi = 0$ to the wave equation, and $\| u \|_{\dot H^{s,b}}
:= \| |\xi|^s ||\xi|-|\tau||^b \tilde u \|_{L^2_\tau L^2_\xi}$ is
the homogeneous $L^2_x$ norm.

Now we prove the variant
\begin{equation}\label{eta-variant}
\| \eta(t) \phi \nabla_{x} \psi \|_{H^{s'+s''-2, 0}} \lesssim \|
\phi[0] \|_{H^{s'} \times H^{s'-1}} \| \psi[0] \|_{H^{s''} \times
H^{s''-1}}.
\end{equation}
If $\phi$ and $\psi$ both have Fourier support on the region $|\xi|
\gtrsim 1$ then this follows from the previous estimate, since
$s'+s''-2$ is negative, and the $\dot H^{s'+s''-2,0}$ norm controls
the $H^{s'+s''-2,0}$ norm, and the $\eta(t)$ cutoff is harmless. Now
suppose that $\phi$ has Fourier support in the region $|\xi|
\lesssim 1$.  Estimating $\nabla_{x,t} \psi$ in $C^0_t H^{s''-1}$
and $\phi$ in $C^0_t H^{10}$ (for instance) it suffices by a
H\"older in time to show the estimate
$$ \| f g \|_{H^{s'+s''-2}_x} \lesssim \| f \|_{H^{10}_x} \| g\|_{H^{s''-1}_x}.$$
But this follows from \eqref{sobolev-mult}.  A similar argument
holds for $\psi$ (in fact here it is easier as the derivative is in
a more favorable location).  This proves \eqref{eta-variant}.

Now suppose that $\phi$ is an arbitrary spacetime function, while
$\psi$ still solves the wave equation $\Box \psi = 0$.  We claim the
estimate

$$
 \| \phi \nabla_{x} \psi \|_{H^{s^{'}+s^{''}-2, 0}}
\lesssim \| \phi \|_{H^{s^{'}, 3/4+}} \| \psi[0] \|_{H^s_x \times
H^{s-1}_x}. 
$$
To see this, we use Lemma \ref{wave-general} to write $\phi(t) =
\int_\R a(\lambda) e^{i t\lambda} \phi_\lambda(t)\ d\lambda$.  From
\eqref{eta-variant}, Minkowski's inequality and the observation that
the norm 
$H^{s^{'} + s^{''} -2, 0}$
is invariant under multiplication by
$e^{it\lambda}$, we have

$$
 \| \phi \nabla_{x,t} \psi \|_{H^{s^{'} + s^{''} -2, 0}}
\lesssim \| a \|_{L^1_\lambda} \| \psi[0] \|_{H^s_x \times
H^{s-1}_x} 
$$
and the claim follows from Lemma \ref{wave-general}.  If we then
apply similar reasoning to $\psi$ we obtain 
\begin{equation}\label{john}
\| \eta(t) \phi \nabla_{x} \psi \|_{H^{s'+s''-2, 0}} \lesssim \| \phi \|_{H^{s', 3/4+}} \| \psi \|_{H^{s'', 3/4+}}.
\end{equation}
To establish \eqref{no-null} in the case where $\psi$ has Fourier support in the region $|\xi| \gtrsim 1$, one applies \eqref{john} with $\psi$ replaced by $\nabla_x \Delta^{-1} \nabla_{t,x} \psi$ and then takes traces.  The final remaining case of \eqref{no-null} to check is when $\psi$ has Fourier support in the region $|\xi| \lesssim 1$.  In that case, the norm $\| \nabla_{t,x} \psi \|_{H^{s''-1,3/4+}}$ controls $\| \nabla_{t,x} \psi \|_{L^\infty_t H^{100}_x}$ (say), while $\| \phi \|_{H^{s',3/4+}}$ controls $\| \phi \|_{L^\infty_t H^{s'}_x}$, and the claim follows from standard Sobolev product estimates.

Now, we prove \eqref{null}.  From \cite[Corollary
13.4]{damiano:klainerman} and \eqref{n1}, and the hypothesis $3/4 <
s < 1$, we have\footnote{The main idea of this corollary is to
exploit the heuristic $Q(\phi, \psi) \sim |\nabla_x|^{1/2} D_-^{1/2}
(|\nabla_x|^{1/2} \phi) (|\nabla_x|^{1/2} \psi)$, where $D_-$ is the
spacetime Fourier multiplier with weight $||\xi|-|\tau||$; this
allows one to reduce the null form estimate to a product estimate
similar to the one used to prove \eqref{no-null}.  See
\cite{damiano:klainerman}, and also Lemma \ref{nf} below.}
$$
\| \N_0(\phi, \psi) \|_{\dot H^{s-1, s-1}} \lesssim \| \phi[0]
\|_{\dot H^s_x \times \dot H^{s-1}_x} \| \psi[0] \|_{\dot H^s_x
\times \dot H^{s-1}_x}$$ when $\Box \phi = \Box \psi = 0$.  We now
prove the variant estimate
\begin{equation}\label{var}
\| \eta(t) \N_0(\phi, \psi) \|_{H^{s-1, s-1}} \lesssim \| \phi[0]
\|_{H^s_x \times H^{s-1}_x} \| \psi[0] \|_{H^s_x \times H^{s-1}_x}.
\end{equation}
Again this estimate follows from the previous when $\phi$ and $\psi$
both have Fourier support in the region $|\xi| \gtrsim 1$ (note that
multiplication by $\eta(t)$ is bounded on any $H^{s,b}$ space).  Now
suppose $\phi$ has Fourier support on $|\xi| \lesssim 1$. Crudely
writing $\N_0(\phi,\psi) = \O( \phi \nabla_x \psi )$ and estimating
the $H^{s-1,s-1}$ norm by the $L^2_t H^{s-1}_x$ norm, we argue as
with \eqref{no-null}, the only difference being that $-1/2+$ has
been replaced by $s-1$.  Similarly for $\psi$.

Now we invoke Lemma \ref{wave-general} again.  We have to be a bit
more careful because the space $H^{s-1,s-1}$ is not invariant under
multiplication by $e^{it\lambda}$, however from the (rather crude)
elementary inequality $\langle a + b \rangle^{s-1} \lesssim \langle
a \rangle^{1-s} \langle b \rangle^{s-1}$ we do have the estimate
$$ \| e^{it\lambda} u \|_{H^{s-1,s-1}} \lesssim \langle \lambda \rangle^{1-s} \| u \|_{H^{s-1,s-1}}.$$
So if $\phi$ does not solve the free wave equation, and we decompose
as in Lemma \ref{wave-general}, then
$$ \| \eta(t) \N_0(\phi, \psi) \|_{H^{s-1, s-1}} \lesssim
\int_\R \langle \lambda \rangle^{1-s}  |a_\lambda(t)| \| \eta(t)
\N_0(\phi_\lambda, \psi) \|_{H^{s-1,s-1}}\ d\lambda.$$ Using
\eqref{var} we thus have
$$ \| \eta(t) \N_0(\phi, \psi) \|_{H^{s-1, s-1}} \lesssim \| \langle \lambda \rangle^{1-s} a \|_{L^1_\lambda} \| \psi[0] \|_{H^s_x \times H^{s-1}_x}.$$
However since $s > 3/4$, we have $3/4+ > 1/2 + (1-s)$, so we can
estimate
$$ \| \langle \lambda \rangle^{1-s} a \|_{L^1_\lambda} \lesssim \| \langle \lambda \rangle^{3/4+} a \|_{L^2_\lambda} \lesssim \| \phi \|_{H^{s,3/4+}}.$$
By arguing similarly for $\psi$ we obtain \eqref{null}.

Finally, we prove \eqref{a0t-phi}, \eqref{a0-phit}.  From the
embeddings $H^{s,3/4+} \subseteq C^0_t H^s_x$, $H^{s-1,3/4+}
\subseteq C^0_t H^{s-1}_x$ (from \eqref{hsd-energy}) and the trivial
embedding $H^{s-1,s-1} \subseteq L^2_t H^{s-1}_x$, it suffices to
show the spatial product estimates
$$ \| f g \|_{H^{s-1}_x} \lesssim \| f \|_{H^{1/2+}_x} \| g \|_{H^s_x}$$
and
$$ \| f g \|_{H^{s-1}_x} \lesssim \| \nabla_x f \|_{H^{1/2+}_x} \| g \|_{H^{s-1}_x}.$$
The first estimate follows directly from \eqref{sobolev-mult}.  To
prove the second we use duality to convert it to
$$ \| f h \|_{H^{1-s}_x} \lesssim \| \nabla_x f \|_{H^{1/2+}_x} \| h \|_{H^{1-s}}.$$
If $f$ has Fourier support in the region $|\xi| \gtrsim 1$ then this
again follows from \eqref{sobolev-mult}, so we may assume $f$ has
support in the region $|\xi| \lesssim 1$.  But then this follows by
applying the fractional Leibnitz rule for $\langle \nabla_x
\rangle^{1-s}$ and the Sobolev embedding $\| \langle \nabla_x
\rangle^{1-s} f \|_{L^\infty_x} \lesssim \| \nabla_x f
\|_{H^{1/2+}_x}$.
\end{proof}

\section{Local existence}\label{local-sec}

We are now finally ready to prove the local existence result,
Theorem \ref{hs-lwp}.

\begin{remark} This result is essentially in \cite{cuccagna}, but for our application we need
local well-posedness for large data as well as small.  One cannot
simply rescale large data to be small because the $L^2_x$ norm is
supercritical in (MKG-CG).  By localizing in time and modifying the
``$b$'' index of the $H^{s,b}$ norms we can control the
``hyperbolic'' component of the large data evolution for short
times.  However for the ``elliptic'' component of the evolution
localizing in time does not help.  One might try localizing in
space, but this is tricky because the non-local Coulomb gauge has
destroyed finite speed of propagation, and one would probably be
forced to use local Coulomb gauges, cf. \cite{klainerman:yangmills}
and \cite{uhlenbeck}. Fortunately, the variational estimates in
Section \ref{elliptic-sec} will allow us to avoid these
difficulties.
\end{remark}

\begin{proof}[Proof of Theorem \ref{hs-lwp}]
Let $T > 0$ be chosen later.  Let $\Phi[0]$, $\Phi'[0]$ be two
smooth elements of $M\Ball([H^s])$ obeying \eqref{compat}.  Then by
the global well-posedness theory in the introduction we can find
smooth solutions $\Phi$, $\Phi'$ to (MKG-CG) on the slab $[-T,T]
\times \R^3$.  We shall show the estimates
\begin{equation}\label{lipschitz}
\| \Phi - \Phi' \|_{C^0_t [H^s]} \lesssim C(M) \| \Phi[0] - \Phi'[0]
\|_{[H^s]}
\end{equation}
on this slab; the Theorem then follows by a standard limiting
argument, using the remarks at the end of Section \ref{elliptic-sec}
to approximate rough data by smooth data.

Define the norm $\| \Phi \|_X$ on the slab $[-T,T] \times \R^3$ by
\begin{equation}\label{xdef}
 \| \Phi \|_X := \| \underline{\Phi} \|_{H^{s,3/4+}_{[-T,T]}}
+ \| \nabla_{x,t} \underline{\Phi} \|_{H^{s-1,3/4+}_{[-T,T]}} + \eps
\| \nabla_{x,t} A_0 \|_{L^2_t H^{1/2+}_x}
\end{equation}
where $0 < \eps = \eps(M) \ll 1$ is a small number to be chosen
later.  We shall show that
$$ \| \Phi - \Phi' \|_X \lesssim C(M) \| \Phi[0] - \Phi'[0]\|_{[H^s]}$$
which will imply \eqref{lipschitz} by \eqref{hsd-energy} and
\eqref{a0-diff}.

To simplify the exposition we shall just prove the bound
$$ \| \Phi \|_X \lesssim C(M)$$
but the reader may verify that the arguments below can be easily
adapted to differences.

We begin with the $\underline{\Phi}$ component of the $X$ norm.  By
\eqref{energy} and \eqref{caricature} we have
\begin{equation}\label{e-start}
\| \underline{\Phi} \|_{H^{s,3/4+}_{[-T,T]}} + \| \nabla_{x,t}
\underline{\Phi} \|_{H^{s-1,3/4+}_{[-T,T]}} \lesssim M +
T^{(s-3/4+)/2} \sum_{j=0,1,3} \| \N_j \|_{H^{s-1,s-1}_{[-T,T]}}.
\end{equation}
From Proposition \ref{bilinear} (restricted to $[-T,T]$ in the
obvious fashion) we can control the $\N_0$, $\N_1$ nonlinearities:
$$ \| \N_0 \|_{H^{s-1,s-1}_{[-T,T]}} + \|\N_1 \|_{H^{s-1,s-1}_{[-T,T]}}
\lesssim C(\eps) \| \Phi \|_X^2.$$ On the other hand, from
Strichartz \eqref{hsd-strichartz} we have\footnote{It is here that we crucially make use of the hypothesis $s>5/6$.  It is likely that the methods of Cuccagna\cite{cuccagna} can control the cubic terms $\N_3$ by more sophisticated estimates than Strichartz estimates in the larger range $s>3/4$, but we will not need to do so here.  We thank the referee for these points.}
$$ \| \underline{\Phi} \|_{L^6_t L^6_x} \lesssim \| \Phi \|_X$$
Also, we have by Sobolev and \eqref{nab-a0}
\begin{equation}\label{a66}
\begin{split}
\| A_0 \|_{L^6_t L^6_x} &\lesssim \| \nabla_x A_0 \|_{C^0_t L^2_x} \\
&\lesssim \sum_{j=2}^3 ( \| \phi \|_{C^0_t H^s_x} + \| A \|_{C^0_t H^s_x} + \| \phi_t \|_{C^0_t H^{s-1}_x} )^j \\
&\lesssim \sum_{j=2}^3 \| \Phi \|_X^j.
\end{split}
\end{equation}
Combining these estimates we can control the cubic nonlinearity
$\N_3$:
\begin{equation}\label{phi-ss}
\|\N_3\|_{ H^{s-1,s-1}_{[-T,T]}} \lesssim \|\Phi^3\|_{L^2_t L^2_x}
\lesssim \| \Phi \|_{L^6_t L^6_x}^3 \lesssim \sum_{j=3}^9 \| \Phi
\|_X^j.
\end{equation}
Putting all of this together we obtain
\begin{equation}\label{uphi}
\| \underline{\Phi} \|_{H^{s,3/4+}_{[-T,T]}} + \| \nabla_{x,t}
\underline{\Phi} \|_{H^{s-1,3/4+}_{[-T,T]}} \lesssim M +
T^{(s-3/4+)/2} C(\eps,\| \Phi \|_X).
\end{equation}

Now we consider the $A_0$ component of the $X$ norm.  From the
computations in \eqref{a66} we have
$$ \| \nabla_{x,t} A_0 \|_{L^2_t L^2_x} \lesssim \| \Phi \|_X^2$$
so we have
$$ \| \nabla_{x,t} A_0 \|_{L^2_t H^{1/2+}_x} \lesssim \| \Phi \|_X^2 +
\| \nabla_x \nabla_{x,t} A_0 \|_{L^2_t H^{-1/2+}_x}.$$ By
\eqref{caricature} we thus have
$$ \| \nabla_{x,t} A_0 \|_{L^2_t H^{1/2+}_x} \lesssim \| \Phi \|_X^2 +
\| \N_2 \|_{L^2_t H^{-1/2+}_x} + \| \N_3 \|_{L^2_t H^{-1/2+}_x}.$$
We estimate the second term using Proposition \ref{bilinear}
(observing that $s+s-2  > -1/2+$ since $s > 3/4$), and we estimate
the third term by the computations in \eqref{phi-ss}, to obtain
\begin{equation}\label{ao}
\| \nabla_{x,t} A_0 \|_{L^2_t H^{1/2+}_x} \lesssim \| \Phi \|_X^2 +
\| \Phi \|_X^3.
\end{equation}
Combining this with our previous bound for $\underline{\Phi}$ we
obtain
$$ \| \Phi \|_X \lesssim M +
T^{(s-3/4+)/2} C(\eps,\| \Phi \|_X) + \eps C(\| \Phi \|_X).$$ If we
now choose $\eps = \eps(M)$ sufficiently small, and $T = T(\eps, M)$
sufficiently small, we thus see that there is an absolute constant
$C$ such that
$$ \| \Phi \|_X \leq 2CM \implies \| \Phi \|_X \leq CM.$$
By standard continuity arguments this implies that $\| \Phi \|_X
\leq CM$, as desired.  The adaptation of this scheme to differences
is routine (using \eqref{a0-diff} instead of \eqref{nab-a0}) and is
left to the reader.
\end{proof}

\section{Modified local well-posedness}\label{mod-sec}

To finish the proof of Theorem \ref{main}, it only remains to prove
Proposition \ref{main-prop}.  Fix $A_0$, $\underline{\Phi}$, $t_0$.

From a continuity argument we may assume a priori that
\begin{equation}\label{sup-hamil-2}
\sup_{t \in [t_0 - \delta/2, t_0 + \delta/2]} \H[I\Phi[\lambda t]]
\leq 3.
\end{equation}

Our objective is to control the modified Hamiltonian at times $t$
close to $t_0$. In order to do this we must obtain estimates on
$\Phi$ away from $t_0$. One obvious possibility is to combine
\eqref{sup-hamil-2} and Lemma \ref{hamil-control}; this for instance
will give the estimate
\begin{equation}\label{idphi}
(\partial_t + i IA_0) I \phi \in C^0_t L^2_x
\end{equation}
on the slab $[t_0 - \delta/2, t_0 + \delta/2]$.  However these types
of estimates (which basically place $I \Phi$ in $C^0_t [H^1]$) will
not be the right type of estimates (except for the low frequency
component) for estimating the change in the
Hamiltonian\footnote{Basically, the problem is that Sobolev
embedding in three dimensions does not allow $C^0_t [H^1]$ to
control $L^\infty_x$ norms, so that nonlinearities such as $\Phi
\nabla_x \Phi$ cannot be placed in $L^2_x$ norms.  To get around
this we must use Strichartz embeddings and null form estimates,
which in turn necessitates the use of $H^{s,b}$ spaces.  We remark
that in one dimension one can rely purely on Sobolev embedding and
obtain global well-posedness results for nonlinear wave equations
below $H^1$ without using $H^{s,b}$ or Strichartz norms; see
\cite{borg:book}.}; it turns out that we need estimates in $H^{s,b}$
spaces, which are not directly controlled by the Hamiltonian.

One might hope to apply Theorem \ref{hs-lwp}, since the regularity
\eqref{hi-eps} should be enough to put $\Phi$ in $[H^s]$.  However
this is inefficient (basically because there is a significant loss
in using \eqref{i-loss}) and in addition there are some low
frequency issues, because of the error terms in Lemma
\ref{hamil-control}.  So we shall instead require a modified local
well-posedness result which is adapted to the estimates arising from
Lemma \ref{hamil-control}.

From \eqref{hi-eps} and Lemma \ref{hamil-control} we thus have
\begin{align}
\underline \Phi(t_0) &\in C\Ball(I^{-1} H^1_x + \L^6_1)\label{phi-t0}\\
\nabla_{x,t} \underline{\Phi}(t_0) &\in C\Ball(I^{-1} L^2_x +
\L^3_2)\label{gradphi-t0}
\end{align}
(recall that $I$ is the identity on low frequencies). We now extend
this control at time $t_0$ to control on $[t_0-\delta, t_0+\delta]$
with the following proposition, which is the main result of this
section.


\begin{proposition}[Spacetime control on $\underline{\Phi}$]\label{i-local}  Adopt the assumptions
of Proposition \ref{main-prop}, and suppose in addition $K \gg 1$ is a sufficiently
large constant.  We conclude there is a $\delta$ with  
$c(K) N^{0-} \leq   \delta \leq C(K)$, such that 
\begin{equation}\label{phi-spacetime}
\nabla_{x,t} \underline{\Phi}(t_0) \in K\Ball(I^{-1}
H^{0,s-}_{[t_0-\delta,t_0+\delta]} + C^0_t \L^3_2)
\end{equation}
on the slab $[t_0-\delta, t_0+\delta] \times \R^3$.
\end{proposition}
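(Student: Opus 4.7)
The proof will run a contraction/bootstrap argument analogous to Theorem \ref{hs-lwp}, but adapted to the $I$-method setup and to the compound-space norm structure $I^{-1} H^{s,b} + C^0_t \L^p_R$ forced by the low-frequency anomaly of the Coulomb gauge. First I would fix a smooth Littlewood--Paley decomposition $\underline{\Phi} = \underline{\Phi}_{hi} + \underline{\Phi}_{lo}$ into frequencies $\gtrsim 1$ and $\lesssim 1$ respectively. The data bounds \eqref{phi-t0}, \eqref{gradphi-t0} then translate into $I\underline{\Phi}_{hi}[t_0] \in C\Ball(H^1_x \times L^2_x)$ and $\underline{\Phi}_{lo}[t_0] \in C\Ball(\L^6_1 \times \L^3_2)$. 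The natural iteration norm is
\begin{equation*}
\| \underline{\Psi} \|_Y := \| I \underline{\Psi}_{hi} \|_{H^{1, s-}_{I_\delta}} + \| I \nabla_{x,t} \underline{\Psi}_{hi} \|_{H^{0,s-}_{I_\delta}} + \| \nabla_{x,t} \underline{\Psi}_{lo} \|_{C^0_t \L^3_2}
\end{equation*}
on $I_\delta = [t_0-\delta, t_0+\delta]$; closing a bootstrap at $\|\underline{\Phi}\|_Y \leq K$ would imply \eqref{phi-spacetime} via \eqref{hsd-energy} and the trivial embedding that identifies $\nabla_{x,t}$ of the high part in $I^{-1} H^{0,s-}$.

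The high-frequency component is controlled by the energy estimate \eqref{energy} applied to $I \underline{\Psi}_{hi}$ (using that $I$ commutes with $\Box$), which gives
\begin{equation*}
\| I \underline{\Psi}_{hi} \|_{H^{1,s-}_{I_\delta}} \lesssim \| I \underline{\Psi}_{hi}[t_0] \|_{H^1_x \times L^2_x} + \delta^{\sigma/2} \| I \Box \underline{\Psi}_{hi} \|_{H^{0, s - 1 + \sigma}_{I_\delta}}
\end{equation*}
for some small $\sigma > 0$ with $\sigma < 1 - (s-)$. The low-frequency component is controlled by the $\L^p_R$ energy estimate \eqref{lpr-energy}. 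The $\delta^{\sigma/2}$ gain is what ultimately allows $\delta$ to be chosen as a small negative power of $N$ times a function of $K$.

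The bulk of the work is to control the nonlinearities from \eqref{caricature}, namely $I(\N_0 + \N_1 + \N_3)$ in $H^{0,s-1+\sigma}_{I_\delta}$, and the analogous estimates restricted to low frequency in $L^1_t \L^3_2$. The bilinear terms $\N_0$ are handled by Proposition \ref{bilinear}, after a frequency-localized paraproduct decomposition that either (i) places $I$ entirely on one factor, leaving one factor in $H^{1, 3/4+}$ (the $I$-corrected piece) and the other in $H^{s, 3/4+}$, exactly the $s' = 1$, $s'' = s$ case of \eqref{null}; or (ii) produces a commutator $[I, \cdot]$ localized to the high-high to bounded output regime, which gains an extra factor controlling it by the same bilinear estimate. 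The term $\N_1$ is handled similarly using \eqref{a0t-phi}, \eqref{a0-phit} together with the $A_0$ bounds from Proposition \ref{a0-est}. The cubic term $\N_3$ is controlled by Strichartz embeddings \eqref{hsd-strichartz} exactly as in the proof of Theorem \ref{hs-lwp}, the only new point being that the low-frequency piece of $\underline\Phi$ is merely in $\L^6_1$ rather than $L^2_x$; but $\L^6_1 \cdot H^s_x \subseteq H^s_x$ by \eqref{lpr-mult}, so those contributions also fit.

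The main obstacle, and what dictates the shape of the argument, is the simultaneous presence of the $I$-multiplier and the compound space $L^2 + \L^3$ at low frequency: one has to track the commutators $[I, \cdot]$ in the bilinear null-form estimate and simultaneously ensure that the low-frequency residue of the nonlinearity can be absorbed into the $C^0_t \L^3_2$ piece rather than being forced into $L^2_x$ (which, by the discussion in Section \ref{global-sec}, would be supercritical after rescaling). Once those bookkeeping issues are handled, choosing $\delta$ as an appropriately small negative power of $N$ (with an additional factor $c(K)$ to beat the $K^{O(1)}$ coming from the nonlinearity) lets the bootstrap close at $\|\underline{\Phi}\|_Y \leq K$ provided $K$ is large compared to the universal constant in \eqref{phi-t0}, \eqref{gradphi-t0}.
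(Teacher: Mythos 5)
Your overall skeleton coincides with the paper's: a continuity/bootstrap argument on the slab, the energy estimates \eqref{energy} and \eqref{lpr-energy} for the local- and low-frequency pieces respectively, $I$-weighted bilinear estimates in which the $N^{1-s}$ loss from measuring the output in $I^{-1}H^{0,s-1}$ is paid for by the $N^{s-1}$ gain available on a high-frequency factor, and a $\delta^{0+}$ factor to close (which is compatible with $\delta \geq c(K)N^{0-}$). However, two of your steps, as written, would fail. First, you assert that the null-form terms are ``handled by Proposition \ref{bilinear}'' via ``the $s'=1$, $s''=s$ case of \eqref{null}.'' The estimate \eqref{null} has no $(s',s'')$ flexibility (only \eqref{no-null} does), and, more substantively, the bounded-bounded interaction --- both factors at frequencies below $N$, say both $\sim N/4$ --- is not covered by any case of Proposition \ref{bilinear}: there all $I$'s are the identity, no $N^{s-1}$ gain is available from either factor, and you need the output at spatial regularity $0$, whereas \eqref{null} and \eqref{no-null} only deliver regularity $s-1$ (resp. $s'+s''-2 \leq s-1$), a deficit of $1-s$ derivatives that cannot be recovered. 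The paper's Lemma \ref{bilinear-cor} handles exactly this case by a separate direct argument: the $N^{0+}$ slack upgrades the bounded-frequency factor to $H^{1+,s-}$, and then one uses the Strichartz embeddings $H^{1+,s-} \subseteq L^{2+}_t L^\infty_x$ and $H^{0,s-} \subseteq C^0_t L^2_x$. Your mechanisms (i)/(ii) do not produce this, and the ``commutator'' in (ii) is not actually relevant here --- commutators only enter later, in Sections \ref{hamildiff-sec}--\ref{n0-sec}.

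Second, your treatment of $\N_1$ ``using \eqref{a0t-phi}, \eqref{a0-phit} together with the $A_0$ bounds from Proposition \ref{a0-est}'' does not close. Those bilinear estimates require $\nabla_{x,t}A_0 \in H^{1/2+,0} = L^2_t H^{1/2+}_x$ (or its $I$-weighted analogue), whereas Proposition \ref{a0-est} only supplies the fixed-time bounds $A_0 \in \dot H^1_x$, $\partial_t A_0 \in L^2_x$, i.e. $C^0_t(\dot H^1_x \times L^2_x)$ --- half a derivative short; crude H\"older/Sobolev substitutes land $\N_1$ in spaces like $C^0_t L^{3/2}_x$ which do not embed into $H^{0,s-1}$. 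The paper obtains the needed spacetime regularity \eqref{a0-nab} by first controlling $\N_2$ (and $\N_3$) in $I^{-2}H^{0,0}$ via the modified estimate \eqref{no-null-I} and then inverting the elliptic equation in \eqref{caricature}; your proposal omits $\N_2$ and this elliptic upgrade entirely. A smaller point in the same direction: if your $\sigma>0$ is a fixed constant, you must estimate the nonlinearity in $H^{0,s-1+\sigma}$, which \eqref{null} does not give (its output $b$-index is exactly $s-1$); as in the paper you should take $\sigma = 0+$ and settle for a $\delta^{0+}$ gain, which still dominates the $N^{0+}$ losses.
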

\begin{remark}
Note that the ``$b$'' index is now $s-$ instead of $3/4+$. This
extra regularity in the $b$ index will turn out to be helpful when
proving \eqref{h-shift}. These types of estimates morally follow
from the local well-posedness theory (or more precisely, the
multilinear estimates underlying that theory) using tools such as
\cite[Lemma 12.1]{ckstt:5}, but for various technical reasons it is
not feasible to do so directly, and so we have chosen instead the
following more pedestrian argument.
\end{remark}

\begin{proof}
All our spacetime norms here will be on the slab $[t_0-\delta,
t_0+\delta]$.  Many of the unpleasant technicalities in the
following argument will arise from the low frequency terms $\L^p_R$,
and the reader is advised to ignore all the contributions from these
terms in a first reading as they are not the essential difficulty.

By the continuity method and the smoothness of $\Phi$ it will
suffice to prove this under the \emph{a priori} assumption
\begin{equation}\label{continuity}
\nabla_{x,t} \underline{\Phi} \in 2K\Ball(I^{-1}
H^{0,s-}_{[t_0-\delta,t_0+\delta]} + C^0_t \L^3_2)
\end{equation}
(since this will imply that the space of $\delta$ for which
\eqref{phi-spacetime} holds is both open and closed, if $\delta$ is
restricted to be sufficiently small).

To prove this, we begin by estimating $\underline{\Phi}$ in various
auxiliary norms. Split $\underline{\Phi}$ smoothly into a low
frequency component $\underline{\Phi}_{\low}$ supported on $|\xi|
\leq 2$, and a local component $\underline{\Phi}_{\local}$ supported
on $|\xi| \geq 1$.  From \eqref{continuity} we have
$$ \nabla_x \underline{\Phi}_{\local} \in CK\Ball(I^{-1} H^{0,s-}_{[t_0-\delta, t_0 + \delta]} + C^0_t
\L^3_2),$$ which implies that
$$ \underline{\Phi}_{\local} \in CK\Ball(I^{-1} H^{1,s-}_{[t_0-\delta, t_0 + \delta]} + C^0_t \L^3_2). $$
Also, from Sobolev embedding, \eqref{hsd-energy} and the low
frequency restriction we have
$$ \partial_t \underline{\Phi}_{\low} \in CK\Ball(C^0_t \L^3_2)$$
while from \eqref{phi-t0} and Sobolev embedding we have
$$ \underline \Phi_{\low}(t_0) \in C\Ball(\L^6_1 + \L^3_2)$$
so by the fundamental theorem of calculus we have
$$ \underline \Phi_{\low} \in CK\Ball(C^0_t \L^6_1 + C^0_t \L^3_2).$$
Combining these estimates we obtain
\begin{equation}\label{phi-ball}
\underline \Phi \in CK\Ball(I^{-1} H^{1,s-}_{[t_0-\delta, t_0 +
\delta]} + C^0_t \L^3_2 + C^0_t \L^6_1).
\end{equation}
From Strichartz \eqref{hsd-strichartz} and \eqref{lpr-bernstein} we
have in particular that
$$ \underline \Phi \in CK \Ball(L^6_t L^6_x).$$

Our next task is to obtain estimates on $A_0$. From
\eqref{phi-ball}, \eqref{hsd-energy}, \eqref{i-loss} and Sobolev
embedding we have
$$
\underline \Phi \in CK \Ball(C^0_t H^s_x + C^0_t \L^6_2)$$ while
from \eqref{continuity}, \eqref{hsd-energy}, \eqref{i-loss} we have
$$
\underline \Phi_t \in CK\Ball(C^0_t H^{s-1}_x + C^0_t \L^3_2).$$ 
From
Lemma \ref{mult}, \eqref{lpr-mult}, and \eqref{lpr-holder} we thus
have
$$
\underline \Phi \ \underline \Phi_t \in CK^2 \Ball(C^0_t \dot
H^{-1}_x)$$ so by \eqref{cz} we have
\begin{equation}\label{a0-bound}
A_0 \in CK^2 \Ball(C^0_t \dot H^1_x).
\end{equation}
From Sobolev embedding we thus have
\begin{equation}\label{a-sob}
A_0 \in CK^2 \Ball(C^0_t L^6_x) \subseteq CK^2 \Ball(L^6_t L^6_x).
\end{equation}
Combining this with our $L^6_t L^6_x$ bound on $\underline \Phi$ we
thus have $\Phi \in CK^2 \Ball(L^6_t L^6_x)$. In particular we can
control the cubic nonlinearity $\N_3$:
\begin{equation}\label{cubic}
\N_3 \in CK^6 \Ball(H^{0,0}_{[t_0-\delta, t_0+\delta]}).
\end{equation}
To control the bilinear nonlinearities $\N_0$, $\N_1$, $\N_2$ we
need the following variant of Proposition \ref{bilinear}:

\begin{lemma}[Modified bilinear estimates]\label{bilinear-cor}
\begin{align}
\|\eta(t) \phi \nabla_{x,t} \psi \|_{I^{-2} H^{0,0}} &\lesssim
N^{0+}
\| \phi \|_{I^{-1} H^{1, s-}} \| \nabla_{x,t} \psi \|_{I^{-1} H^{0, s-}} \label{no-null-I}\\
\|\eta(t) (\partial_t A_0) \phi \|_{I^{-1} H^{0, s-1}} &\lesssim
N^{0+} \| \partial_t A_0 \|_{H^{1/2+,0}} \| \phi \|_{I^{-1} H^{1,
s-}}
\label{a0t-phi-I}\\
\|\eta(t) A_0 (\partial_t \phi) \|_{I^{-1} H^{0, s-1}} &\lesssim
N^{0+} \| \nabla_{x,t} A_0 \|_{H^{1/2+,0}} \| \partial_t \phi
\|_{I^{-1} H^{0, s-}}
 \label{a0-phit-I}\\
\|\eta(t) \N_0(\phi, \psi) \|_{I^{-1} H^{0, s-1}} &\lesssim N^{0+}
\| \phi \|_{I^{-1} H^{1, s-}} \| \psi \|_{I^{-1} H^{1, s-}}
\label{null-I}
\end{align}
\end{lemma}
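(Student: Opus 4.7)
The plan is to reduce each of the four estimates to its unmodified counterpart in Proposition \ref{bilinear}, with the $N^{0+}$ loss arising precisely from working around the endpoint exclusion $s'=s''=1$ there and from modified bilinear symbol bounds on high-high to low frequency interactions. Unfolding the definition $\|u\|_{I^{-k}X} = \|I^k u\|_X$ and substituting $F = I\phi$, $G = I\nabla_{x,t}\psi$, each inequality becomes a bilinear Fourier multiplier estimate. For instance, \eqref{no-null-I} becomes
$$ \|I^2\bigl((I^{-1}F)(I^{-1}G)\bigr)\|_{H^{0,0}} \lesssim N^{0+}\|F\|_{H^{1,s-}}\|G\|_{H^{0,s-}}, $$
namely a bilinear multiplier estimate with symbol $m(\xi_1+\xi_2)^2/(m(\xi_1)m(\xi_2))$.

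First I would record pointwise bounds on the multiplier. Since $m$ is a non-increasing radial symbol with $m \equiv 1$ for $|\xi| \lesssim N$ and $m(\xi) \sim (N/|\xi|)^{1-s}$ for $|\xi| \gg N$, a case analysis on the relative sizes of $\xi_1, \xi_2, \xi_1+\xi_2$ shows $m(\xi_1+\xi_2)^2/(m(\xi_1)m(\xi_2)) \lesssim 1$ unless one is in the high-high to low regime where $|\xi_1|, |\xi_2| \gg N$ with $|\xi_1+\xi_2|$ much smaller than either input; in that regime the modified symbol is bounded by $(|\xi_1||\xi_2|/N^2)^{1-s}$ but no worse. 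Analogous estimates apply to the simpler single-$I$ symbols $m(\xi_1+\xi_2)/m(\xi_2)$ arising in \eqref{a0t-phi-I}--\eqref{a0-phit-I}.

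Next I would perform a Littlewood--Paley decomposition on the input and output frequencies. In the benign regime (multiplier $O(1)$) the estimate reduces to a perturbation of Proposition \ref{bilinear}: the natural choice $s'=s''=1$ in \eqref{no-null} is excluded there, but one can take $s'=1$, $s''=1-$ (or symmetrically), legitimately covered by \eqref{no-null}, and absorb the $\varepsilon$ gap using the slack $(s-)-(3/4+)>0$ in the $b$-index (available since $s>\sqrt{3}/2>3/4$), with the residue being exactly the advertised $N^{0+}$ loss. In the high-high to low regime the symbol blow-up $(|\xi_1||\xi_2|/N^2)^{1-s}$ is absorbed by the extra spatial weight $\langle\xi_1\rangle$ in $\|F\|_{H^{1,s-}}$, which provides more than enough room to convert the symbol weight back into $L^2$-acceptable form once one reinvests the $N^{-2(1-s)}$ factor; the remaining off-endpoint Damiano--Klainerman estimate from \cite{damiano:klainerman} then closes the estimate with a final $N^{0+}$ loss. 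The null-form estimate \eqref{null-I} is handled by the same frequency decomposition combined with the null-form identity \eqref{n1} used in the proof of \eqref{null}.

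The estimates \eqref{a0t-phi-I} and \eqref{a0-phit-I} follow the same scheme but with only one $I$ to distribute; the role played by $H^{s,3/4+}$ on the $\phi$ factor in \eqref{a0t-phi}, \eqref{a0-phit} is taken by $I^{-1}H^{1,s-}$, while the $A_0$ factor remains in $H^{1/2+,0}$ without modification (the $A_0$ regularity is already high enough that no $I$ is needed there). The main technical obstacle throughout is the excluded endpoint case $s'=s''=1$ in Proposition \ref{bilinear} together with the parallel multiplier blow-up in the high-high to low regime; both are resolved by the combination of the $b$-index slack $(s-)-(3/4+)>0$ and the $N^{0+}$ loss built into the statement, which is the ``price'' paid for trading a true endpoint estimate for a slightly off-endpoint one.
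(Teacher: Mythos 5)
Your overall skeleton (unfold the $I$-weights into a bilinear multiplier, do a frequency case analysis, and recycle Proposition \ref{bilinear}) is the same as the paper's, and your pointwise bounds on the commuted symbol are correct. But there is a genuine gap in how you close the ``benign'' cases, i.e. the bounded-high, high-bounded, and high-high-with-comparable-output interactions. There, after bounding the multiplier by $O(1)$, the estimate you still need for \eqref{no-null-I} is exactly the product map $H^{1,s-}\cdot H^{0,s-}\to H^{0,0}$, which is the excluded endpoint $s'=s''=1$ of \eqref{no-null}; replacing it by $s'=1$, $s''=1-$ (or $s'=1-$, $s''=1$) puts the output only in $H^{0-,0}$, and the missing $\varepsilon$ of spatial regularity now sits at frequencies $\gtrsim N$ (indeed arbitrarily large in the high-bounded case), so it cannot be paid for by the allowed $N^{0+}$. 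Moreover the ``slack $(s-)-(3/4+)$ in the $b$-index'' cannot be traded for spatial regularity: surplus in the $\langle|\xi|-|\tau|\rangle$ weight gives no decay in $\langle\xi\rangle$, so it does not bridge the $\varepsilon$ gap. Your off-endpoint reduction is legitimate only in the bounded-bounded interaction, where the output frequency is $\lesssim N$ and the $N^{0+}$ genuinely absorbs the loss (the paper instead handles that case by a direct Strichartz argument with $H^{1+,s-}\subseteq L^{2+}_tL^\infty_x$).

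The repair is to keep the decay of the symbol rather than discard it: bound the output weight by $m(\xi)^k\lesssim N^{k(1-s)}\langle\xi\rangle^{k(s-1)}$, apply \eqref{no-null} or \eqref{null} with $(s',s'')=(s,s)$, $(s,1)$ or $(1,s)$ --- choices which never touch the endpoint and lose no $\varepsilon$ at high frequency --- and then cancel the explicit $N^{k(1-s)}$ against the factors $N^{s-1}$ gained when a high-frequency input is re-expressed through its $I^{-1}$-weighted norm (e.g. $\|\phi\|_{H^{s,s-}}\lesssim N^{s-1}\|\phi\|_{I^{-1}H^{1,s-}}$). This is precisely the paper's bookkeeping, and it is also the correct way to make rigorous your sketch of the high-high-to-low regime, where your ``reinvest the $N^{-2(1-s)}$'' remark amounts to the cancellation $N^{2-2s}\cdot N^{s-1}\cdot N^{s-1}=1$ with \eqref{no-null} at $s'=s''=s$. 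A parallel issue occurs in \eqref{a0t-phi-I}, \eqref{a0-phit-I} when $\phi$ is at bounded frequency but $A_0$ is high: the reduction to \eqref{a0t-phi}, \eqref{a0-phit} at shifted exponents is not available (the extra $1-s$ derivatives on the output ride on the $A_0$ frequency), and one should instead estimate directly in $L^2_tL^2_x$ via H\"older, Sobolev ($H^{1/2+,0}\subseteq L^2_tL^3_x$, or $L^2_tL^\infty_x$ after a gradient) and Strichartz, as the paper does; this is an easy but necessary extra step that your ``same scheme'' description skips.
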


\begin{proof}
We first prove \eqref{null-I}.  By an appropriate Fourier
decomposition\footnote{For instance, one can decompose both $\tilde
\phi$ and $\tilde \psi$ into three components with frequency support
$\lesssim N$, $\sim N$, $\gtrsim N$ respectively, in such a way that
each of the nine interactions falls into one of the four categories
described below.} it will suffice to prove the estimate in the
following four cases:
\begin{itemize}
\item (bounded-bounded interaction) $\tilde \phi$, $\tilde \psi$ are supported on the region $|\xi| \leq N/2$.
\item (high-high interaction) $\tilde \phi$, $\tilde \psi$ are supported on $|\xi| > N/10$.
\item (bounded-high interaction) $\tilde \phi$ is supported on $|\xi| < N/5$ and $\tilde \psi$ is supported on $|\xi| > N/4$.
\item (high-bounded interaction) $\tilde \psi$ is supported on $|\xi| < N/5$ and $\tilde \phi$ is supported on $|\xi| > N/4$.
\end{itemize}

By \eqref{null} (noting that $s- > 3/4+$)  we have
$$ \| \N_0(\phi, \psi) \|_{I^{-1} H^{0, s-1}}
\lesssim N^{1-s} \| \N_0(\phi, \psi) \|_{H^{s-1, s-1}} \lesssim
N^{1-s} \| \phi \|_{H^{s, s-}} \| \psi \|_{H^{s, s- }}. $$ If $\phi$
has high frequency then $\| \phi \|_{H^{s, s-}} \lesssim N^{s-1} \|
\phi \|_{I^{-1} H^{1, s-}}$, otherwise we just have $\| \phi
\|_{H^{s, s-}} \lesssim \| \phi \|_{I^{-1} H^{1, s-}}$. Similarly
for $\psi$. This allows us to deal with the high-high, high-bounded,
and bounded-high cases. In the bounded-bounded case, all the $I$
operators are the identity, so it suffices to show
$$\|\eta(t) \N_0(\phi, \psi) \|_{H^{0, s-1}} \lesssim \| \phi \|_{H^{1+, s-}} \| \psi \|_{H^{1, s- }}$$
where we have used the $N^{0+}$ to gain an epsilon regularity on the
bounded frequency function $\phi$.  We crudely estimate the
$H^{0,s-1}$ norm by the $L^2_t L^2_x$ norm and crudely write the
null form $\N_0(\phi, \psi)$ as $\O( \phi \nabla_x \psi )$.  The
claim then follows from the Strichartz embeddings $H^{1+, s-}
\subseteq L^{2+}_t L^\infty_x$  and $H^{0,s-} \subseteq C^0_t L^2_x$
from \eqref{hsd-strichartz}.

Now we prove \eqref{a0t-phi-I}, \eqref{a0-phit-I}.  In the
bounded-high and high-high cases this follows from
\eqref{a0t-phi}, \eqref{a0-phit} by the same arguments as before
(indeed, we may even weaken the $H^{1/2+,0}$ norm to $I^{-1}
H^{1/2+,0}$).  It remains to consider the bounded-bounded and high-bounded cases.  We begin with the bounded-bounded case.
Estimating the $H^{0,s-1}$ norm by the $L^2_t L^2_x$ norm, it
suffices to show
$$ \|\eta(t) (\partial_t A_0) \phi \|_{L^2_t L^2_x} \lesssim \| \partial_t A_0 \|_{H^{1/2+,0}} \| \phi \|_{H^{1, s-}}
$$
and
$$ \|\eta(t) A_0 (\partial_t \phi) \|_{L^2_t L^2_x} \lesssim \| \nabla_{x,t} A_0 \|_{H^{1/2+,0}}
\| \partial_t \phi \|_{H^{0, s-}}. $$ For the first estimate we use
the Sobolev embedding $H^{1/2+,0} \subseteq L^2_t L^3_x$ and the
Strichartz estimate $H^{1,s-} \subseteq C^0_t L^6_x$ from
\eqref{hsd-strichartz}. For the second we use the Sobolev embedding
$\| A_0 \|_{L^2_t L^\infty_x} \lesssim \| \nabla_{x,t} A_0
\|_{H^{1/2+,0}}$ and the Strichartz estimate $H^{0,s-} \subseteq
C^0_t L^2_x$.   The same argument also deals with the high-bounded case; one can use the theory of paraproducts to cancel the factors of $I$.

Finally we prove \eqref{no-null-I}.  In the high-high case we
compute using \eqref{no-null} with $s'=s''=s$:
$$ \| \phi \nabla_{x,t} \psi \|_{I^{-2} H^{0, 0}}
\lesssim N^{2-2s} \| \phi \nabla_{x,t} \psi \|_{H^{2s-2, 0}}
\lesssim N^{2-2s} \| \phi \|_{H^{s, s- }} \| \nabla_{x,t} \psi
\|_{H^{s-1, s- }} $$
 which is acceptable in the high-high
case.  In the high-bounded case we use \eqref{no-null} with $s'=s$
and $s''=1$:
$$ \| \phi \nabla_{x,t} \psi \|_{I^{-2} H^{0, 0}}
\lesssim N^{1-s} \| \phi \nabla_{x,t} \psi \|_{H^{s-1, 0}} \lesssim
N^{1-s} \| \phi \|_{H^{s, s- }} \| \nabla_{x,t} \psi \|_{H^{0, s-
}}$$ which is acceptable.  For the bounded-high case we similarly
use \eqref{no-null} with $s'=1$ and $s''=s$.  Now we turn to the
bounded-bounded case.  Here we take advantage of the additional
$N^{0+}$ factor; it suffices to prove
$$
\|\eta(t) \phi \nabla_{x,t} \psi \|_{H^{0,0}} \lesssim \| \phi
\|_{H^{1+, s- }} \| \nabla_{x,t} \psi \|_{H^{0, s- }}.
$$
But then this follows from the Strichartz embeddings $H^{1+, s- }
\subseteq L^{2+}_t L^\infty_x$ and $H^{0,s- } \subseteq C^0_t L^2_x$
from \eqref{hsd-strichartz}.
\end{proof}

We can now control $\N_2$.  Indeed we claim
\begin{equation}\label{a-nab}
\N_2 \in CN^{0+} I^{-2} K \Ball(H^{0,0}_{[t_0-\delta, t_0+\delta]})
\end{equation}
To prove this we multiply \eqref{continuity} and \eqref{phi-ball}.
To multiply the two $H^{s,b}$ spaces we use\footnote{The estimates
there were phrased for cutoff functions $\eta$ centered at the
origin, but it is clear from time translation invariance that one
can also use cutoff functions centered at $t_0$.} \eqref{no-null-I}.
To multiply the $\L^p_R$ spaces we use \eqref{lpr-holder},
\eqref{lpr-bernstein} (indeed we can get into $H^{0,0}= L^2_t L^2_x$
for these terms).

It remains to handle the cross terms when $H^{s,b}$ is multiplied
against a $\L^p_R$. By \eqref{continuity} and \eqref{phi-ball} it
suffices to prove the embedding
\begin{equation}\label{i-embed}
(I^{-1} H^{0,0}) \cdot C^0_t \L^p_R \subseteq I^{-1} H^{0,0}
\end{equation}
for $\L^p_R = \L^6_1, \L^3_2$. But this is implied by
\begin{equation}\label{i-2}
(I^{-1} L^2_x) \cdot \L^p_R \subseteq I^{-1} L^2_x
\end{equation}
which easily follows from \eqref{lpr-mult} and a decomposition in to
high and bounded frequencies.

From \eqref{a-nab}, \eqref{cubic}, \eqref{caricature} we have
$$ \nabla_x \nabla_{x,t} A_0 \in CN^{0+} K^7 \Ball(I^{-2} H^{0,0}_{[t_0-\delta, t_0+\delta]}).$$
Split $A_0$ into bounded frequencies $|\xi| \leq 1$ and local
frequencies $|\xi| \geq 1/2$.  For the local component we invert
$\nabla$ in the above, while for bounded frequencies we use Sobolev
embedding, to obtain
\begin{equation}\label{a0-nab}
\nabla_{x,t} A_0 \in CN^{0+} K^7 \Ball(I^{-2} H^{1,0}_{[t_0-\delta,
t_0+\delta]} + L^2_t \L^6_1).
\end{equation}

Now we control $\N_0$, $\N_1$, $\N_3$.  Indeed we claim
\begin{equation}\label{box-phi}
\N_0, \N_1, \N_3 \in C N^{0+} K^{20} \Ball(I^{-1} H^{0,
s-1}_{[t_0-\delta, t_0+\delta]} + L^2_t \L^3_2).
\end{equation}
The cubic term $\N_3$ is acceptable by \eqref{cubic}, so we turn to
$\N_1$.

To control the $A_0 \partial_t \underline \Phi$ component of $\N_1$,
we use \eqref{continuity}.  The $2K\Ball(C^0_t \L^3_2)$ component of
$\partial_t \underline \Phi$ will be acceptable from \eqref{a-sob}, as this places this contribution to $A_0 \partial_t \underline \Phi$ in $C^0_t L^2_x$, which easily embeds into $I^{-2} H^{1,0}_{[t_0-\delta,
t_0+\delta]}$.  So it suffices to show that
$$ A_0 \cdot 2K\Ball(I^{-1} H^{0,s- }_{[t_0-\delta,t_0+\delta]}) \subseteq
C N^{0+} K^{20} \Ball(I^{-1} H^{0, s-1}_{[t_0-\delta,t_0+\delta]}).$$

Split $A_0$ into low frequencies $|\xi| \leq 2$ and local
frequencies $|\xi| \geq 1$. For local frequencies we can use
\eqref{a0-nab} and \eqref{a0-phit-I} (observing that $I^{-2} H^{1,0}
\subseteq H^{1/2+,0}$).  For low frequencies we have
$$ A_0 \in CK^2 \Ball(C^0_t \L^6_2)$$
from \eqref{a0-bound}, and the claim will follow from
\eqref{i-embed}.

Now we control the $\partial_t A_0 \underline \Phi$ component of
$\N_1$.  To do this we multiply \eqref{a0-nab} with
\eqref{phi-ball}.  The product of $I^{-2} H^{1,0}$ and $I^{-1}
H^{1,s- }$ is acceptable from \eqref{a0t-phi-I} (again observing
$I^{-2} H^{1,0} \subseteq H^{1/2+,0}$).  The product of $L^2_t
\L^6_1$ and $C^0_t \L^3_t$ is in $L^2_t L^2_x \subseteq I^{-1}
H^{0,s-1}$ by \eqref{lpr-holder}, while the product of $L^2_t
\L^6_1$ and $C^0_t \L^6_1$ is similarly in $L^2_t \L^3_2$. For the
cross terms between $H^{1/2+,0}$ and $C^0_t L^p_R$ we use
\eqref{i-embed} (since $H^{1/2+,0} \subseteq I^{-1} H^{0,0}$), while
for the cross terms between $I^{-1} H^{1,s-}$ and $L^2_t \L^6_1$ we
use the embedding $I^{-1} H^{1,s- } \subseteq C^0_t (I^{-1} L^2_x)$
followed by \eqref{i-2}.

Finally, we control $\N_0 = \N_0(\underline \Phi, \underline \Phi)$.
We decompose $\underline \Phi$ smoothly into a low frequency
component $\underline \Phi_{\low}$ supported on $|\xi| \leq 4$ and a
local frequency component $\underline \Phi_{\local}$ supported on
$|\xi| \geq 2$.  The contribution of $\N_0(\underline \Phi_{\local},
\underline \Phi_{\local})$ is acceptable from \eqref{null-I} and
\eqref{phi-ball}.  For the remaining terms we shall not exploit the
null structure, and just write $\N_0(\underline \Phi, \underline
\Phi)$ crudely as $\O( \underline \Phi \nabla_x \underline \Phi )$.

Consider $\underline \Phi_{\low} \nabla_{x,t} \underline
\Phi_{\local}$.  From \eqref{continuity} we have $\nabla_{x,t}
\underline \Phi_{\local} \in CK \Ball(I^{-1} H^{0, s-
}_{[t_0-\delta,t_0+\delta]})$, while from \eqref{phi-ball} and
\eqref{lpr-bernstein} we have $\underline \Phi_{\low} \in CK
\Ball(C^0_t \L^6_4)$.  The claim then follows from \eqref{i-embed}.

Now consider $\underline \Phi_{\local} \nabla_{x,t} \underline
\Phi_{\low}$.  From \eqref{continuity} and \eqref{lpr-bernstein} we
have $\nabla_{x,t} \underline \Phi_{\low} \in CK \Ball(C^0_t
\L^3_4)$, while from \eqref{phi-ball} we have $\underline
\Phi_{\local} \in CK \Ball(I^{-1} H^{1,s-}_{[t_0-\delta, t_0 +
\delta]})$. The claim again follows from \eqref{i-embed}.

Finally we consider $\underline \Phi_{\low} \nabla_{x,t} \underline
\Phi_{\low}$.  As mentioned in the previous paragraphs we have
$\underline \Phi_{\low} \in CK \Ball(C^0_t \L^6_4)$ and
$\nabla_{x,t} \underline \Phi_{\low} \in CK \Ball(C^0_t \L^3_4)$.
The claim then follows from \eqref{lpr-holder}.

This completes the proof of \eqref{box-phi}.  From
\eqref{caricature} we then have
$$
\Box \underline \Phi \in CN^{0+} K^{20} \Ball(I^{-1} H^{0,
s-1}_{[t_0-\delta, t_0+\delta]} + L^2_t \L^3_2).
$$
From \eqref{energy}, \eqref{lpr-energy}, \eqref{gradphi-t0} and the
linearity of $\Box$, we thus have
$$
\nabla_{x,t} \underline \Phi \in C \delta^{0+} N^{0+} K^{20}
\Ball(I^{-1} H^{0,s-}_{[t_0-\delta, t_0+\delta]} + C^0_t \L^3_2).
$$

If we choose $K$ sufficiently large, and $\delta$ sufficiently small
depending on $K$, $N$ (but with $\delta \geq C(K) N^{0-}$), then
\eqref{phi-spacetime} follows, as desired.
\end{proof}

We apply the 
above Proposition with a fixed $K$ sufficiently large, i.e. with $K$  an absolute constant,
following our conventions for such constants.  In particular 
henceforth all implicit constants are allowed to depend on $K$.  As
a corollary of the above argument (specifically
\eqref{phi-spacetime}, \eqref{phi-ball}, \eqref{a0-nab},
\eqref{a-sob}) we have the estimates

\begin{align}
\underline \Phi &\in C \Ball(I^{-1} H^{1,s-}_{[t_0-\delta, t_0+\delta]} + C^0_t \L^6_2) \label{phi-bound}\\
\nabla_{x,t} \underline \Phi &\in C \Ball(I^{-1} H^{0,s- }_{[t_0-\delta,t_0+\delta]} + C^0_t \L^3_2) \label{phit-bound}\\
A_0 &\in C\Ball(I^{-2} H^{2,0}_{[t_0-\delta,t_0+\delta]} + C^0_t \L^6_1) \label{A0-bound}\\
\partial_t A_0 &\in C \Ball(I^{-2} H^{1,0}_{[t_0-\delta,t_0+\delta]} + L^2_t \L^6_1). \label{A0t-bound}
\end{align}
In other words, ignoring the technical low frequency issues,
$I\underline \Phi$ lives in $H^{1,s- }$ and $I^2 A_0$ lives in
$H^{2,0}$, and similarly for the time derivatives (but with one
lower order of regularity, of course).

In the remainder of the paper we use the estimates
\eqref{phi-bound}-\eqref{A0t-bound} to obtain \eqref{h-shift}.

\section{Differentiating the Hamiltonian}\label{hamildiff-sec}

Having obtained control on $A_0$, $\Phi$ on the interval
$[t_0-\delta, t_0+\delta]$, we are now ready to begin the proof of
\eqref{h-shift}. We shall use the real inner product $\langle u, v
\rangle := \Re \int_{\R^3} u(x) \overline{v(x)}\ dx$ throughout this
section.  Since $m$ is real and symmetric we observe that $I$ is
self-adjoint: $\langle Iu, v\rangle = \langle u, Iv \rangle$.
Similarly for $I^{-1}$.

Fix $T \in [t_0-\delta/2, t_0+\delta/2]$.  By the Fundamental
theorem of calculus it suffices to show that
\begin{equation}\label{ftoc}
\left|\int_{t_0}^T \frac{d}{dt} \H[I\Phi[t]]\ dt\right| \lesssim
\frac{1}{N^{(s-1/2)-}}.
\end{equation}

Our next task is to expand the expression
\begin{equation}\label{deriv}
\frac{d}{dt} \H[I\Phi[t]].
\end{equation}
If the $I$ were not present then \eqref{deriv} would vanish.  With
the $I$ present, \eqref{deriv} does not vanish completely, but we
will be able to express \eqref{deriv} in terms of commutators of $I$
and other operators.

Before we do so, let us begin with a heuristic discussion, ignoring
the elliptic term $A_0$ and the null structure.  Since (MKG-CG) is
roughly of the form
\begin{equation}\label{b-c}
\Box \Phi = \O( \Phi \nabla_{t,x} \Phi ) + \O( \Phi \Phi \Phi )
\end{equation}
and the Hamiltonian \eqref{hamil-def} is roughly of the form
$$ \H[\Phi] = \int_{\R^3} \O( \nabla_{t,x} \Phi \nabla_{t,x} \Phi ) + \O( \Phi \Phi \nabla_{t,x} \Phi ) +
\O( \Phi \Phi \Phi \Phi)$$ it seems reasonable to expect an identity
roughly of the form
\begin{equation}\label{h-deriv-heuristic}
\partial_t \H[\Phi] = \langle \Phi_t, \Box \Phi - \O( \Phi \nabla_{t,x} \Phi ) - \O( \Phi \Phi \Phi ) \rangle
\end{equation}
for arbitrary $\Phi$ (not necessarily solving (MKG-CG)), since we
know in advance that the Hamiltonian must be preserved by the flow
(MKG-CG).   In particular we expect
$$ \partial_t \H[I\Phi] = \langle I\Phi_t, \Box I\Phi - \O( I\Phi \nabla_{t,x} I\Phi ) - \O(I\Phi I\Phi I\Phi) \rangle.$$
On the other hand, by applying $I$ to \eqref{b-c}, we have
$$ \Box I \Phi - I\O(\Phi \nabla_{x,t} \Phi) - I\O(\Phi \Phi \Phi) = 0.$$
Inserting this into the previous equation, we expect to split $\partial_t
\H[I\Phi]$ as two commutators:
\begin{equation}\label{ht}
 \partial_t \H[I\Phi] = \langle I\Phi_t, \O( I(\Phi \nabla_{t,x} \Phi) - I\Phi \nabla_{t,x} I\Phi \rangle ) + \langle I \Phi_t, \O( I(\Phi \Phi \Phi) - I\Phi I\Phi I\Phi ) \rangle.
 \end{equation}

We now begin the rigorous argument.  The rigorous form of
\eqref{h-deriv-heuristic} is

\begin{lemma}[First variation of Hamiltonian]\label{identity}
If $\Phi$ is arbitrary (not necessarily solving (MKG-CG)), then
$$
\frac{d}{dt} \H[\Phi[t]] = -\langle F_{\mu 0}, \partial^\alpha
{F_\alpha}^\mu + \Im(\phi \overline{D^\mu \phi}) \rangle - \langle
D_0 \phi, D_\alpha D^\alpha \phi \rangle,
$$
where $D_\alpha$ and $F_{\alpha \beta}$ were defined in
\eqref{conn-def}, \eqref{curv-def}.
\end{lemma}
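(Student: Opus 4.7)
The plan is to verify this identity by a direct off-shell computation: differentiate $\H[\Phi[t]]$ term by term, use the algebraic identities satisfied by $F$ and $D$, integrate by parts, and match the output to the right-hand side. Since nothing is assumed about $\Phi$ satisfying the equations, no clever on-shell trick is available; one simply needs to be careful with signs arising from $\eta = \diag(-1,1,1,1)$.

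For the Maxwell contribution I would write the first two summands of $\H$ as $\int \frac{1}{2}\sum_i F_{0i}^2 + \frac{1}{4}\sum_{i,j}F_{ij}^2\,dx$. Differentiation in $t$ produces $\int \sum_j F_{0j}\partial_t F_{0j}$ from the electric term, while for the magnetic term one uses the consequence of the Bianchi identity
$$ \partial_t F_{ij} \;=\; \partial_i F_{0j} - \partial_j F_{0i}, $$
together with the antisymmetry of $F_{ij}$, to reduce $\frac{1}{2}\sum_{i,j}F_{ij}\partial_t F_{ij}$ to $\sum_{i,j}F_{ij}\partial_i F_{0j}$. A spatial integration by parts then rewrites this as $-\sum_{i,j}\int(\partial_i F_{ij})F_{0j}\,dx$, so the two Maxwell contributions combine into $\sum_j\int F_{0j}(\partial_t F_{0j}-\partial_i F_{ij})\,dx$. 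Recognising $\partial_t F_{0j}-\partial_i F_{ij}$ as (up to a sign from raising the index) the component $\partial^\alpha F_\alpha{}^j$, and using $F_{j0}=-F_{0j}$ together with the fact that $F_{00}=0$ kills the $\mu=0$ contribution, rewrites this Maxwell piece in precisely the form $-\langle F_{\mu 0},\partial^\alpha F_\alpha{}^\mu\rangle$.

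For the matter piece, I would differentiate $\frac{1}{2}|D_0\phi|^2+\frac{1}{2}\sum_j|D_j\phi|^2$ via $\partial_t|D_\mu\phi|^2 = 2\Re(\overline{D_\mu\phi}\,\partial_t D_\mu\phi)$. The key observation is that $\partial_t = D_0 - iA_0$, so
$$ \partial_t D_\mu\phi \;=\; D_0 D_\mu \phi - i A_0 D_\mu \phi, $$
and the $iA_0$ correction contributes $-A_0\Im|D_\mu\phi|^2=0$ inside the real part. Thus the matter piece becomes $\int \Re(\overline{D_0\phi}D_0^2\phi) + \sum_j\int\Re(\overline{D_j\phi}D_0D_j\phi)\,dx$. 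For the spatial terms I apply the curvature commutator $D_0 D_j = D_j D_0 + i F_{0j}$ and integrate by parts in space using $D_j^\ast=-D_j$ (which holds because $A_j$ is real). The $D_jD_0\phi$ contribution then collapses to $-\langle D_0\phi, D_j^2\phi\rangle$, and combined with $\langle D_0\phi, D_0^2\phi\rangle$ yields, after invoking $D_\alpha D^\alpha = -D_0^2 + D_j^2$, the term $-\langle D_0\phi, D_\alpha D^\alpha\phi\rangle$.

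The leftover commutator piece $\sum_j\int\Re(\overline{D_j\phi}\,iF_{0j}\phi)\,dx$ is evaluated using $\Re(iz)=-\Im z$ and the commutativity of multiplication to give $\pm\sum_j\int F_{0j}\Im(\phi\overline{D_j\phi})\,dx$, which is exactly the $-\langle F_{\mu 0},\Im(\phi\overline{D^\mu\phi})\rangle$ contribution once one raises indices and accounts for $F_{j0}=-F_{0j}$. Assembling the Maxwell and matter pieces then gives the claimed identity. The main obstacle is bookkeeping: the Minkowski signature, the antisymmetry of $F_{\alpha\beta}$, the conjugation in the inner product, and the $i$'s in the covariant derivatives all contribute signs that must be tracked consistently; once a coherent convention is fixed and the commutator $[D_0,D_j]=iF_{0j}$ is applied at the right step, everything lines up.
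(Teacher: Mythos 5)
Your proposal is correct, and in substance it runs on the same two identities the paper's proof uses, namely the Bianchi-type identity $\partial_t F_{ij} = \partial_i F_{0j} - \partial_j F_{0i}$ and the curvature commutator $[D_0,D_j] = iF_{0j}$, but it is organized differently. The paper writes $\H[\Phi[t]] = \int \T_{00}\,dx$ for the stress--energy tensor $\T_{\alpha\beta}$ and computes the pointwise four-divergence $\partial^\alpha \T_{\alpha 0}$, discarding the spatial divergence $\partial_j \T_{j0}$ only when integrating at the end; you instead differentiate the Hamiltonian under the integral sign and integrate by parts in space twice (once in the magnetic term, once on $D_j D_0\phi$, using that $D_j$ is anti-selfadjoint because $A_j$ is real). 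Your route is more elementary and mirrors the heuristic discussion preceding the lemma, while the paper's tensorial route keeps the index bookkeeping systematic and yields the identity from a single divergence computation; both are legitimate. The only soft spot in your write-up is the hedged signs (``up to a sign'', ``$\pm$''): carrying the bookkeeping through with $\eta = \diag(-1,1,1,1)$, the Maxwell piece is $\sum_j F_{0j}(\partial_t F_{0j} - \partial_i F_{ij}) = -\sum_j F_{0j}\,\partial^\alpha F_{\alpha j}$ and the commutator piece is $-\sum_j F_{0j}\Im(\phi\overline{D_j\phi})$, so both come out contracted against $F_{0\mu}$, i.e.\ in the form $-\langle F_{0\mu}, \partial^\alpha {F_\alpha}^\mu + \Im(\phi\overline{D^\mu\phi})\rangle$; this matches the final display of the paper's proof (which has $F_{0\mu}$) and differs from the lemma as displayed (which has $F_{\mu 0}$) only by an overall sign on that term, a discrepancy that is immaterial later since the expression is only used inside $\O(\cdot)$ bounds. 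So there is no gap; just pin the signs down rather than leaving them as $\pm$.
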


Observe that this quantity vanishes (as expected) if $\Phi$ solves
(MKG), and in particular if it solves (MKG-CG).

\begin{proof}
We recall the \emph{stress-energy tensor}
$$ \T_{\alpha \beta} := {F_\alpha}^\mu F_{\beta \mu} - \frac{1}{4} \eta_{\alpha \beta} F_{\mu \nu} F^{\mu \nu} + \Re(D_\alpha \phi \overline{D_\beta \phi}) - \frac{1}{2} \eta_{\alpha \beta} \Re(D_\mu \phi \overline{D^\mu \phi}),$$
where $\eta_{\alpha\beta}$ is the Minkowski metric.  From
\eqref{hamil-def} we see that
$$ \H[\Phi[t]] = \int_{\R^3} \T_{00}(x,t)\ dx.$$
Thus
$$ \partial_t \H[\Phi[t]] = -\int_{\R^3} \partial^\alpha \T_{\alpha 0}(x,t)\ dx.$$
To compute the integrand we observe that
$$ \partial_\alpha \Re(u \overline{v}) = \Re(D_\alpha u \overline{v}) + \Re(u \overline{D_\alpha v}).$$
Using this we can expand $\partial^\alpha \T_{\alpha 0}$ as
$$ (\partial^\alpha F_\alpha^\mu) F_{0\mu}
+ {F_\alpha}^\mu \partial^\alpha F_{0\mu} - \frac{1}{2} (\partial_0
F_{\mu \nu}) F^{\mu \nu} +
 \Re(D^\alpha D_\alpha \phi \overline{D_0 \phi})
+  \Re(D_\alpha \phi \overline{D^\alpha D_0 \phi}) -  \Re(D_\mu \phi
\overline{D_0 D^\mu \phi}).$$ Collecting terms and relabeling (using
the anti-symmetry of $F$), we can rewrite the above as
$$ - \frac{1}{2} F^{\mu \nu} (\partial_0 F_{\mu \nu} + \partial_\mu F_{\nu 0} + \partial_\nu F_{0 \mu}) + (\partial^\alpha {F_\alpha}^\mu) F_{0\mu}
+ \Re(D^\alpha D_\alpha \phi \overline{D_0 \phi}) + \Re(D_\alpha
\phi \overline{[D^\alpha, D_0] \phi}).$$ The first term vanishes
from the Bianchi identity $dF = ddA = 0$.  The last term can be
simplified as $[D^\alpha, D_0] = i F^\alpha_0$.  After a little more
collecting terms and relabeling, we obtain
$$ \partial^\alpha \T_{\alpha 0} = (\partial^\alpha {F_\alpha}^\mu + \Im(\phi \overline{D^\mu \phi}) F_{0\mu}
+ \Re(D^\alpha D_\alpha \phi \overline{D_0 \phi})$$ and the Lemma
follows.
\end{proof}

In the Coulomb gauge \eqref{div0-eq}, we can use this lemma to
rewrite \eqref{deriv} as
$$
\frac{d}{dt} \H[I\Phi[t]] = -\langle \partial_j IA_0 - \partial_t I
A_j, \Box I A_j + \partial_j \partial_t IA_0 + \Im(I\phi
\overline{\tilde D_j I\phi}) \rangle - \langle \tilde D_0 I \phi,
\tilde D_j \tilde D_j I\phi - \tilde D_0 \tilde D_0 I\phi \rangle$$
where $\tilde D_\alpha := \partial_\alpha + i (IA_\alpha)$. On the
other hand, by applying $I$ to (MKG) we have
$$ \Box I A_j + \partial_j \partial_t I A_0 + I \Im(\phi \overline{D_j \phi}) = 0$$
and
$$ I (D_0 D_0 \phi - D_j D_j \phi) = 0.$$
Thus one can write \eqref{deriv} as a linear combination of the
commutator expressions
\begin{equation}\label{one}
\bigl\langle \partial_j I A_0 - \partial_t I A_j, I \Im(\phi
\overline{D_j \phi}) - \Im(I\phi \overline{\tilde D_j I\phi})
\bigr\rangle,
\end{equation}
\begin{equation}\label{two}
\bigl\langle \tilde D_0 I \phi, I D_0 D_0 \phi - \tilde D_0 \tilde
D_0 I \phi \bigr\rangle,
\end{equation}
and
\begin{equation}\label{three}
\bigl\langle \tilde D_0 I \phi, I D_j D_j \phi - \tilde D_j \tilde
D_j I \phi \bigr\rangle.
\end{equation}
This should be compared with \eqref{ht}.

We now break up \eqref{one}, \eqref{two}, \eqref{three} further.  We
introduce the nonlinear commutators
\begin{align*}
[I, \N_0] &:= I \N_0(\underline \Phi, \underline \Phi) - \N_0(I\underline \Phi, I\underline \Phi)\\
[I, \N_1] &:= \bigl(I(\partial_t A_0 \underline \Phi) - \partial_t IA_0 I\underline \Phi, I(A_0 \partial_t \underline \Phi) - IA_0 I\partial_t \underline \Phi\bigr) \\
[I, \N_2] &:= I(\underline{\Phi} \nabla_x \underline{\Phi}) - I\underline{\Phi} \nabla_x I\underline{\Phi} \\
[I, \N_3] &:= I(\Phi^3) - (I \Phi)^3
\end{align*}
We also define the ``mollified time derivative''
$$ \D_0 \Phi := (\partial_t I \underline{A}, \tilde D_0 I \phi).$$

In later sections we shall prove the estimates
\begin{equation}\label{rk-1}
\left|\int_{t_0}^t \langle \D_0 \Phi, \O( [I, \N_k] )
\rangle\ dt\right| \lesssim \frac{1}{N^{(s-1/2)-}}
\end{equation}
for $k=0,1,3$, as well as the variant
\begin{equation}\label{rk-2}
\left|\int_{t_0}^t \langle \nabla_x I A_0, \O( [I, \N_k] )
\rangle\ dt\right| \lesssim \frac{1}{N^{(s-1/2)-}}
\end{equation}
for $k = 2,3$.  (In fact, with the exception of the null form
estimate \eqref{rk-1} with $k=0$, we will be able to obtain a decay
of $O(N^{-1/2+})$.) For now we show how these estimates imply
\eqref{one}, \eqref{two}, \eqref{three} are bounded by $O(1/N^{(s-1/2)-})$ as required.

We begin with \eqref{one}.  Consider the contribution of $\partial_j
I A_0$.  We write this term crudely as
$$ \langle \nabla_x I A_0, \O( [I, \N_2] ) + \O( [I, \N_3] )\rangle$$
which is acceptable by \eqref{rk-2}.

Now consider the contribution of $\partial_t I A_j$.  By
\eqref{div0-eq} we may freely insert a projection $\P$ on the left
term of the inner product, and hence on the right by
self-adjointness.  From the definition of the null form $\N_0$ we
can thus write this contribution as
$$ \langle \partial_t I \underline A, \O( [I, \N_0] ) + \O( [I, \N_3] ) \rangle$$
which is acceptable by \eqref{rk-1}.

Now we expand out \eqref{two}.  Observe that $I D_0 D_0 \phi -
\tilde D_0 \tilde D_0 I \phi$ can be expanded as a linear
combination of $[I, \N_1]$ and $[I, \N_3]$, so the claim follows
from \eqref{rk-1}.

Now we expand out \eqref{three}.  From \eqref{div0-eq} we have
$$ I D_j D_j \phi = I \Delta \phi + 2 i I (\P \underline A \cdot \nabla_x \phi) + I(|\underline A|^2 \phi)$$
and
$$ \tilde D_j \tilde D_j I\phi = I \Delta \phi + 2 i (\P I \underline A) \cdot (I \nabla_x \phi) + |I \underline A|^2 I \phi$$
so by the definition of the null form $\N_0$ we have
$$ I D_j D_j \phi - \tilde D_j \tilde D_j I\phi = \O( [I, \N_0] ) + \O( [I, \N_3] )$$
and the claim follows from \eqref{rk-1}.

It remains to prove \eqref{rk-1}, \eqref{rk-2}.  We shall do so in
later sections, but for now we give some estimates on $\D_0 \Phi$.

\begin{lemma}[$\D_0 \Phi$ estimate]\label{sigma-bound-lemma}  We have
\begin{equation}\label{sigma-bound}
\D_0 \Phi \in C\Ball(H^{0,s- }_{[t_0-\delta/2,t_0+\delta/2]} + C^0_t
\L^6_{10}).
\end{equation}
\end{lemma}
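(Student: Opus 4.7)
I would prove this lemma by splitting $\D_0 \Phi = (\partial_t I \underline A, \tilde D_0 I \phi)$ and handling each component separately. The first component $\partial_t I\underline A = I\partial_t \underline A$, together with the linear piece $I\partial_t \phi$ in the decomposition $\tilde D_0 I\phi = I \partial_t \phi + i(IA_0)(I\phi)$, follows immediately by applying $I$ to the estimate \eqref{phit-bound}. Since $I$ is bounded on every $L^p$ and preserves spatial Fourier support, this places $I \partial_t \underline \Phi$ in $C\Ball(H^{0,s-}_{[t_0-\delta,t_0+\delta]} + C^0_t \L^3_2)$, which in turn embeds into the desired space via the Bernstein inclusion \eqref{lpr-bernstein} ($\L^3_2 \subseteq \L^6_{10}$).

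The remaining bilinear term $(IA_0)(I\phi)$ I would handle by decomposing both factors according to \eqref{A0-bound} and \eqref{phi-bound}: write $IA_0 = V_1 + V_2$ with $V_1 \in I^{-1}H^{2,0}_{[t_0-\delta,t_0+\delta]}$ and $V_2 \in C^0_t\L^6_1$, and $I\phi = W_1 + W_2$ with $W_1 \in H^{1,s-}_{[t_0-\delta,t_0+\delta]}$ and $W_2 \in C^0_t\L^6_2$, arranged so that the $H^{s,b}$ pieces have spatial Fourier support bounded away from zero. Expanding gives four cross terms. The purely low-frequency product $V_2 W_2$ lies in $C^0_t \L^3_3 \subseteq C^0_t \L^6_{10}$ by \eqref{lpr-holder} and \eqref{lpr-bernstein}. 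For the mixed terms $V_2 W_1$ and $V_1 W_2$, I would split the output according to its spatial frequency: the low-frequency output lands in $C^0_t \L^6_{10}$ using \eqref{lpr-mult} and \eqref{lpr-holder}, while the high-frequency output is absorbed into $H^{0,s-}$ using that a function in $\L^6_R$ with $R$ bounded lies in $L^\infty_{t,x}$ by Bernstein, together with the temporal regularity inherited from \eqref{phit-bound} and \eqref{A0t-bound} (which prevents loss of the $b$-index under multiplication by these slowly-varying low-frequency factors).

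The main obstacle is the high-high product $V_1 W_1$, which must be placed in $H^{0,s-}$. Here the key is that $V_1$ carries two spatial derivatives in $L^2_t$, since $IV_1 \in H^{2,0} = L^2_t H^2_x$, so by Sobolev embedding $H^2_x \subseteq L^\infty_x$ in three dimensions one essentially has $IV_1 \in L^2_t L^\infty_x$ (with only an $N^{0+}$ loss absorbing the remaining $I^{-1}$ at high frequencies). This is strictly stronger than the $H^{1/2+,0}$ regularity on $\partial_t A_0$ used in Lemma \ref{bilinear-cor}, and crucially $W_1 \in H^{1,s-}$ retains the full $b$-regularity $s-$. A bilinear estimate closely patterned on the proof of \eqref{a0t-phi-I} and \eqref{a0-phit-I}, decomposing $W_1$ into a superposition of free waves via Lemma \ref{wave-general} and applying the Strichartz embeddings \eqref{hsd-strichartz} to the resulting spatial product, then yields $V_1 W_1 \in H^{0,s-}$ with acceptable constants. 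Summing the four contributions completes the proof of \eqref{sigma-bound}.
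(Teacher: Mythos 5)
There is a genuine gap at the heart of your argument, namely the claim that the high-frequency product $V_1W_1$ (and likewise the high-frequency output of your mixed terms) can be placed in $H^{0,s-}$. The space you are given for $A_0$ by \eqref{A0-bound} is $I^{-2}H^{2,0}$, and $H^{2,0}=L^2_tH^2_x$ carries \emph{no} control whatsoever on temporal oscillation or modulation: the function $e^{iMt}g(x)$, $g\in H^2_x$, is admissible on a bounded time interval uniformly in $M$, yet its product with a fixed free wave (or with any $W_1\in H^{1,s-}$) has $H^{0,s-}$ norm growing like $M^{s-}$, since the output inherits modulation $\sim M$ and the target $b$-index is $s->1/2$. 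So no bilinear estimate using only $V_1\in L^2_tL^\infty_x$-type information can produce an output with $b$-index $s-$; in particular, ``patterning on'' \eqref{a0t-phi-I}, \eqref{a0-phit-I} cannot work, because those estimates place the product in spaces with $b$-index $s-1<0$, which is insensitive to exactly the modulation weight you need to control. The free-wave decomposition of $W_1$ via Lemma \ref{wave-general} does not rescue this: multiplication by $e^{it\lambda}$ costs $\langle\lambda\rangle^{s-}$ in $H^{0,s-}$, and the weight $\langle\lambda\rangle^{s-}$ available from $\|W_1\|_{H^{1,s-}}$ only controls $a$ in $L^2_\lambda$, not in the required $L^1_\lambda$. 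The same unproved ``the $b$-index survives multiplication by the slowly-varying factor'' assertion is what carries your mixed terms, so that step is also incomplete as written.

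The paper's proof avoids this issue entirely by \emph{not} trying to keep the $b$-index through a product estimate. It splits $\tilde D_0 I\phi$ at spatial frequency $\sim 10$: the low part is handled by \eqref{idphi} (which comes from the Hamiltonian bound) plus Sobolev, and for the local part $(IA_0\,I\phi)_{\local}$ one uses the crude bound $\langle|\xi|-|\tau|\rangle^{s-}\lesssim|\xi|+|\tau|$ valid on $|\xi|>9$, so that after inserting a smooth time cutoff $\eta$ the $H^{0,s-}$ norm is dominated by $\|\nabla_{x,t}(\eta\, IA_0\, I\phi)\|_{L^2_tL^2_x}$. The Leibniz rule then produces terms involving $\nabla_{x,t}IA_0$ and $\nabla_{x,t}I\phi$, which are estimated by H\"older using \eqref{A0-bound}, \eqref{A0t-bound}, \eqref{phi-bound}, \eqref{phit-bound} and Sobolev. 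Note that the time-derivative bound \eqref{A0t-bound} on $\partial_t A_0$ enters essentially here; it is precisely the information your $V_1W_1$ argument never invokes, and without it the desired modulation regularity of the product is simply not available. To repair your proof you should replace the bilinear-estimate step by this reduction (sacrifice the modulation weight against one spacetime derivative on frequencies $\gtrsim 1$, then Leibniz and H\"older), or otherwise bring $\partial_t A_0$ into the estimate in an equivalent way.
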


\begin{proof}
The bound on $\partial_t I \underline{A}$ follows from
\eqref{phit-bound} and \eqref{lpr-bernstein}. Now we bound $\tilde
D_0 I \phi$.  We split $\tilde D_0 I \phi = (\tilde D_0 I
\phi)_{\low} + (\tilde D_0 I \phi)_{\local}$, where the low term has
Fourier support in $|\xi| \leq 10$ and the local term has Fourier
support in $|\xi| \geq 9$.

The low term is acceptable from \eqref{idphi}
and Sobolev, so we consider the local term.  We split $D_0 I \phi = I \phi_t + i
IA_0 I \phi$.  The $I\phi_t$ component is acceptable from
\eqref{phit-bound}, so it remains to control the local frequency
component $(IA_0 I \phi)_{\local}$ of $IA_0 I \phi$.  Because this
is a lower order term, regularity will not be a major problem, but
there will be some other technical issues related to the time
truncation.

Let $\eta(t)$ be a bump function adapted to $[t_0-\delta,
t_0+\delta]$ which equals 1 on $[t_0-\delta/2, t_0+\delta/2]$.  It
will suffice to show that
$$ \| \eta(t) (IA_0 I \phi)_{\local} \|_{H^{0,s- }} \lesssim 1.$$
From the crude estimate $\langle |\xi| - |\tau| \rangle^{s-}
\lesssim |\xi| + |\tau|$ when $|\xi| > 9$, we have
$$ \| \eta(t) (IA_0 I \phi)_{\local} \|_{H^{0,s-}} \lesssim
\| \nabla_{x,t} (\eta(t) IA_0 I \phi) \|_{L^2_t L^2_x}. $$ By the
Leibnitz rule, it thus suffices to show the quantities
\begin{equation}\label{norms}
\| \eta'(t) IA_0 I \phi \|_{L^2_t L^2_x}, \| \eta(t) (\nabla_{x,t} IA_0) I\phi \|_{L^2_t L^2_x}, \| \eta(t) IA_0 (\nabla_{x,t} I \phi) \|_{L^2_t L^2_x}
\end{equation}
are bounded.

From \eqref{A0-bound}, \eqref{A0t-bound} and Sobolev we have the
crude bound
$$A_0, \nabla_{x,t} A_0 \in C\Ball(L^2_t L^6_x + L^2_t L^3_x)$$
while from \eqref{phi-bound}, \eqref{hsd-energy} and Sobolev we have
$$ I\phi \in C \Ball(L^\infty_t L^6_x) \cap C\Ball(L^\infty_t L^3_x).$$
The first two norms of \eqref{norms} are then bounded by H\"older (since $I$ is
bounded on all the above spaces).  For the last norm we instead use
\eqref{A0-bound}, \eqref{lpr-bernstein}, and Sobolev to obtain
$$ A_0 \in C\Ball(L^2_t L^\infty_x) \cap C\Ball(L^2_t L^6_x)$$
while from \eqref{phit-bound}, \eqref{hsd-energy} and Sobolev we
have
$$ \nabla_{x,t} I \underline \Phi \in C\Ball(C^0_t L^2_x + C^0_t L^3_x).$$
The claim again follows from H\"older.
\end{proof}

The only remaining task is to establish the estimates \eqref{rk-1},
\eqref{rk-2} for various values of $k$.

\section{The cubic commutator $[I, \N_3]$}\label{cubic-sec}

We first prove the estimates \eqref{rk-1}, \eqref{rk-2} for the
cubic commutator $[I, \N_3]$, which is the easiest to handle as
there are no derivatives.  Indeed we will not need the full strength
of the commutator structure here, and we can use very crude Lebesgue
space estimates.  In this case we will obtain a decay of $N^{-1/2+}$
instead of just $\frac{1}{N^{(s-1/2)-}}$.

Smoothly divide $\Phi := \Phi_{\bounded} + \Phi_{\high}$, where
$\Phi_{\high}$ has frequency support on $|\xi| > N/10$ and
$\Phi_{\bounded}$ is supported on $|\xi| < N/5$.

We need the following Strichartz-type estimates.

\begin{lemma}[Strichartz estimates]\label{cubic-est}
On the slab $[t_0,T] \times \R^3$, we have
\begin{align}
\D_0 \Phi, \nabla_x IA_0 &\in C\Ball(C^0_t L^6_x + C^0_t L^2_x + L^2_t L^{9/2}_x )\label{sigma-cubic}\\
\Phi &\in C\Ball(L^3_t L^6_x \cap L^4_t L^6_x \cap L^{8/3}_t L^8_x) \label{838-cubic}\\
\Phi_{\high} &\in CN^{-1/2+} \Ball(C^0_t L^2_x) \cap \Ball(C^0_t L^3_x) \cap
CN^{-1/2+}
\Ball(L^4_t L^4_x) \label{i2-cubic}
\end{align}
\end{lemma}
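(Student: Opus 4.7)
The plan is to verify each of the three claimed embeddings by combining the spacetime bounds \eqref{phi-bound}--\eqref{A0t-bound} with Strichartz \eqref{hsd-strichartz}, Sobolev embedding on $\R^3$, and elementary sup-estimates on the multiplier symbol $m(\xi)$; the hypothesis $s > \sqrt{3}/2$ (in particular $s > 5/6$) is what makes every such sup-estimate close.

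For \eqref{sigma-cubic}, the $\D_0 \Phi$ half follows directly from Lemma \ref{sigma-bound-lemma}: the $C^0_t \L^6_{10}$ summand sits in $C^0_t L^6_x$ and the $H^{0,s-}$ summand embeds in $C^0_t L^2_x$ via \eqref{hsd-energy}. For $\nabla_x I A_0$, split $A_0$ using \eqref{A0-bound}: the $C^0_t \L^6_1$ piece yields $C^0_t L^6_x$ after applying $\nabla_x I$ (using \eqref{nabla}), and the $I^{-2} H^{2,0}$ piece is handled via the symbol inequality $\|f\|_{H^{5/6}_x} \lesssim \|If\|_{H^1_x}$ applied to $f = \nabla_x I A_0$ (the bound reduces to showing $\sup_\xi \langle\xi\rangle^{5/6} / (m(\xi)\langle\xi\rangle) = O(1)$, which holds precisely when $s > 5/6$). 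Since $If = \nabla_x I^2 A_0 \in L^2_t H^1_x$ and $H^{5/6}_x \subseteq L^{9/2}_x$ by Sobolev, we conclude $\nabla_x I A_0 \in L^2_t L^{9/2}_x$.

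For \eqref{838-cubic}, split $\Phi = (A_0, \underline\Phi)$. For $\underline\Phi$, the low-frequency $C^0_t \L^6_2$ piece from \eqref{phi-bound} lies in $L^\infty_t L^6_x$ and hence in all three targets by H\"older in time on the bounded slab; the $I^{-1} H^{1,s-}$ piece embeds into $H^{s,s-}$, which then sits in $L^3_t L^6_x \cap L^4_t L^6_x \cap L^{8/3}_t L^8_x$ by three applications of Strichartz \eqref{hsd-strichartz} (all three pairs are wave admissible with scaling exponent at most $s$, since $s > 3/4$). For $A_0$, a symbol calculation against $I^{-2} H^{2,0}$ gives $A_0 \in L^2_t H^{2s}_x \subseteq L^2_t L^9_x$ (using $2s > 3/2$), while Lemma \ref{hamil-control} applied to $IA_0$ (combined with the $C^0_t \L^6_1$ portion of \eqref{A0-bound}) yields $A_0 \in C^0_t L^6_x$. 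The targets $L^3_t L^6_x$ and $L^4_t L^6_x$ come for free from the $L^\infty_t L^6_x$ bound by H\"older in time, and $L^{8/3}_t L^8_x$ is obtained by interpolating $L^2_t L^9_x$ against $L^\infty_t L^6_x$ at $\theta = 3/4$.

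For \eqref{i2-cubic}, we estimate $\Phi_{\high}$ by comparing the weight $1/m(\xi)$ against the regularity of $I\Phi_{\high}$ on the spectral support $|\xi| > N/10$, and taking sups. The $C^0_t L^2_x$ bound with loss $N^{-1/2+}$ follows (in fact with $N^{-1}$) from $I \Phi_{\high} \in C^0_t H^1_x$ and the elementary estimate $\sup_{|\xi| > N/10} (m(\xi)\langle\xi\rangle)^{-1} \lesssim N^{-1}$; the $C^0_t L^3_x$ bound without loss follows from $H^{1/2} \subseteq L^3$ and an analogous sup-estimate that is $O(1)$. For $\underline \Phi_{\high} \in N^{-1/2+} L^4_t L^4_x$ we apply the Strichartz embedding $H^{1/2, s-} \subseteq L^4_t L^4_x$ together with a weight-ratio sup of $N^{-1/2}$. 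The hardest piece is $A_{0,\high} \in N^{-1/2+} L^4_t L^4_x$: since $A_0$ satisfies only an elliptic equation, there is no direct Strichartz bound, so one must interpolate the $L^2_t$-in-time control from $I^{-2} H^{2,0}$ (giving $L^2_t H^{3/4}_x \hookrightarrow L^2_t L^4_x$ with a strong $N$-gain) against the $L^\infty_t$-in-time control from Lemma \ref{hamil-control} (giving $L^\infty_t H^s_x \hookrightarrow L^\infty_t L^4_x$ with a milder $N$-gain); the $\theta = 1/2$ interpolant gives $L^4_t L^4_x$ with loss beating $N^{-1/2+}$.
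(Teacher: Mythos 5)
Your argument is correct, and for $\D_0\Phi$, $\nabla_x IA_0$ and for the $\underline\Phi$ components it is essentially the paper's proof (Lemma \ref{sigma-bound-lemma} plus $H^{0,s-}\subseteq C^0_t L^2_x$; the embedding $I^{-1}H^{1,0}\subseteq L^2_t L^{9/2}_x$ using $s>5/6$; the Strichartz chain $I^{-1}H^{1,s-}\subseteq H^{s,1/2+}\subseteq L^3_tL^6_x, L^4_tL^6_x, L^{8/3}_tL^8_x$; and the weight-transfer bound $\|\underline\Phi_{\high}\|_{H^{1/2,s-}}\lesssim N^{-1/2}\|I\underline\Phi\|_{H^{1,s-}}$ followed by $H^{1/2,s-}\subseteq C^0_tL^2_x, C^0_tL^3_x, L^4_tL^4_x$). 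Where you genuinely diverge is in the treatment of $A_0$: the paper takes $A_{0}$ (medium/high) in $L^2_tL^\infty_x$ via $I^{-2}H^{2,0}\subseteq H^{3/2+,0}$, gets the $C^0_t$ endpoints $C^0_tL^2_x\cap C^0_tL^3_x$ (with the $N^{-1/2}$ gain on the high part) by the fundamental theorem of calculus from \eqref{A0t-bound}, and interpolates; you instead use the sup-norm-in-time information coming from the almost-conserved Hamiltonian (i.e.\ \eqref{sup-hamil-2} via Lemma \ref{hamil-control}, which indeed gives $\nabla_x IA_0\in C\Ball(C^0_tL^2_x)$ and hence $IA_{0,\high}\in C\Ball(C^0_tH^1_x)$) together with the $L^2_t$-in-time control from \eqref{A0-bound}, interpolating $L^2_tL^9_x$ against $L^\infty_tL^6_x$, and $L^2_tL^4_x$ against $L^\infty_tL^4_x$ for the high piece. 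Both routes close; yours avoids the FTC/Duhamel step and even yields $N^{-1}$ rather than $N^{-1/2}$ for $\|A_{0,\high}\|_{C^0_tL^2_x}$, at the price of leaning on the a priori Hamiltonian bound, which is legitimately in force on $[t_0-\delta/2,t_0+\delta/2]$. One small correction: Lemma \ref{hamil-control} applied to $I\Phi$ controls $IA_0$, not $A_0$, so it does not by itself give $A_0\in C^0_tL^6_x$ for the high frequencies (where $I^{-1}$ is not bounded on $L^6$ uniformly in $N$); the bound $A_0\in C\Ball(C^0_t\dot H^1_x)\subseteq C\Ball(C^0_tL^6_x)$ that you want is \eqref{a0-bound}/\eqref{a-sob}, which comes from the elliptic estimate \eqref{cz} rather than from the Hamiltonian, and with that citation (or by replacing the $L^\infty_tL^6_x$ endpoint by $L^2_tL^\infty_x$ plus $C^0_tL^2_x$ as in the paper) your interpolation for \eqref{838-cubic} goes through unchanged.
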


\begin{proof}
The bound on $\D_0 \Phi$ comes from Lemma \ref{sigma-bound-lemma}
and the crude estimate $H^{0,s-} \subseteq C^0_t L^2_x$.  The bound
on $\nabla_x IA_0$ comes from \eqref{A0-bound} and the crude
embedding $I^{-1} H^{1,0} \subseteq L^2_t L^{9/2}_x$ arising from Sobolev embedding (since $s>5/6$).

For $\underline \Phi$, the bounds in \eqref{838-cubic} come from
\eqref{phi-bound}, \eqref{lpr-bernstein}, and the Strichartz
estimates
$$ I^{-1} H^{1,s- } \subseteq H^{s,1/2+} \subseteq L^3_t L^6_x, L^4_t L^6_x, L^{8/3}_t L^8_x $$
from \eqref{hsd-strichartz}.

For $A_0$, we argue differently.  The low frequency component is
acceptable from \eqref{A0-bound}, \eqref{lpr-bernstein}.  For the
medium and high frequency components, the estimate \eqref{A0-bound}
and Sobolev gives $A_0 \in C\Ball(L^2_t L^\infty_x)$, while
\eqref{A0t-bound} and the fundamental theorem of calculus and Sobolev embedding gives $A_0
\in C\Ball(C^0_t L^2_x \cap C^0_t L^3_x)$.  The claim then follows by
interpolation.

For $\underline \Phi$, the bounds in \eqref{i2-cubic} come from
\eqref{phi-bound}, the observation that
$$ \| \underline \Phi_{\high} \|_{H^{1/2,s- }_{[t_0-\delta,t_0+\delta]}} \lesssim N^{-1/2}
\| \underline \Phi \|_{I^{-1} H^{1,s- }_{[t_0-\delta,t_0+\delta]}},
$$ and the Strichartz embeddings (from \eqref{hsd-strichartz})
$$ H^{1/2,s- } \subseteq C^0_t L^2_x, C^0_t L^3_x, L^4_t L^4_x.$$

For $A_0$, we have from \eqref{A0-bound} and Sobolev that
$$ \| A_{0,\high} \|_{L^2_t L^\infty_x} \lesssim \| A_{0,\high} \|_{H^{3/2+,0}_{[t_0-\delta,t_0+\delta]}} \lesssim N^{-1/2+}
\| A_{0,\high} \|_{I^{-2} H^{2,0}_{[t_0-\delta,t_0+\delta]}} \lesssim
N^{-1/2+},$$ while from \eqref{A0t-bound}, the fundamental theorem of
calculus we have
\begin{align*}
\| A_{0,\high} \|_{C^0_t L^2_x} &\lesssim \| A_{0,\high}(t_0)
\|_{L^2_x} +
\| \partial_t A_{0,\high} \|_{H^{0,0}_{[t_0-\delta,t_0+\delta]}} \\
&\lesssim N^{-1/2}
( \| \nabla_x A_0(t_0)\|_{L^2_x} + \| \partial_t A_{0,\high} \|_{I^{-1}H^{1,0}_{[t_0-\delta,t_0+\delta]}})\\
&\lesssim N^{-1/2}.
\end{align*}
The claims then follow from interpolation.
\end{proof}

We now prove \eqref{rk-1}, \eqref{rk-2}.  From \eqref{sigma-cubic}
and H\"older it will suffice to prove that
$$ [I, \N_3] \in C \Ball(L^1_t L^{6/5}_x \cap L^1_t L^2_x \cap L^2_t L^{9/7}_x).$$
We expand out $[I, \N_3]$ as the sum of eight terms
$$ [I, \N_3] = \sum_{a,b,c \in \{bounded, high \}} I(\Phi_a \Phi_b \Phi_c) - (I\Phi_a) (I \Phi_b) (I \Phi_c).$$
When $a=b=c=bounded$ then $I$ acts like the identity everywhere, and
the summand vanishes.  Thus it will suffice to show that
$$ I(\Phi_a \Phi_b \Phi_c), (I\Phi_a) (I \Phi_b) (I \Phi_c) \in C \Ball(L^1_t L^{6/5}_x) \cap C \Ball(L^1_t L^2_x) \cap C \Ball(L^2_t L^{9/7}_x)$$
whenever at least one of $a,b,c$ is equal to $high$.  By symmetry we
may assume $c=high$.  But then the claim follows from
 \eqref{838-cubic}, \eqref{i2-cubic} and H\"older.
(The operator $I$ and the projections $\Phi \mapsto
\Phi_{\bounded}$, $\Phi \mapsto \Phi_{\high}$ are bounded on every
translation-invariant Banach space.  To get $L^2_t L^{9/7}_x$, one places two factors in $L^4_t L^6_x$, and the last factor in an interpolant of $C^0_t L^2_x$ and $C^0_t L^3_x$.)  This completes the proof of
\eqref{rk-1}, \eqref{rk-2} for the cubic commutator $[I, \N_3]$.

\begin{remark}
It is in fact possible to use more Strichartz estimates to improve
the estimate for $[I, \N_3]$ even further to $N^{-1+}$; this would
be consistent with the results for the cubic nonlinear wave equation
in \cite{kpv:gwp}.  The numerology is as follows.  As we saw above,
it suffices to put the three factors $\Phi$ in $\N_3 = \Phi^3$ in
$L^3_t L^6_x$.  The Strichartz embedding \eqref{hsd-strichartz}
allows this if $\Phi$ is in $H^{2/3,1/2+}$.  But $\Phi$ is in
$H^{1,s-}$ (for medium frequencies at least), so there is 1/3 of a
derivative to spare.  Since there are three factors of $\Phi$, we
thus see that there is about a full derivative of surplus regularity
in $[I, \N_3]$.  From \eqref{gain} one then expects to extract a
gain\footnote{Admittedly, in the above argument only one of the
three factors $\Phi$ could be assumed to be high frequency, however
one should still be able to obtain the full gain of $N^{-1+}$ by
playing around with the Strichartz exponents (e.g. putting the high
frequency factor in $L^2_t L^{\infty-}_x$ and the other two in
$C^0_t L^{2+}_x$), or perhaps by using commutator estimates as we do
with the bilinear commutators below.} of $N^{-1+}$, in principle at
least.
\end{remark}
\section{Frequency interactions of bilinear commutators}\label{freq-sec}

In the remainder of the paper we will prove the estimates
\eqref{rk-1} or \eqref{rk-2} for the bilinear commutators $[I,
\N_0]$, $[I, \N_1]$, $[I, \N_2]$.  Ignoring derivatives and null
forms (which will have no bearing on the discussion in this
section), all the expressions on the left-hand side have the form
\begin{equation}\label{com-1}
\left|\int_{t_0}^T \langle u(t), I(v(t)w(t)) - I(v(t)) I(w(t))
\rangle\ dt\right|.
\end{equation}
On the other hand, the functions $u$, $v$, $w$ which appear here
have different behavior at low, medium and high frequencies.  The
purpose of this section is to decompose the above trilinear
expressions in terms of these three frequency components.

We smoothly split $u = u_{\low} + u_{\med} + u_{\high}$, where $\hat
u_{\low}$ is supported on $|\xi| < 20$, $\hat u_{\med}$ is supported
on $10 < |\xi| < N/5$, and $\hat u_{\high}$ is supported on $|\xi| >
N/10$.  Similarly decompose $v$ and $w$.  We can then split
\eqref{com-1} into 27 terms of the form
\begin{equation}\label{com-piece}
\left|\int_{t_0}^T \langle u_a(t), I(v_b(t) w_c(t)) - I(v_b(t))
I(w_c(t)) \rangle\ dt\right|
\end{equation}
where $a,b,c \in \{low, med, high \}$.

This may look like a lot of terms, but fortunately most of these
terms are zero.  For instance, if neither of $b$ or $c$ is high,
then $I$ acts like the identity and \eqref{com-piece} vanishes.  So
we may assume at least one of $b$, $c$ is high.

Next, we claim that if one of $a, b, c$ is low frequency, then
\eqref{com-piece} vanishes unless the other two indices is high
frequency.  To see this, suppose (for instance) that $a$ was low
frequency and $b$ was low or med frequency. Then we can integrate by
parts and rewrite the above as
$$ \left|
\int_{t_0}^T \langle I(u_a(t)), v_b(t) w_c(t) \rangle - \langle
I(u_a(t) \overline{Iv_b(t)}), w_c(t) \rangle\ dt \right|
$$
at which point all the $I$s act like the identity and so
\eqref{com-piece} vanishes.  Similarly for other permutations.

From this discussion we see that of the 27 terms in the
decomposition, only 9 are non-zero, and they are listed in Figure
\ref{freq-fig}.

\begin{figure}
\begin{tabular}{|l|l|l|l|} \hline
$a$ & $b$ & $c$ & \\
\hline
low & high & high & Lemma \ref{low-freq}\\
high & low & high & Lemma \ref{low-freq}, Lemma \ref{com}\\
high & high & low & Lemma \ref{low-freq}, Lemma \ref{com}\\
\hline
med/high & med & high & Lemma \ref{tri-lemma}\\
med/high & high & high & \eqref{theta}, Lemma \ref{tri-lemma}\\
med/high & high & med & \eqref{theta}, Lemma \ref{tri-lemma}\\
\hline
\end{tabular}
\caption{List of possible frequency interactions for \eqref{com-1},
and the Lemmas and estimates which are useful in each case (although
for the null form $\N_0$ the analysis is more complicated than the
above table suggests).  In most cases, it will not be so important
to distinguish between the cases $a=\med$ and
$a=\high$.  One can also eliminate the med-med-high case by Fourier support considerations, though this does not significantly simplify the argument.}\label{freq-fig}
\end{figure}

We now discuss qualitatively how each of the six cases in Figure
\ref{freq-fig} will be estimated.  In all of the cases, the main
challenge in proving \eqref{rk-1} or \eqref{rk-2} is to obtain the
decay factor $\frac{1}{N^{(s-1/2)-}}$; it is relatively
straightforward to prove these estimates without this decay factor,
but then Proposition \ref{main-prop} will only let us control the
Hamiltonian for times $T = O(1)$, which will not give us global
well-posedness for any $H^s_x$.

To obtain this decay we must use the fact that the high frequencies
are small if measured in rough norms.  In particular, we will make
frequent use of the simple estimate
\begin{equation}\label{gain}
\| u \|_{H^{s-\theta}_x} \lesssim N^{-\theta} \|u \|_{H^s_x}
\end{equation}
valid for all $u$ such that $\hat u$ is only supported in ``high''
frequencies $|\xi| \gtrsim N$, and all $\theta \geq 0$ and $s \in
\R$.  Thus if there is a high frequency term present, we can
sacrifice some of its regularity to obtain the desired gain in $N$.

This will be fairly straightforward in the first three cases of
Figure \ref{freq-fig}, when there are two high frequency terms,
because the low frequency term is smooth and easily estimated.  In
fact for these low frequency cases one can usually improve the decay
estimate to $N^{-1/2+}$ or better.  We use the following two lemmas
to handle the low frequency cases.

\begin{lemma}\label{low-freq}  We have
$$ \eqref{com-1}
\lesssim \| u \|_{L^{q_1}_t H^{s_1}_x} \| v \|_{L^{q_2}_t \L^p_R} \|
w \|_{L^{q_3}_t H^{s_3}_x}$$ for any $u \in L^{q_1}_t H^{s_1}_x$, $v \in L^{q_2}_t \L^p_R$, $w \in L^{q_3}_t H^{s_3}_x$ with $1 \leq p,q_1,q_2,q_3 \leq
\infty$, $1/q_1 + 1/q_2 + 1/q_3 \leq 1$, $s_1 + s_3 \geq 0$, and $R
= O(1)$.  Similarly for permutations of $u$, $v$, $w$.
\end{lemma}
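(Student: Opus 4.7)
My plan is to reduce to a fixed-time spatial inequality by H\"older in $t$ (legitimate because $1/q_1+1/q_2+1/q_3\le 1$), and then bound the two summands $\langle u,I(vw)\rangle$ and $\langle u,(Iv)(Iw)\rangle$ separately, without attempting any commutator cancellation. The only tools needed are the pointwise bound $|m(\xi)|\le 1$ (so that $I$ is a contraction on every $H^\sigma_x$), and the product embedding \eqref{lpr-mult}, which gives $\|\phi\psi\|_{H^\sigma_x}\lesssim\|\phi\|_{\L^p_R}\|\psi\|_{H^\sigma_x}$ for bounded $R$ and all real $\sigma$.

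In the stated ordering with $v\in\L^p_R$, the assumption $R=O(1)\ll N$ forces $m\equiv 1$ on the Fourier support of $v$, so $Iv=v$. Dualizing $u$ against the product, using that $I$ is a contraction on $H^{-s_1}_x$, and using $s_1+s_3\ge 0$ to embed $H^{s_3}_x\subseteq H^{-s_1}_x$, one gets $|\langle u,I(vw)\rangle|\le\|u\|_{H^{s_1}_x}\|vw\|_{H^{s_3}_x}\lesssim\|u\|_{H^{s_1}_x}\|v\|_{\L^p_R}\|w\|_{H^{s_3}_x}$ by \eqref{lpr-mult}. The second summand $\langle u,(Iv)(Iw)\rangle=\langle u,v(Iw)\rangle$ is bounded identically, with the factor of $I$ on $w$ absorbed by the contraction property on $H^{s_3}_x$. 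H\"older in $t$ now completes the proof in this ordering.

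For the permutations one uses self-adjointness of $I$ to move it onto the low-frequency factor, where it acts as the identity. For instance, when $u\in\L^p_R$ with $s_2+s_3\ge 0$, one has $\langle u,I(vw)\rangle=\langle Iu,vw\rangle=\langle u,vw\rangle=\langle u\bar v,w\rangle$, which is bounded by $\|u\bar v\|_{H^{s_2}_x}\|w\|_{H^{s_3}_x}\lesssim\|u\|_{\L^p_R}\|v\|_{H^{s_2}_x}\|w\|_{H^{s_3}_x}$ using $H^{s_2}_x\subseteq H^{-s_3}_x$ and \eqref{lpr-mult}; the piece $\langle u,(Iv)(Iw)\rangle=\langle u\overline{Iv},Iw\rangle$ is analogous. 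The case $w\in\L^p_R$ is the $v\leftrightarrow w$ reflection of the first. Overall there is no real obstacle here: the lemma needs no commutator gain at all, only that $I$ is a contraction on every Sobolev space and that \eqref{lpr-mult} absorbs the low-frequency factor into an adjacent $H^\sigma_x$ norm.
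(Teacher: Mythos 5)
Your proof is correct, and it reaches the conclusion by a slightly different mechanism than the paper. Both arguments share the same skeleton: H\"older in time (using $1/q_1+1/q_2+1/q_3\le 1$), no attempt at commutator cancellation, and the observation that the factors of $I$ can be discarded (either because $I$ acts as the identity on the $\L^p_R$ factor or because it is bounded uniformly in $N$ on every $H^\sigma_x$). Where you differ is in the core fixed-time estimate: the paper reduces to $|\langle u, vw\rangle|\lesssim \|u\|_{H^{s_1}_x}\|v\|_{\L^\infty_R}\|w\|_{H^{-s_1}_x}$ and proves it from scratch by Littlewood--Paley decomposition of $u$ and $w$, exploiting that the bounded Fourier support of $v$ forces the two dyadic frequencies to be comparable, followed by Cauchy--Schwarz and almost orthogonality; you instead dualize and invoke the multiplier estimate \eqref{lpr-mult} together with the embedding $H^{s_3}_x\subseteq H^{-s_1}_x$ coming from $s_1+s_3\ge 0$. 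These are in fact equivalent statements (the paper's spatial inequality is precisely the dual formulation of \eqref{lpr-mult} at regularity $-s_1$, after using \eqref{lpr-bernstein} to pass to $p=\infty$), so your route is a legitimate shortcut that recycles the Section 2 machinery (Leibnitz, duality, interpolation) rather than re-deriving the frequency-comparability argument; the paper's version is self-contained and makes visible the transfer of the $s_1$ derivatives from $u$ to $w$, which is the real content behind the hypothesis $s_1+s_3\ge 0$. Your treatment of the permutations (moving $I$ onto the low-frequency factor by self-adjointness, where it is the identity, and noting that complex conjugation preserves the relevant Fourier support and norms) matches the paper's remark that the trilinear pairing is essentially symmetric, and is carried out correctly.
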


\begin{proof}
By \eqref{lpr-bernstein} we may take $p=\infty$; by lowering $s_3$
if necessary we may assume $s_3 = -s_1$. By a H\"older in time (and
discarding all appearances of the operator $I$, which is bounded on
every Lebesgue and Sobolev space) it suffices to prove the spatial
estimate
$$ | \langle u, vw \rangle|
\lesssim \| u \|_{H^{s_1}_x} \| v \|_{\L^\infty_R} \| w
\|_{H^{-s_1}_x}.$$ We perform a Littlewood-Paley  decomposition $u =
\sum_{k \geq 0} u_k$, where $\hat u_k$ is supported in the region
$\langle \xi \rangle \sim 2^k$.  Similarly split $w = \sum_{k' \geq
0} w_{k'}$.  Observe from the Fourier support of $v$ that $\langle
u_k, v w_{k'} \rangle$ vanishes unless $k' = k + O(1)$.  By H\"older
we may therefore estimate the left-hand side by
$$ \sum_{k,k' \geq 0: k' = k + O(1)} \| u_k \|_{L^2_x} \| v \|_{\L^\infty_R} \| w_{k'} \|_{L^2_x}$$
which is comparable to
$$ \| v \|_{\L^\infty_R} \sum_{k,k' \geq 0: k' = k + O(1)} \| u_k \|_{H^{s_1}_x}  \| w_{k'} \|_{H^{-s_1}_x}.$$
The claim then follows from Cauchy-Schwarz and the almost
orthogonality of the $u_k$ and of the $w_{k'}$ in Sobolev norms.

The claim for permutations follows since the expression $\langle
u(t), v(t) w(t) \rangle$ is essentially invariant under permutations
(the conjugation being irrelevant for the above norms).
\end{proof}

This lemma does not give the decay of $\frac{1}{N^{ ( s - 1/2 )-}}$
directly, but we shall combine it with \eqref{gain} to do so if
there is enough surplus regularity in the $u$ and $w$ variables.
However, even when this surplus regularity is unavailable we can
still obtain this decay if there is a commutator structure. More
precisely:

\begin{lemma}[Commutator estimate]\label{com}  We have
$$ \eqref{com-1} \lesssim N^{-1} \| u \|_{L^{q_1}_t L^2_x} \| v \|_{L^{q_2}_t \L^p_R} \| Iw \|_{L^{q_3}_t L^2_x}$$
for any $u \in L^{q_1}_t L^2_x$, $v \in L^{q_2}_t \L^p_R$, $w \in L^{q_3}_t
L^2_x$ with $1 \leq p,q_1,q_2,q_3 \leq \infty$, $1/q_1 + 1/q_2 +
1/q_3 \leq 1$, and $R = O(1)$.
\end{lemma}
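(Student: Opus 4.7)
The plan is to reduce, by H\"older in time, to a spatial inequality at each fixed $t$, and then to exploit the smoothness of the Fourier multiplier symbol $m$ at scale $N$ to extract the commutator gain $N^{-1}$. By the hypothesis $1/q_1+1/q_2+1/q_3\leq 1$ and a H\"older in the time variable, it suffices to prove
\[
 |\langle u, I(vw) - (Iv)(Iw)\rangle_{L^2_x}| \lesssim N^{-1} \|u\|_{L^2_x}\|v\|_{\L^p_R}\|Iw\|_{L^2_x}.
\]
Since $v$ has Fourier support in $\{|\xi|\leq R\}$ and $R = O(1) \leq N$, the operator $I$ acts as the identity on $v$, so $I(vw)-(Iv)(Iw) = I(vw) - v\cdot Iw = [I,v]w$ is a genuine operator commutator; this is the source of all subsequent cancellation.

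I now pass to the Fourier side. Writing
\[
 \widehat{[I,v]w}(\xi) = (2\pi)^{-3}\int \bigl(m(\xi)-m(\eta)\bigr)\,\hat v(\xi-\eta)\,\hat w(\eta)\,d\eta,
\]
and using that $\hat v$ is supported on $|\xi-\eta|\leq R\leq N$ (so that $m(\xi-\eta)=1$), the key observation is that $m$ is a smooth symbol of order $0$ at scale $N$. Directly from the definition of $m$ (constant on $|\eta|\leq N$, transitioning smoothly over the annulus $|\eta|\sim N$, equal to $(|\eta|/N)^{s-1}$ on $|\eta|\geq 2N$), one has
\[
 |\nabla m(\eta)| \lesssim \frac{m(\eta)}{N},
\]
together with the comparability $m(\eta+\zeta)\sim m(\eta)$ for $|\zeta|\leq R = O(1)$. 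Combining these yields the pointwise commutator estimate
\[
 |m(\xi)-m(\eta)| \lesssim \frac{R}{N}\,m(\eta), \qquad |\xi-\eta|\leq R.
\]
Factoring out $m(\eta)\hat w(\eta) = \widehat{Iw}(\eta)$ produces the pointwise Fourier bound
\[
 |\widehat{[I,v]w}(\xi)| \lesssim \frac{1}{N}\bigl(|\hat v| \ast |\widehat{Iw}|\bigr)(\xi).
\]

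For the last step, I pair against $\hat u$, apply Cauchy--Schwarz in the inner variable, and use Plancherel; the problem reduces to bounding $\|\hat v\|_{L^1(B_R)}$ by $\|v\|_{\L^p_R}$. For $p\leq 2$ this is immediate from Cauchy--Schwarz on the bounded support $B_R$ combined with Plancherel. For general $p$ (and in particular $p=\infty$), I first use Bernstein (available since $R=O(1)$) to reduce to $p=\infty$, and then pass to the dual formulation
\[
 \langle u, [I,v]w\rangle = \langle v,\, Iu\cdot w - u\cdot Iw\rangle
\]
(valid by self-adjointness $I^{*}=I$). Since $v$ is Fourier-supported on $B_R$, only the frequency projection $P_R(Iu\cdot w - u\cdot Iw)$ couples with $v$, and precisely the same symbol estimate on $m$ shows $|\widehat{P_R(Iu\cdot w - u\cdot Iw)}(\xi)| \lesssim (R/N)\mathbf{1}_{|\xi|\leq R}(|\hat u|\ast|\widehat{Iw}|)(\xi)$, which by Cauchy--Schwarz on the convolution and Bernstein on the bounded Fourier support yields the required $L^{p'}_x$-bound of order $N^{-1}\|u\|_{L^2_x}\|Iw\|_{L^2_x}$.

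The main obstacle is the endpoint $p=\infty$: the Wiener algebra norm $\|\hat v\|_{L^1}$ is not controlled by $\|v\|_{L^\infty_x}$ in general (take $v$ a constant), so the direct Fourier approach fails and one must switch to the dual side as above. A second technical point is that the kernel of $I$ does not have a finite first moment (since $m-1$ does not vanish at infinity), so the estimate cannot be derived by a physical-side commutator calculation with $|v(y)-v(x)|\lesssim \|\nabla v\|_\infty|y-x|$; the entire argument must be carried out on the Fourier side, where the $1/N$ gain is visible as a consequence of the smoothness of the symbol $m$.
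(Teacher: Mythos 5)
Your reduction to a fixed-time estimate by H\"older in time, the observation that $Iv=v$ so that the expression is a genuine commutator, and the symbol bound $|m(\xi)-m(\eta)|\lesssim N^{-1}m(\eta)$ for $|\xi-\eta|\leq R$ are all correct, and they are in the same spirit as the paper's proof (which likewise reduces to $p=\infty$ via \eqref{lpr-bernstein} and uses the mean value theorem on $m$). The branch $p\leq 2$, where you control $\|\hat v\|_{L^1}$ by H\"older on the compact Fourier support and Hausdorff--Young, is fine as far as it goes.

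The gap is in the case everything reduces to, $p=\infty$. In your dual formulation $\langle v,\,(Iu)w-u(Iw)\rangle$ you need $\|P_R\bigl((Iu)w-u(Iw)\bigr)\|_{L^1_x}\lesssim N^{-1}\|u\|_{L^2_x}\|Iw\|_{L^2_x}$, and you assert this follows from the pointwise Fourier bound $|\widehat{P_R(\cdots)}(\xi)|\lesssim N^{-1}\mathbf{1}_{|\xi|\leq R}\,(|\hat u|\ast|\widehat{Iw}|)(\xi)$ ``by Cauchy--Schwarz and Bernstein''. It does not: Cauchy--Schwarz gives an $L^\infty_\xi$ bound on the Fourier transform, which together with the compact Fourier support controls $L^q_x$ norms only for $q\geq 2$ (Bernstein raises Lebesgue exponents, never lowers them), and no pointwise bound on a Fourier transform controls an $L^1_x$ norm. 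So the duality step merely restates the trilinear estimate to be proved; note the crude bound $\|(Iu)w\|_{L^1_x}+\|u(Iw)\|_{L^1_x}\lesssim\|u\|_{L^2_x}\|w\|_{L^2_x}$ has neither the $N^{-1}$ gain nor $\|Iw\|_{L^2_x}$ in place of $\|w\|_{L^2_x}$, so the missing $L^1_x$ estimate genuinely needs the bilinear commutator structure, not just symbol size. The paper closes exactly this point by first Littlewood--Paley localizing $w$ to dyadic shells $|\xi|\sim M\gtrsim N$ (the commutator vanishes for $M\ll N$), writing the localized symbol difference as $M^{-1}m(M)\,c(\xi_1,\xi_2)$ with $c$ a bump-adapted symbol, and proving the kernel bound $|\check c(y,z)|\lesssim M^3\langle y\rangle^{-100}\langle Mz\rangle^{-100}$, after which Minkowski and H\"older place $v$ in $L^\infty_x$ and $w$ in $L^2_x$ with the factor $M^{-1}m(M)$, and one sums over $M$. (An alternative repair is to tile frequency space into cubes of side $O(R)$, on which $m$ is essentially constant, and use almost orthogonality as in Lemma \ref{low-freq}.) Your closing remark that a physical-space argument is impossible because the kernel of $I$ lacks a first moment is also somewhat misleading: the paper's proof is precisely a physical-space kernel estimate, applied not to $I$ itself but to the frequency-localized commutator symbol, whose kernel is rapidly decreasing.
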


\begin{proof}
By \eqref{lpr-bernstein} it suffices to take $p=\infty$.  By
H\"older's inequality in time, it thus suffices to show the spatial
commutator estimate
$$ |\langle u, I(vw) - vIw \rangle| \lesssim N^{-1} \| u \|_{L^2_x} \| v\|_{\L^\infty_R} \|Iw\|_{L^2_x}$$
for all $u \in L^2_x$, $v \in \L^\infty_R$, $w \in L^2_x$.  (Observe
that $Iv = v$ for $v \in L^\infty_R$).

Let us first assume that $\hat w$ is supported in the annulus $|\xi|
\sim M$ for some dyadic $M$; we will sum in $M$ later.  If $M \ll N$
then $I(vw)-v(Iw) = vw - vw = 0$, so we may assume $M \gtrsim N$.
Under this assumption, we will prove that
\begin{equation}\label{m-comm}
 |\langle u, I(vw) - vIw \rangle| \lesssim M^{-1} m(M) \| u
\|_{L^2_x} \| v\|_{\L^\infty_R} \|w\|_{L^2_x} \lesssim M^{-1} \|
u\|_{L^2_x} \|v\|_{\L^\infty_R} \|w\|_{L^2_x};
\end{equation}
the claim then follows for general $w$ by a Littlewood-Paley
decomposition and the triangle inequality.

It remains to show \eqref{m-comm}.  Fix $M$.  We use Plancherel to
write
$$ (I(vw) - vIw) \hat{}(\xi) = \int_{\xi_1 + \xi_2 = \xi} (m(\xi_1+\xi_2) - m(\xi_2)) \hat v(\xi_1) \hat w(\xi_2).$$
From the support conditions on $v$ and $w$ we may insert some cutoff
functions
$$ (I(vw) - vIw) \hat{}(\xi) = \int_{\xi_1 + \xi_2 = \xi} (m(\xi_1+\xi_2) - m(\xi_2)) a(\xi_1) b(\xi_2) \hat v(\xi_1) \hat w(\xi_2)$$
where $a$ is a bump function adapted to $|\xi| \lesssim 1$ and
$b(\xi_2)$ is a bump function adapted to $|\xi| \sim M$.

From the mean value theorem and the smoothness of $m$ we see that
$$ m(\xi_1+\xi_2) - m(\xi_2) = O(M^{-1} m(M))$$
on the support of $a(\xi_1) b(\xi_2)$.  Moreover, we may write
$$ (m(\xi_1+\xi_2) - m(\xi_2)) a(\xi_1) b(\xi_2) = M^{-1} m(M) c(\xi_1, \xi_2)$$
where $c$ is a bump function of two variables adapted to the region
$|\xi_1| \lesssim 1$, $|\xi_2| \sim M$.

By inverting the Fourier transform again, we obtain
$$ (I(vw) - vIw)(x) = M^{-1} m(M) \int_{\R^3} \int_{\R^3}\check c(y,z) v(x+y) w(x+z)\ dy dz$$
where $\check c$ is the inverse Fourier transform of $c$.  From the
bump function estimates on $c$ and standard integration by parts
computations, we obtain the bounds
$$ |\check c(y, z)| \lesssim M^3 \langle y \rangle^{-100} \langle Mz \rangle^{-100}.$$
Thus by Minkowski's inequality followed by H\"older's inequality,
\begin{equation}
\left\{
\begin{array}{ll}
\| I(vw) - vIw \|_{L^2_x} & \lesssim M^{-1} m(M) \int_{\R^3} \int_{\R^3} M^3 \langle y \rangle^{-100} \langle Mz \rangle^{-100} \| v(\cdot+y)\|_\infty \| w(\cdot+z)\|_{L^2_x}\ dy dz\\
& \lesssim M^{-1} m(M)  \| v\|_\infty \|w\|_{L^2_x}
\end{array}
\right\}
\end{equation}
and the claim \eqref{m-comm} then follows from the Cauchy-Schwarz
inequality.
\end{proof}

Because of this Lemma, the low frequency terms will be quite minor
in comparison to the medium and high frequency interactions,
although they will unfortunately occupy about half of the cases in
the sequel. For the medium and high frequency interactions we shall
often use the estimate \\\\

\begin{lemma}[Trilinear estimate]\label{tri-lemma}

\begin{enumerate}
\renewcommand{\labelenumi}{(\roman{enumi})}
Let $q_{1}$, $q_{2}$, $q_{3}$, be such that $1/q_{1} + 1/q_{2} +
1/q_{3} \leq 1$. Then

\item We have
$$ \eqref{com-1}\lesssim \| u \|_{L^{q_1}_t H^{s_1}_x} \| v \|_{L^{q_2}_t H^{s_2}_x} \| w \|_{L^{q_3}_t H^{s_3}_x}$$
whenever $0 \leq s_1+s_2,s_2+s_3,s_3+s_1$ and $s_1+s_2+s_3
> 3/2$.

\item Under the same assumptions, we also have the variant
$$
\eqref{com-1} \lesssim \| u \|_{L^{q_1}_t H^{s_1}_x} \| I^{-1} v
\|_{L^{q_2}_t H^{s_2}_x} \| Iw \|_{L^{q_3}_t H^{s_3}_x}.$$

\item Let $s_{1}$,$s_{2}$, $s_{3}$, $s'_{1}$, $s'_{2}$,
$s'_{3}$ be such that $s_{1}+ s_{2} \geq 0$, $-s_{3} \leq
\min{(s_{1},s_{2})}$, $-s_{3} < s_{1} + s_{2} -3/2$, $s'_{1} + s'_{2}
\geq 0$, $-s'_{3} \leq \min {(s'_{1},s'_{2})}$ and $-s'_{3} < s'_{1} +
s'_{2} - 3/2$. Then

\begin{equation}
\begin{array}{l}
\eqref{com-1} \lesssim \| I u \|_{H^{s_1}_{x}} \| v \|_{H^{s_2}_{x}}
\| w \|_{H^{s_3}_{x}} + \| u \|_{H^{s'_1}_{x}} \| I v
\|_{H^{s'_2}_{x}} \| I w \|_{H^{s'_3}_{x}}
\end{array}
\end{equation}

\end{enumerate}
\end{lemma}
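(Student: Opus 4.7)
The plan is to treat parts (i), (iii), and (ii) in that order, since each builds on the same triangle-inequality-plus-Sobolev-multiplication machinery, with (ii) needing a little Fourier analysis on top.

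For (i), I would split via the triangle inequality into
\[
\int_{t_0}^T \bigl|\langle u(t), I(v(t)w(t))\rangle\bigr|\,dt \; + \; \int_{t_0}^T \bigl|\langle u(t), (Iv(t))(Iw(t))\rangle\bigr|\,dt,
\]
use the self-adjointness of $I$ (since $m$ is real and radial) to move the $I$ in the first integrand onto $u$, and use $|m(\xi)|\leq 1$ to discard every remaining $I$ via $\|If\|_{H^{\sigma}_x}\leq \|f\|_{H^{\sigma}_x}$. Sobolev multiplication \eqref{sobolev-mult} then gives $\|vw\|_{H^{-s_1}_x}\lesssim \|v\|_{H^{s_2}_x}\|w\|_{H^{s_3}_x}$, the hypotheses $0\leq s_1+s_2, s_2+s_3, s_3+s_1$ and $s_1+s_2+s_3>3/2$ being precisely the three conditions required in \eqref{sobolev-mult}. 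A H\"older in time produces the $L^{q_i}_t$-localization and closes (i). Part (iii) uses the very same splitting, applying \eqref{sobolev-mult} with the unprimed triple $(s_1,s_2,s_3)$ to the $I(vw)$ half (after moving $I$ onto $u$) and with the primed triple $(s'_1,s'_2,s'_3)$ to the $(Iv)(Iw)$ half; the two sets of hypotheses listed in (iii) are tailored precisely for these two applications of \eqref{sobolev-mult}.

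Part (ii) is the genuinely new piece: its right-hand side $\|u\|\,\|I^{-1}v\|\,\|Iw\|$ is strictly sharper than that of (i) in the regime where $v$ has bounded frequency and $w$ has high frequency, so it cannot follow from operator-norm bounds on $I$ alone. The $(Iv)(Iw)$ half is still handled as in (i), using $|m|\leq 1\leq |m^{-1}|$ to replace $\|Iv\|_{H^{s_2}}$ by $\|I^{-1}v\|_{H^{s_2}}$ for free. For the $I(vw)$ half I would pass to the Fourier side: setting $V:=I^{-1}v$ and $W:=Iw$, a direct Plancherel computation identifies $I(vw)-(Iv)(Iw)$ as the bilinear Fourier multiplier operator acting on $(V,W)$ with symbol
\[
\mu(\xi,\eta) := \frac{m(\xi)\,m(\eta)}{m(\xi-\eta)} - m(\eta)^2.
\]
The pointwise bound $|\mu(\xi,\eta)|\lesssim 1$ follows from the elementary dichotomy that if $|\xi-\eta|\gg N$ then at least one of $|\xi|,|\eta|$ must be $\gtrsim |\xi-\eta|/2$, so the corresponding factor in the numerator is $\lesssim m(\xi-\eta)$. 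Combined with the Mikhlin-type derivative bounds on $m$, a standard paraproduct (or Coifman-Meyer) argument then shows that this bilinear multiplier obeys the same Sobolev product law as ordinary pointwise multiplication under the hypotheses of (i), giving $\|I(vw)-(Iv)(Iw)\|_{H^{-s_1}_x}\lesssim \|V\|_{H^{s_2}_x}\|W\|_{H^{s_3}_x}$; H\"older in time then completes (ii). The main obstacle will be precisely this last step, upgrading $|\mu|\lesssim 1$ to the Sobolev-level boundedness of the bilinear operator, which requires verifying the joint derivative estimates on $\mu$ uniformly in $N$.
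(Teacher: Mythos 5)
Your treatment of (i) and (iii) coincides with the paper's: triangle inequality, self-adjointness of $I$, the bound $\|If\|_{H^\sigma_x}\lesssim\|f\|_{H^\sigma_x}$, \eqref{sobolev-mult} via duality, and H\"older in time. The genuine issue is in (ii). Your reduction of the commutator to a bilinear Fourier multiplier acting on $(V,W)=(I^{-1}v,Iw)$ with symbol $\mu(\xi,\eta)=\frac{m(\xi)m(\eta)}{m(\xi-\eta)}-m(\eta)^2$ is correct, and so is the pointwise bound $|\mu|\lesssim 1$. But the step you yourself flag as the main obstacle --- upgrading $|\mu|\lesssim1$ to Sobolev-product boundedness via ``Mikhlin-type derivative bounds and Coifman--Meyer'' --- is a real gap as written: the classical Coifman--Meyer derivative hypotheses are \emph{not} satisfied uniformly in $N$. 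Concretely, when the two input frequencies $\eta$ and $\xi-\eta$ both have size $M\gg N$ while the output $\xi$ has size comparable to $N$, a derivative of the factor $m(\xi)$ with respect to an input variable has size comparable to $1/N$, not $O(1/M)$: the symbol oscillates at the fixed scale $N$ rather than at the scale of the input frequencies, so the standard theorem does not apply with constants uniform in $N$, and a naive dyadic expansion of the symbol on blocks of size $M$ produces polynomial losses in $M/N$ that the mere epsilon of room in $s_1+s_2+s_3>3/2$ cannot absorb.

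The good news is that no derivative estimates on $\mu$ are needed at all, because every norm in (ii) is $L^2$-based and depends only on the modulus of the Fourier transform: after H\"older in time, majorize $\hat u,\hat V,\hat W$ by their absolute values, use $|\mu|\lesssim1$ pointwise, and you are reduced to $|\langle u^*,V^*W^*\rangle|$, which \eqref{sobolev-mult} plus duality controls under exactly the hypotheses of (i). This comparison-principle device is precisely what the paper uses, though it organizes (ii) differently: it splits \eqref{com-1} by the triangle inequality, handles $\langle u,(Iv)(Iw)\rangle$ directly from \eqref{sobolev-mult} (using that $\|Iv\|_{H^{s_2}_x}\lesssim\|I^{-1}v\|_{H^{s_2}_x}$), and rewrites $\langle u,I(vw)\rangle=\langle I^{-1}((Iu)\overline v),Iw\rangle$, distributing $I^{-1}$ by the pointwise inequality $m(\xi+\eta)^{-1}\lesssim m(\xi)^{-1}+m(\eta)^{-1}$ (a fractional Leibnitz rule, valid after reducing to non-negative Fourier transforms) before applying \eqref{sobolev-mult} twice. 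Either repair closes your argument; as submitted, the Coifman--Meyer step would fail.
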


\begin{proof}
The first inequality is an immediate consequence of H\"older in
time, \eqref{sobolev-mult} and duality.

To prove the second estimate, we use a H\"older in time and the
triangle inequality to reduce to proving the spatial estimates
$$ |\langle u, (Iv) (Iw) \rangle|,
|\langle u, I(vw) \rangle| \lesssim \| u \|_{H^{s_1}_x} \| I^{-1} v
\|_{H^{s_2}_x} \| Iw \|_{H^{s_3}_x}.$$ The former estimate again
follows from \eqref{sobolev-mult} and duality, noting that $I^{-1}
v$ controls $Iv$ in the $H^{s_2}_x$ norm.  To  handle the latter
estimate, we rewrite
$$ \langle u, I(vw) \rangle = \langle Iu, vw \rangle = \langle (Iu) \overline{v}, w \rangle = \langle I^{-1}((I u) \overline v), Iw \rangle,$$
and reduce to showing the bilinear estimate
$$ \| I^{-1}((Iu) \overline v) \|_{H^{-s_3}_x} \lesssim \| u \|_{H^{s_1}_x} \| I^{-1} v \|_{H^{s_2}_x}.$$
We may assume that $u,v$ have non-negative Fourier transform.  From
the pointwise inequality $m(\xi + \eta)^{-1} \lesssim m(\xi)^{-1} +
m(\eta)^{-1}$ and Plancherel, we see that $I^{-1}$ obeys a
fractional Leibnitz rule, so it suffices to show that
$$ \| u \overline v \|_{H^{-s_3}_x} \lesssim \| u \|_{H^{s_1}_x} \| I^{-1} v \|_{H^{s_2}_x}.$$
and
$$ \|(Iu) (I^{-1} \overline v) \|_{H^{-s_3}_x} \lesssim \| u \|_{H^{s_1}_x} \| I^{-1} v \|_{H^{s_2}_x}.$$
But these both follow from \eqref{sobolev-mult} (observing that $u$
controls $Iu$ and $I^{-1} v$ controls $v$; the conjugation is
irrelevant).

To prove the third estimate it is enough to prove

\begin{equation}
\begin{array}{l}
\left| \langle u, I(vw) \rangle \right| \lesssim \| I u
\|_{H^{s_{1}}} \| v \|_{H^{s_{2}}} \| w \|_{H^{s_{3}}}
\end{array}
\nonumber
\end{equation}
and

\begin{equation}
\begin{array}{l}
\left| \langle u, Iv Iw \rangle \right| \lesssim \| u
\|_{H^{s'_{1}}} \| Iv \|_{H^{s'_{2}}} \| I w \|_{H^{s'_{3}}} 
\end{array}
\end{equation}
But, by integration by parts, duality and (\ref{sobolev-mult}) we
have

\begin{equation}
\begin{array}{ll}
\left| \langle u, I(vw) \rangle \right| & = \left| \langle Iu, vw \rangle \right|  \\
& \lesssim \| I u \|_{H^{s_{1}}} \| v w \|_{H^{-s_{1}}} \\
& \lesssim \| I u \|_{H^{s_{1}}} \| v \|_{H^{s_{2}}} \| w
\|_{H^{s_{3}}}
\end{array}
\nonumber
\end{equation}
and

\begin{equation}
\begin{array}{ll}
\left| \langle u, (Iv)(Iw)  \rangle  \right| & \lesssim \| u
\|_{H^{s'_{1}}} \| (I v) (I w) \|_{H^{-s'_{1}}} \\
& \lesssim \| u \|_{H^{s'_{1}}} \| I v \|_{H^{s'_{2}}} \| I w
\|_{H^{s'_{3}}}
\end{array}
\nonumber
\end{equation}
and the claim follows.
\end{proof}

As with Lemma \ref{low-freq}, these estimates when combined with
\eqref{gain} will give the desired decay in $N$ provided that there
is enough surplus regularity in the high frequency factors.

The ``high-high'' interaction, when $b$, $c$ are both high, will
also be relatively easy to handle because there are two high
frequency terms in which one can sacrifice some regularity.  (It
will turn out that the $a$ term usually has no surplus regularity.)
The ``medium-high'' or ``high-medium'' interactions will be more
delicate however, especially if the function associated with the
``high'' frequency is quite rough (e.g. $\nabla_{x,t} \phi$). In
this case there may be no surplus regularity on the high frequency
factor to use, but to compensate for this the medium frequency
factor will have quite a bit of surplus regularity. To exploit this
we will use the commutator structure, and specifically the H\"older
continuity (or mean-value theorem) estimate
\begin{equation}\label{theta}
m(\xi_1 + \xi_2) - m(\xi_2) = O( (\frac{\xi_1}{\xi_2})^\theta
m(\xi_2) )
\end{equation}
for any medium frequency $\xi_1$, high frequency $\xi_2$, and $0
\leq \theta \leq 1$.  Morally speaking, the estimate \eqref{theta}
allows us to transfer\footnote{It is this ability to use the
commutator structure to transfer regularity from smooth factors to
rough ones which distinguishes the methods here from the frequency
truncation method used by Bourgain \cite{borg:book} and later
authors.  In that method one usually has to rely on ``extra
smoothing estimates'' to control the medium-high interactions, but
these estimates are usually only available if there are no
derivatives in the nonlinearity.} up to one full degree of
regularity from the medium frequency factor to the high frequency
factor. (If it were not for the H\"older estimate \eqref{theta}
(which would for instance be the case if $m$ was rough), one would
have to require that $v$ and $w$ can both \emph{individually} come
up with this much surplus regularity; this is roughly equivalent to
the existence of an extra smoothing estimate of the type mentioned
in the introduction.)

We now turn to the specific details for each commutator in turn.

\section{The non-null-form commutators $[I, \N_1]$, $[I, \N_2]$}\label{n1-sec}

Now we prove \eqref{rk-1} for $[I, \N_1]$ and \eqref{rk-2} for $[I,
\N_2]$. Because of the presence of the relatively smooth function
$A_0$, these commutators can be handled by relatively simple tools,
namely H\"older's inequality, the fractional Leibnitz rule, and some
simple commutator estimates.  We will be able to obtain a decay here
of $N^{-1/2+}$, which improves over the claimed decay of
$\frac{1}{N^{ \left(s -\frac{1}{2} \right)-}}$.

We split $\D_0 \Phi$, $A_0$, and $\underline \Phi$ into low, medium,
and high components as in Section \ref{freq-sec}.  It will suffice
to show
\begin{align}
\left|\int_{t_0}^T \bigl \langle (\D_0 \Phi)_a, I(\partial_t A_{0,b} \underline \Phi_c) - (I \partial_t A_{0,b}) (I \underline \Phi_c) \bigr \rangle\ dt\right| &\lesssim N^{-1/2+} \label{a0-1}\\
\left|\int_{t_0}^T \bigl \langle (\D_0 \Phi)_a, I(A_{0,b} \partial_t \underline \Phi_c) - (I A_{0,b}) (I \partial_t \underline \Phi_c) \bigr \rangle\ dt\right| &\lesssim N^{-1/2+}\label{a0-2}\\
\left|\int_{t_0}^T \bigl \langle \nabla I A_{0,a}, I(\underline
\Phi_b \nabla_{x,t} \underline \Phi_c) - (I\underline \Phi_b) (I
\nabla_{x,t} \underline \Phi_c) \bigr \rangle\ dt\right| \lesssim
N^{-1/2+}\label{a0-3}
\end{align}
for all triples $(a,b,c)$ in Figure \ref{freq-fig}.

To prove the above commutator estimates, we will use the following
bounds on the factors $\D_0 \Phi$, $A_0$, $\nabla_{x,t} A_0$,
$\underline \Phi$, $\nabla_{x,t} \Phi$.

\begin{lemma}[Spacetime estimates]\label{ao1-est}
On the slab $[t_0,T] \times \R^3$, we can place the low, medium, and
high components of $\D_0 \Phi$, $A_0$, $\nabla_{x,t} A_0$,
$\underline \Phi$, and $\nabla_{x,t} \underline \Phi$ in the
following spaces:

\begin{tabular}{|l|l|l|l|l|l|}
\hline
a & $\D_0 \Phi_a \in$ & $A_{0,a} \in$ & $\nabla_{x,t} I A_{0,a} \in$ & $\underline \Phi_a \in$ & $\nabla_{x,t} \underline \Phi_a \in$ \\
& $C \Ball(C^0_t \ldots)$ & $C \Ball(L^2_t \ldots)$ & $C \Ball(L^2_t \ldots)$ & $C \Ball(C^0_t \ldots)$ & $C \Ball(C^0_t \ldots)$ \\
\hline
low & $\L^\infty_{10}$ & $\L^\infty_{10}$ & $\L^\infty_{10}$ & $\L^\infty_{10}$ & $\L^\infty_{10}$\\
\hline
med & $IL^2_x$ & $IH^2_x$ & $IH^1_x$ & $IH^1_x$ & $IL^2_x$\\
\hline
high & $L^2_x$ & $I^{-2} H^2_x$ & $I^{-1} H^1_x$ & $I^{-1} H^1_x$ & $I^{-1} L^2_x$ \\
high & $N^{-1/4+} H^{-1/4+}_x$ & $N^{-1/4+} I^{-1} H^{7/4+}_x$ & $N^{-1/4+}
H^{3/4+}_x$ &  $N^{-1/4+} H^{3/4+}_x$  &  $ N^{-1/4+}  H^{-1/4+}_x $ \\
high & $N^{-1/2+} H^{-1/2+}_x$ & $N^{-1/2+} H^{3/2+}_x$ & $N^{-1/2+} IH^{1/2+}_x$ & $N^{-1/2+} IH^{1/2+}_x$ & $N^{-1/2+} IH^{-1/2+}_x$ \\
\hline
\end{tabular}

Thus for instance we have $\partial_t I A_{0,\high} \in C N^{-1/2+}
\Ball(L^2_t IH^{1/2+}_x)$.
\end{lemma}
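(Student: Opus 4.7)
My plan is to verify the twenty-five table entries using only the input hypotheses \eqref{phi-bound}--\eqref{A0t-bound} and Lemma \ref{sigma-bound-lemma}, together with three standard tools: the embedding \eqref{hsd-energy} of $H^{\sigma,b}$ into $C^0_t H^\sigma_x$ (valid here since $s > 3/4 > 1/2$), the Bernstein-type inclusion \eqref{lpr-bernstein}, and the elementary high-frequency estimate \eqref{gain}. The verification will be organized by frequency regime.

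For the low-frequency row, each of the five hypotheses supplies a $C^0_t \L^p_R$ or $L^2_t \L^p_R$ summand with $R\in\{1,2\}$ and $p\in\{3,6\}$; applying \eqref{lpr-bernstein} to raise $p$ to $\infty$ and $R$ to $10$ will yield the claimed $\L^\infty_{10}$ bounds. For the two derivative columns I will additionally invoke \eqref{nabla} to commute $\nabla_{x,t}$ into the $\L^p_R$ factor, which is harmless since $R$ is bounded.

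For the medium-frequency row, two features of the setup collapse the argument. First, $m(\xi)\equiv 1$ on $|\xi|\leq N$, so since the medium band is $10<|\xi|<N/5$ the operator $I$ and all its powers act as the identity there, collapsing each of $I^{\pm k}H^\sigma_x$ to $H^\sigma_x$. Second, the $\L^p_R$ summands in our hypotheses all have frequency cutoff $R\leq 2$ and therefore have no medium-frequency content. Thus on this band each hypothesis reduces to its $H^{\sigma,b}$ piece, and \eqref{hsd-energy} immediately produces the listed entries (with $H^{1,0}$ being literally $L^2_t H^1_x$ for the $\nabla_{x,t}A_0$ column, requiring no energy embedding at all).

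The three high-frequency rows are the substantive part, and the main obstacle is the arithmetic of trading regularity against powers of $N$. Once again the $\L^p_R$ tails will contribute nothing since $|\xi|>N/10\gg R$. The first sub-row is a direct restatement of the corresponding hypothesis via \eqref{hsd-energy}. For the second and third sub-rows I will exploit the crucial observation that on $|\xi|>N/10$ the symbol $m(\xi)$ is comparable to $\min(1,(|\xi|/N)^{s-1})$, so that $I^{-1}$ acts as at most $N^{s-1}\langle\nabla_x\rangle^{1-s}$ on this range; this will convert the $I^{-1}H^1_x$ (respectively $I^{-2}H^2_x$) bound of $O(1)$ into an $H^s_x$ (respectively $H^{2s}_x$) bound of $O(N^{s-1})$ (respectively $O(N^{2(s-1)})$). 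Once in a pure Sobolev space, \eqref{gain} lets me exchange any further regularity $\theta$ for a factor of $N^{-\theta}$. A routine arithmetic check (using that $s>3/4$, hence \emph{a fortiori} since $s>\sqrt{3}/2$) then confirms that the target indices in row 2 ($-\tfrac14{+},\tfrac74{+},\tfrac34{+},\tfrac34{+},-\tfrac14{+}$) and row 3 ($-\tfrac12{+},\tfrac32{+},\tfrac12{+},\tfrac12{+},-\tfrac12{+}$) are exactly the ones producing the advertised $N^{-1/4+}$ and $N^{-1/2+}$ gains; each case reduces to a single instance of the scaling identity $\|I^{-k}u\|_{H^{\sigma'}_x}\sim N^{k(s-1)}\|u\|_{H^{\sigma'+k(1-s)}_x}$ on high frequencies followed by one application of \eqref{gain}.
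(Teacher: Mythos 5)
Your route is essentially the paper's: the published proof is a four-line sketch citing exactly the ingredients you list -- Lemma \ref{sigma-bound-lemma}, \eqref{phi-bound}--\eqref{A0t-bound}, then \eqref{lpr-bernstein} and \eqref{hsd-energy} for the low pieces, \eqref{hsd-energy} and \eqref{gain} for the medium and high pieces, plus the remark that $I$ is the identity on medium frequencies. Your medium- and high-frequency analysis is correct and in fact more explicit than the paper, since you spell out the symbol comparison $m(\xi)^{-k} \sim (\langle \xi\rangle/N)^{k(1-s)}$ on $|\xi| \gtrsim N$ needed to move the powers of $I$ around before applying \eqref{gain}; the arithmetic you describe does reproduce the $N^{-1/4+}$ and $N^{-1/2+}$ rows, and indeed only $s>3/4$ is needed for it (the paper's parenthetical ``$5/6<s<1$ suffices'' refers to the hypotheses feeding into \eqref{phi-bound}--\eqref{A0t-bound}, not to this arithmetic).

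The one step that is not justified as written is the low-frequency row. The bounds \eqref{phi-bound}--\eqref{A0t-bound} and Lemma \ref{sigma-bound-lemma} place each field in a \emph{sum} of spaces, e.g.\ $\underline\Phi \in C\Ball(I^{-1}H^{1,s-} + C^0_t \L^6_2)$; this is not a frequency decomposition, so the low-frequency projection $\underline\Phi_{\low}$ is \emph{not} contained in the $\L^p_R$ summand alone -- the $I^{-1}H^{\sigma,b}$ (or $H^{\sigma,0}$) summand also has low-frequency content, and your argument only treats the $\L^p_R$ part. The missing half is easy with the tools you already cite: on $|\xi|<20$ the operator $I$ is the identity, so the low projection of that summand lies in $H^{\sigma,s-} \subseteq C^0_t H^\sigma_x$ by \eqref{hsd-energy} (or literally in $L^2_t H^\sigma_x$ for the $b=0$ columns), and Bernstein then upgrades it to $C^0_t \L^\infty$ or $L^2_t \L^\infty$ on the relevant ball -- which is exactly why the paper invokes \eqref{hsd-energy} alongside \eqref{lpr-bernstein} for the low terms. (A cosmetic point in the same row: the low summand of $\D_0\Phi$ from Lemma \ref{sigma-bound-lemma} is $C^0_t\L^6_{10}$, so your ``$R\in\{1,2\}$'' is not quite accurate, though harmless since the medium band starts at $|\xi|>10$.) With that patch the proof is complete.
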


We may discard the $I$ from the above spaces if desired thanks to
the trivial embeddings $I H^\alpha \subseteq H^\alpha \subseteq
I^{-1} H^\alpha$ for any $\alpha \in \R$.

\begin{proof}
These follow from Lemma \ref{sigma-bound-lemma}, \eqref{phi-bound},
\eqref{phit-bound}, \eqref{A0-bound}, and \eqref{A0t-bound}.  For
the low frequency terms we use \eqref{lpr-bernstein} and
\eqref{hsd-energy}.  For the medium and high frequency terms we use
\eqref{hsd-energy} and \eqref{gain}, as well as the hypothesis
$\frac{\sqrt{3}}{2} < s < 1$ (actually, $ 5/6  <s < 1$ suffices). Note
that the operator $I$ is the identity on medium frequencies, so its
presence there is harmless.
\end{proof}

We can now motivate the numerology behind the decay of $N^{-1/2+}$.
Consider the commutators \eqref{a0-1}, \eqref{a0-3}, which are
roughly of the form $\int_{[t_0,T] \times \R^3} \O( \nabla_{t,x} A_0
\underline \Phi \nabla_{t,x} \underline \Phi )$.  From the above we
see that the three factors are in $H^1_x$, $L^2_x$, and $H^1_x$, for
medium frequencies at least.  Lemma \ref{tri-lemma} then allows us
to estimate the above trilinear expression.  In fact we have about
half a derivative to spare; even if we reduced the regularity of one
of the $H^1$ factors to $H^{1/2+}$, we could still use Lemma
\ref{tri-lemma}.  The idea is to then use \eqref{gain} to convert
this half derivative of room to a $N^{-1/2+}$ factor in the
estimates\footnote{Indeed, one could perhaps improve this factor
even further by exploiting the room available in the time index.
Currently we are estimating one factor in $L^2_t$ and the other two
in $C^0_t$.  By using Strichartz estimates (cf. Section
\ref{cubic-sec}, or \cite{kpv:gwp}) one might be able to sacrifice
integrability in time for regularity in space, which might then be
convertible to further gains in $N$.}.  The case of \eqref{a0-2} is
similar; the three factors are now in $L^2_x$, $H^2_x$, $L^2_x$ but
there is still the half of derivative of surplus regularity which
one can hope to convert to a $N^{-1/2+}$ gain, by using \eqref{gain}
(and in some cases \eqref{theta}).

Unfortunately, there are a number of minor differences between
\eqref{a0-1}, \eqref{a0-2}, and \eqref{a0-3} which require separate
treatment.  To systematize the numerous cases we shall use a number
of tables.

To prove \eqref{a0-1} (which is the easiest case) for each of the
six cases in Figure \ref{freq-fig} we use the norms and Lemmas
indicated in Figure \ref{a01-fig}.  For low frequency interactions
we use Lemma \ref{low-freq}, while for medium and high frequency
interactions we use Lemma \ref{tri-lemma}(i).

\begin{figure}
\begin{tabular}{|l|l|l|l|l|l|l|}
\hline
$a$ & $b$ & $c$ & $\D_0 \Phi_a \in $ & $\partial_t A_{0,b} \in $ & $\underline \Phi_c \in $ & Lemma\\
 & &  & $C\Ball(C^0_t \ldots)$ & $C\Ball(L^2_t \ldots)$ & $ C \Ball(C^0_t \ldots)$ & \\
\hline
low & high & high & $\L^\infty_{10}$ & $N^{-1/2+} H^{1/2+}_x$ & $N^{-1/2+} H^{1/2+}_x$ & \ref{low-freq}\\
high & low & high & $L^2_x$ & $\L^\infty_{10}$ & $N^{-1/2+} H^{1/2+}_x$ & \ref{low-freq} \\
high & high & low & $L^2_x$ & $N^{-1/2+} H^{1/2+}_x$ & $\L^\infty_{10}$ & \ref{low-freq} \\
\hline
med/high & med & high & $L^2_x$ & $H^1_x$ & $N^{-1/2+} H^{1/2+}_x$ & \ref{tri-lemma}(i) \\
med/high & high & high & $L^2_x$ & $N^{-1/4+} H^{3/4+}_x$ & $N^{-1/4+} H^{3/4+}_x$ & \ref{tri-lemma}(i)\\
med/high & high & med & $L^2_x$ & $N^{-1/2+} H^{1/2+}_x$ & $H^1_x$ & \ref{tri-lemma}(i)\\
\hline
\end{tabular}
\caption{List of possible cases for \eqref{a0-1}, the spaces in
which to estimate the three factors, and the Lemma used to obtain
the estimate.  In this case smoothing effect of $I$ or the
commutator structure does not need to be exploited.  Observe that in
all six cases the product of the three norms is $O(N^{-1/2+})$.}
\label{a01-fig}
\end{figure}

The proof of \eqref{a0-2} is a little trickier because of the low
regularity of $\partial_t \phi$.  We tackle the six cases in Figure
\ref{freq-fig} using the spaces and Lemmas in Figure \ref{a02-fig}
(concatenating the fifth and sixth cases).  Five of the cases are
straightforward applications of the Lemmas of the previous section
and will not be discussed further.  The one case which is
interesting is Case 4, when $a$ is medium or high, $b$ is medium,
and $c$ is high. By a H\"older in time it suffices to show the
commutator estimate
$$ |\langle u, I(vw) - v Iw \rangle| \lesssim N^{-1/2+} \| u\|_{L^2_x} \| v \|_{H^2_x} \| I w \|_{L^2_x}$$
where $v$ has medium frequency and $w$ has high frequency.  (Note that Lemma \ref{com} is not available to us here because $v$ is not low frequency.)  Since
all the norms on the right-hand side are $L^2_x$ based we may assume
that $\hat u$, $\hat v$, $\hat w$ are non-negative.  By
\eqref{theta} with $\theta = 1/2-$ we then have
$$ |\langle u, I(vw) - v Iw \rangle| \lesssim |\langle u, (|\nabla_x|^{1/2-} v)  (|\nabla_x|^{-1/2+} Iw) \rangle|.$$
But from \eqref{gain} we have $\| |\nabla_x|^{-1/2+} Iw \|_{L^2_x}
\lesssim N^{-1/2+} \| Iw \|_{L^2_x}$, and from Sobolev embedding we
have $\| |\nabla_x|^{1/2-} v \|_{L^\infty_x} \lesssim \| v
\|_{H^2}$.  The claim follows.

\begin{figure}
\begin{tabular}{|l|l|l|l|l|l|l|}
\hline
$a$ & $b$ & $c$ & $\D_0 \Phi_a \in$ & $A_{0,b} \in$ & $\partial_t \underline \Phi_c \in$ & Lemma\\
 & & & $C \Ball(C^0_t \ldots)$ & $C \Ball(L^2_t \ldots)$ & $C \Ball(C^0_t \ldots)$ & \\

\hline
low & high & high & $\L^\infty_{10}$ & $N^{-1/2+} H^{3/2+}_x$ & $N^{-1/2+} H^{-1/2+}$ & \ref{low-freq}\\
high & low & high & $L^2_x$ & $\L^\infty_{10}$ & $I^{-1} L^2_x$ & \ref{com}\\
high & high & low & $L^2_x$ & $N^{-1/2+} H^{3/2+}_x$ & $\L^\infty_{10}$  & \ref{low-freq} \\
\hline
med/high & med & high & $L^2_x$ & $H^2_x$ & $I^{-1} L^2_x$ & \eqref{theta} \\
med/high & high & med/high & $L^2_x$ & $N^{-1/2+} IH^{3/2+}_x$ &
$I^{-1} L^2_x$ & \ref{tri-lemma}(ii)\\
\hline
\end{tabular}
\caption{List of possible cases for \eqref{a0-2}, the norms in which
to estimate the three factors, and a very brief description of the
techniques used in the estimate.  When Lemma \ref{low-freq} or Lemma
\ref{tri-lemma} is used, the product of the three norms is
$O(N^{-1/2+})$; in the other two cases the decay in $N$ comes
instead from commutator estimates.} \label{a02-fig}
\end{figure}

Finally, we prove \eqref{a0-3}.  We now argue as before, except that
we now must re-shuffle the six cases of Figure \ref{freq-fig}
because $\nabla_x IA_0$ has different behaviour at medium and high
frequencies. Similar to the previous section, the six cases in
Figure \ref{freq-fig} can now be handled using the spaces and Lemmas
in Figure \ref{n2-fig}.

\begin{figure}

\begin{tabular}{|l|l|l|l|l|l|l|}
\hline
$a$ & $b$ & $c$ & $\nabla_x IA_{0,a} \in$ & $\underline \Phi_b \in$ & $\nabla_{x,t} \underline \Phi_c \in$ & Lemma\\
 & & & $C \Ball(L^2_t \ldots)$ & $C \Ball(C^0_t \ldots)$ & $C \Ball(C^0_t \ldots)$ & \\

\hline
low & high & high & $\L^\infty_{10}$ & $N^{-1/2+} H^{1/2+}_x$ & $N^{-1/2+} H^{-1/2+}$ & \ref{low-freq}\\
high & low & high & $N^{-1/2+} H^{1/2+}_x$ & $\L^\infty_{10}$ & $N^{-1/2+} H^{-1/2+}_x$ & \ref{low-freq}\\
high & high & low & $N^{-1/2+} H^{1/2+}_x$ & $N^{-1/2+} H^{1/2+}_x$ & $\L^\infty_{10}$  & \ref{low-freq} \\
\hline
med & med & high & $H^1_x$ & $  H^1_x$ & $N^{-1/2+} H^{-1/2+}_x$ & \ref{tri-lemma}(i)\\
high & med & high & $N^{-1/2+} H^{1/2+}_x$ & $ I H^1_x$ & $I^{-1} L^2_x$ & \ref{tri-lemma}(ii)\\
med & high & med/high & $H^1_x$ & $N^{-1/2+} IH^{1/2+}_x$ &
$I^{-1} L^2_x$ & \ref{tri-lemma}(ii)\\
high & high & med/high & $ I^{-1} H^{1} $ and & $ N^{-1/4+}
H^{3/4+}_x$ and
& $ N^{-1/4+} H^{-1/4+}_x $ and & \ref{tri-lemma}(iii) \\
high & high & med/high & $N^{-1/4+} H^{3/4+}_x $ & $ N^{-1/4+}
H^{3/4+}_x $ & $I^{-1} L^{2}$ & \ref{tri-lemma}(iii)\\
\hline
\end{tabular}

\caption{List of possible cases for \eqref{a0-3}, the norms in which
to estimate the three factors, and a very brief description of the
techniques used in the estimate. In all cases the product of the
three norms is $O(N^{-1/2+})$.} \label{n2-fig}
\end{figure}

\section{The null form commutator $[I, \N_0]$}\label{n0-sec}

We now prove the most difficult commutator estimate, namely the
estimate \eqref{rk-1} for the null form commutator $[I, \N_0]$. Here
we shall need the full strength of the $H^{s,b}$ spaces, and in
particular the fact that the ``$b$'' index is $s-$ and not just
$1/2+$.  Here is the one case where we will only be able to obtain a
decay of $\frac{1}{N^{\left( s-\frac{1}{2} \right)-}}$ instead of
$N^{-1/2+}$. Unsurprisingly we shall also need null form estimates
for these spaces.  The time localization to the interval $[t_0, T]$
has been ignored up until now (because we have always done a
H\"older in time anyway) but is now a major technical nuisance, as
multiplication by sharp time cutoffs destroys the ``$b$'' index of
regularity.

The major difficulty with this estimate is with the $\nabla^{-1}
Q(\underline \Phi, \underline \Phi)$ component of $\N_0$, because
there is no extra regularity in the $s$ index in any of the factors
to be sacrificed to obtain the decay in $N$.  However, there is some
extra regularity in the $b$ index which can (after much work) be
exploited as a substitute.  Informally, the strategy is as follows.
If at least one of the factors is low or medium frequency then one
can obtain the decay in $N$ through commutator estimates.  Now
suppose all factors are high frequency.  We look at the spacetime
Fourier transform of all three factors.  If at least one of them is
far away ($\gtrsim N$) from the light cone then one can exploit the
additional room in the $b$ index to obtain the gain.  The only
remaining possibility is when all three frequencies are close to the
light cone, but this means that their frequencies must be close to
parallel (since they must add up to zero), at which point one can
obtain some gain from the null structure.

It will suffice to prove that
$$ \left| \int_\R 1_{[t_0,T]}(t) \bigl\langle (\D_0 \Phi)_a, I\N_0(\underline \Phi_b, \underline \Phi_c) - \N_0(I\underline \Phi_b, I\underline \Phi_c)\bigr\rangle \ dt\right| \lesssim
\frac{1}{N^{ ( s - 1/2 ) -} }$$ for all $a,b,c$ as in Figure
\ref{freq-fig}.

We first deal with the low frequency terms when one of $a$, $b$, $c$
is low frequency.  For this term we shall abandon the null structure
and just prove
$$ \left| \int_\R 1_{[t_0,T]}(t) \bigl\langle (\D_0 \Phi)_a, I(\underline \Phi_b \nabla_x \underline \Phi_c) - (I\underline \Phi_b) \nabla_x (I\underline \Phi_c)\bigr\rangle \ dt\right|
\lesssim N^{-1/2+} \lesssim \frac{1}{N^{\left( s - \frac{1}{2}
\right) -}} $$

We argue using the following table; the estimates on $\D_0 \Phi$ and
$\underline  \Phi$ come from Lemma \ref{sigma-bound-lemma} and
\eqref{phi-bound} respectively. Note that the arguments are almost
identical to those in the previous section.

\begin{tabular}{|l|l|l|l|l|l|l|}
\hline
$a$ & $b$ & $c$ & $(\D_0 \Phi)_a \in$ & $\underline \Phi_b \in$ & $\nabla_{x,t} \underline \Phi_c \in$ & Lemma\\
 & & & $C \Ball(C^0_t \ldots)$ & $C \Ball(C^0_t \ldots)$ & $C \Ball(C^0_t \ldots)$ & \\
\hline
low & high & high & $\L^\infty_{10}$ & $N^{-1/2+} H^{1/2+}_x$ & $N^{-1/2+} H^{-1/2+}$ & \ref{low-freq}\\
high & low & high & $L^2_x$ & $\L^\infty_{10}$ & $L^2_x$ & \ref{com}\\
high & high & low & $N^{-1/2+} H^{-1/2+}_x$ & $N^{-1/2+} H^{1/2+}_x$ & $\L^\infty_{10}$  & \ref{low-freq} \\
\hline
\end{tabular}

Now suppose that $a, b, c$ are in one of the three remaining cases
in Figure \ref{freq-fig}.  Since we have dealt with all low
frequency issues there will no longer be a need to distinguish
between $|\xi|$ and $\langle \xi \rangle$, or between homogeneous
and inhomogeneous Sobolev norms, etc.

The Sobolev estimates which sufficed for all the other commutators
will not work here, and we must exploit the null structure.  Since
$[t_0,T]$ is contained inside $[t_0-\delta/2, t_0+\delta/2]$, it
will suffice from \eqref{phi-bound}, Lemma \ref{sigma-bound-lemma},
to prove the global spacetime estimate
\begin{equation}\label{glob-abc}
\begin{split}
&\left|\int_\R 1_{[t_0,T]}(t) \langle u_a, I\N_0(v_b, w_c) - \N_0(Iv_b, Iw_c)\rangle \ dt\right| \\
&\lesssim \frac{1}{N^{ ( s - 1/2 )-}} \| u_a \|_{H^{0,s- }} \| Iv_b
\|_{H^{1,s-}} \| Iw_c \|_{H^{1, s- }}
\end{split}
\end{equation}
where $u_a$, $v_b$, $w_c$ are supported in the Fourier regions
corresponding to $a$, $b$, $c$.

Now that we are working globally in spacetime we are able to use the
spacetime Fourier transform.  We may assume that the spacetime
Fourier transforms of $u_a$, $v_b$, $w_c$ are all real and
non-negative.

From \eqref{n1} we have
$$ |\widetilde{I\N_0(v_b,w_c)}(\tau,\xi)| \lesssim \int_{\xi = \xi_1 + \xi_2; \tau = \tau_1 + \tau_2} m(\xi_1 + \xi_2)
(\frac{|\xi_1 \wedge \xi_2|}{|\xi_1|} + \frac{|\xi_1 \wedge
\xi_2|}{|\xi_1+\xi_2|}) \widetilde{v_b}(\xi_1,\tau_{1}) \widetilde{
w_c}(\xi_2,\tau_{2})
$$

and
$$ |\widetilde{\N_0(Iv_b,Iw_c)}(\tau,\xi)| \lesssim \int_{\xi = \xi_1 + \xi_2; \tau = \tau_1 + \tau_2} m(\xi_1) m(\xi_2)
(\frac{|\xi_1 \wedge \xi_2|}{|\xi_1|} + \frac{|\xi_1 \wedge
\xi_2|}{|\xi_1+\xi_2|}) \widetilde{v_b}(\xi_1,\tau_{1}) \widetilde{
w_c}(\xi_2,\tau_{2})
$$
where $\xi_1 \wedge \xi_2$ is the anti-symmetric tensor with
components $\xi_1^i \xi_2^j - \xi_1^j \xi_2^i$.  Moreover, we have
\begin{align*} &|(I\N_0(v_b,w_c) - \N_0(Iv_b, Iw_c))^\sim(\tau,\xi)| \\
&\lesssim \int_{\xi = \xi_1 + \xi_2; \tau = \tau_1 + \tau_2}
|m(\xi_1 + \xi_2)-m(\xi_1) m(\xi_2)| (\frac{|\xi_1 \wedge
\xi_2|}{|\xi_1|} + \frac{|\xi_1 \wedge \xi_2|}{|\xi_1+\xi_2|}) \hat
v_b(\xi_1,\tau_1) \hat w_c(\xi_2,\tau_2).
\end{align*}

Also, we have
$$ |\hat 1_{[t_0,T]}(\tau)| \lesssim \langle \tau \rangle^{-1}.$$
By Parseval, we can therefore estimate the left-hand side of
\eqref{glob-abc} by
$$ \int_* \frac{w(\xi_1,\xi_2) |\xi_1 \wedge \xi_2| (|\xi_0|^{-1} + |\xi_1|^{-1}) \hat u_a(\xi_0,\tau_{0})
\hat v_b(\xi_1,\tau_{1}) \hat w_c(\xi_2,\tau_{2})}{ \langle \tau_0 +
\tau_1 + \tau_2 \rangle} $$ where $\int_*$ denotes an integration
over all $(\tau_0,\xi_0), (\tau_1, \xi_1), (\tau_2, \xi_2) \in \R
\times \R^3$ with $\xi_0 + \xi_1 + \xi_2 = 0$, and $w(\xi_1,\xi_2)$
is the symbol
$$ w(\xi_1,\xi_2) := \frac{|m(\xi_1+\xi_2) - m(\xi_1)m(\xi_2)|}{m(\xi_1) m(\xi_2)}.$$

We perform a Littlewood-Paley decomposition. Let $N_{j}, \, j=0,1,2$
range over dyadic numbers, with $N_j$ the nearest dyadic number to $|\xi_j|$, and let $\{\min,\med,\max\}$ be a
permutation of $\{0,1,2\}$ such that $N_{\min} \lesssim N_{\med}
\lesssim N_{\max}$.  Observe that $N_{\med} \sim N_{\max}$ since
$\xi_0 + \xi_1 + \xi_2 = 0$, and that $N_{\min} \gtrsim 1$ since
none of $a$, $b$, $c$ are low.  Also we have $N_{\max} \gtrsim N$
since at least one of $b, c$ is high. Clearly we have $|\xi_0|^{-1}
+ |\xi_1|^{-1} \lesssim N_{\min}^{-1}$.  Also write $\lambda_i :=
\langle |\xi_i| - |\tau_i| \rangle$ for $i=0,1,2$. Therefore, from
these notations and the definition of the $H^{s,b}$ norms, it will
thus suffice, by the Cauchy-Schwarz inequality, to prove
\begin{equation}\label{monster}
\begin{split}
\int_{*; N_{\min} \gtrsim 1,N_{max} \sim N_{med} \gtrsim N} & \frac{
|w(\xi_1, \xi_2)| |\xi_1 \wedge \xi_2| \prod_{j=0}^2
F_j(\xi_j,\tau_j)} {N_{\min} N_1 N_2 \lambda_0^{s-} \lambda_1^{s-}
\lambda_2^{s-} \langle \tau_0 + \tau_1 + \tau_2 \rangle}
\\
&\lesssim \frac{1}{N^{ \left( s-\frac{1}{2} \right)-}} \prod_{j=0}^2
\| F_j \|_{L^2_{\tau_j} L^2_{\xi_j}}
\end{split}
\end{equation}
for all non-negative functions $F_j$. 

Next, we recall the standard estimate for the null form symbol $|\xi_1 \wedge \xi_2|$.

\begin{lemma}[Symbol bound]\label{nf}
We have
$$|\xi_1 \wedge \xi_2| \lesssim N_{0}^{1/2} N_{1}^{1/2}
N_{2}^{1/2} (\langle \tau_{0} + \tau_{1} + \tau_{2} \rangle +
\lambda_{0} + \lambda_{1} + \lambda_{2})^{1/2}.
$$
\end{lemma}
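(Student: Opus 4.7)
My plan is to deduce the lemma from the Heron-type factorisation of $|\xi_1\wedge\xi_2|^2$, combined with the standard null-cone observation that one sign-combination of the $|\xi_j|$ must be controlled by $L:=\langle\tau_0+\tau_1+\tau_2\rangle+\lambda_0+\lambda_1+\lambda_2$.

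First I would expand $|\xi_1\wedge\xi_2|^2 = |\xi_1|^2|\xi_2|^2-(\xi_1\cdot\xi_2)^2$ and use $\xi_0 = -(\xi_1+\xi_2)$ to substitute $\xi_1\cdot\xi_2 = \tfrac12(|\xi_0|^2-|\xi_1|^2-|\xi_2|^2)$. A direct algebraic manipulation then yields the factorisation
$$
4|\xi_1\wedge\xi_2|^2 = \bigl(|\xi_0|+|\xi_1|+|\xi_2|\bigr)\bigl(-|\xi_0|+|\xi_1|+|\xi_2|\bigr)\bigl(|\xi_0|-|\xi_1|+|\xi_2|\bigr)\bigl(|\xi_0|+|\xi_1|-|\xi_2|\bigr),
$$
whose four factors are all non-negative thanks to the triangle inequality applied to the relation $\xi_i+\xi_j = -\xi_k$.

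Next I would reorder the indices so that $N_{\min}\leq N_{\med}\leq N_{\max}$; the constraint $\xi_0+\xi_1+\xi_2 = 0$ forces $N_{\med}\sim N_{\max}$. The key size observation is that the factor carrying a minus sign on $|\xi_k|$ is bounded by $|\xi_i|+|\xi_j|-|\xi_i+\xi_j|\leq 2\min(|\xi_i|,|\xi_j|)$ (with $\{i,j,k\}=\{0,1,2\}$). A short case check on which index attains $N_{\min}$ shows that in every configuration two of the four factors are $\lesssim N_{\min}$ while the remaining two are only $\lesssim N_{\max}$, giving the brutal bound $|\xi_1\wedge\xi_2|^2 \lesssim N_{\min}^2 N_{\max}^2$.

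To bring in $L$, I would use the null-cone relation $\tau_j = \epsilon_j|\xi_j|+O(\lambda_j)$ with $\epsilon_j:=\mathrm{sign}(\tau_j)\in\{\pm 1\}$; summing in $j$ gives
$$
\left|\epsilon_0|\xi_0|+\epsilon_1|\xi_1|+\epsilon_2|\xi_2|\right| \leq |\tau_0+\tau_1+\tau_2|+O(\lambda_0+\lambda_1+\lambda_2) \lesssim L.
$$
The left-hand side matches (up to an overall sign) exactly one of the four non-negative factors above, so that factor is $\lesssim L$. Replacing its trivial bound by $L$ in the product, I get $|\xi_1\wedge\xi_2|^2 \lesssim N_{\max}^2\cdot N_{\min}\cdot L$ if the distinguished factor was one of the two $N_{\min}$-sized ones, and $\lesssim N_{\min}^2\cdot N_{\max}\cdot L$ otherwise; since $N_{\min}\leq N_{\max}$, both bounds are at most $N_{\min}N_{\max}^2 L\sim N_0N_1N_2 L$, and taking square roots yields the lemma. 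The only mildly tedious point is the bookkeeping in the third paragraph --- checking in each of the three orderings of $(N_0,N_1,N_2)$ which two of the four factors drop to size $N_{\min}$ --- but this is elementary.
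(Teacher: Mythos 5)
Your proof is correct. Note that the paper does not actually prove this lemma: it simply cites \cite[Proposition 1]{kl-mac:hsd} and \cite[Proposition 8.1]{kl-sel:hsd}, so your argument is a self-contained substitute for that citation. Its content is essentially the standard one underlying those references, but your packaging via Heron's formula is clean and all the steps check out: the factorisation $4|\xi_1\wedge\xi_2|^2=(a+b+c)(-a+b+c)(a-b+c)(a+b-c)$ with $a=|\xi_0|$, $b=|\xi_1|$, $c=|\xi_2|$ is exact; the bookkeeping that the two factors carrying a minus sign on one of the two comparable large frequencies are $\lesssim N_{\min}$ while the other two are $\lesssim N_{\max}$ is right; and the sign-matching step works because any $(\epsilon_0,\epsilon_1,\epsilon_2)\in\{\pm1\}^3$ is, up to an overall sign, one of the four patterns $(+,+,+)$, $(-,+,+)$, $(+,-,+)$, $(+,+,-)$, so $|\epsilon_0 a+\epsilon_1 b+\epsilon_2 c|$ coincides with exactly one of the four non-negative factors and is $\lesssim \langle\tau_0+\tau_1+\tau_2\rangle+\lambda_0+\lambda_1+\lambda_2$ by the null-cone relation $\tau_j=\epsilon_j|\xi_j|+O(\lambda_j)$. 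The two resulting bounds $N_{\min}N_{\max}^2L$ and $N_{\min}^2N_{\max}L$ are both $\lesssim N_0N_1N_2L$ since $N_{\med}\sim N_{\max}$, which is forced by $\xi_0+\xi_1+\xi_2=0$ (and is also assumed in the region where the lemma is applied, where moreover $N_{\min}\gtrsim 1$, so degenerate frequencies cause no trouble). The only cosmetic caveat is the convention for $|\xi_1\wedge\xi_2|$ as a tensor norm, which differs from $\bigl(|\xi_1|^2|\xi_2|^2-(\xi_1\cdot\xi_2)^2\bigr)^{1/2}$ only by a harmless absolute constant.
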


\begin{proof}  See \cite[Proposition 1]{kl-mac:hsd} or \cite[Proposition 8.1]{kl-sel:hsd}.
\end{proof}

From this lemma we see that to prove \eqref{monster} it will suffice
to show that the expressions
\begin{equation}\label{monster-2}
\int_{*; N_{\min} \gtrsim 1, N_{max} \sim N_{med} \gtrsim N} \frac{
w(\xi_1,\xi_2) N_0^{1/2} \prod_{j=0}^2 F_j(\xi_j,\tau_j) } {N_{\min}
N_1^{1/2} N_2^{1/2} \lambda_0^{s-} \lambda_1^{s-} \lambda_2^{s-}
\langle \tau_0 + \tau_1 + \tau_2 \rangle^{1/2}}
\end{equation}
and
\begin{equation}\label{monster-3}
\int_{*; N_{\min} \gtrsim 1, N_{max}\sim N_{med} \gtrsim N} \frac{
w(\xi_1,\xi_2) N_0^{1/2} \prod_{j=0}^2 F_j(\xi_j,\tau_j) } {N_{\min}
N_1^{1/2} N_2^{1/2} \lambda_j^{( s- 1/2 )-} \lambda_k^{s-}
\lambda_l^{s-} \langle \tau_0 + \tau_1 + \tau_2 \rangle }
\end{equation}
are both $O \left( \frac{1}{N^{ ( s - 1/2 )-}} \right) \prod_{j=0}^2
\| F_j \|_{L^2_{\tau_j} L^2_{\xi_j}}$ for all permutations
$\{j,k,l\}$ of $\{0,1,2\}$. In fact, we shall study several cases
and show that all of them (except one) are $ O \left(
\frac{1}{N^{\frac{1}{2}-}} \right) $.

For any $\alpha > 0$, let $a_\alpha(t)$ denote the Fourier transform
of $\langle \tau \rangle^{-\alpha}$; this function appears
implicitly in \eqref{monster-2}, \eqref{monster-3} for $\alpha =
1/2, 1$.  We shall need the following $L^p_t$ estimates:

\begin{lemma}\label{a-ft}  Let $0 < \alpha \leq 1$.  Then $a_\alpha \in L^p_t$ for all $p < 1/(1-\alpha)$.
\end{lemma}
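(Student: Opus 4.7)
The plan is to establish the pointwise bound $|a_\alpha(t)| \lesssim |t|^{\alpha-1}$ for $|t| \leq 1$ (with a logarithmic substitute at $\alpha = 1$), together with rapid decay of $a_\alpha(t)$ as $|t| \to \infty$. Once these are in hand, the claim follows from the elementary computation
$$\|a_\alpha\|_{L^p_t}^p \lesssim \int_0^1 t^{(\alpha-1)p}\, dt + \int_1^\infty \langle t\rangle^{-Mp}\, dt,$$
which (with $M$ large) is finite precisely when $(\alpha - 1)p > -1$, i.e.\ $p < 1/(1-\alpha)$.

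To obtain the local bound I would split $\langle \tau\rangle^{-\alpha} = |\tau|^{-\alpha} + r(\tau)$, where $r(\tau) := \langle\tau\rangle^{-\alpha} - |\tau|^{-\alpha}$. Near $\tau = 0$ both summands are locally integrable (for $0<\alpha<1$), and at infinity a Taylor expansion in $\tau^{-2}$ gives $r(\tau) = O(|\tau|^{-\alpha-2})$, so $r \in L^1_\tau$; consequently $\hat r$ is bounded. The homogeneous part contributes $c_\alpha |t|^{\alpha-1}$ via the classical formula for the Fourier transform of the tempered distribution $|\tau|^{-\alpha}$ (valid for $0<\alpha<1$; see e.g.\ the standard computation in any text on tempered distributions). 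Combining yields $a_\alpha(t) = c_\alpha |t|^{\alpha - 1} + O(1)$. For the endpoint $\alpha = 1$ one handles $\langle\tau\rangle^{-1}$ separately by identifying its Fourier transform with the modified Bessel function $K_0$, which has only a logarithmic singularity at the origin and so lies in every $L^p_{t,\mathrm{loc}}$ with $p < \infty$, consistent with the bound $p < 1/(1-\alpha) = \infty$.

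For decay at infinity I would use integration by parts: for any integer $M$,
$$(it)^M a_\alpha(t) = \int_\R e^{-it\tau} \partial_\tau^M \langle\tau\rangle^{-\alpha}\, d\tau,$$
and the integrand is $O(\langle \tau\rangle^{-\alpha - M})$, hence in $L^1_\tau$ once $M > 1 - \alpha$. Choosing $M$ arbitrarily large yields $|a_\alpha(t)| \lesssim_M \langle t\rangle^{-M}$ for $|t| \geq 1$.

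There is no real obstacle here; the only nontrivial input is the classical identification of the Fourier transform of the homogeneous distribution $|\tau|^{-\alpha}$, and everything else reduces to elementary calculus. The combined estimate $|a_\alpha(t)| \lesssim |t|^{\alpha-1}\mathbf{1}_{|t|\leq 1} + \langle t\rangle^{-M}\mathbf{1}_{|t|\geq 1}$ then gives the asserted $L^p_t$ membership.
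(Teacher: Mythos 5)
Your proof is correct, but it follows a different route from the paper's. You obtain the pointwise profile $|a_\alpha(t)| \lesssim |t|^{\alpha-1}$ for $|t| \le 1$ (log at $\alpha=1$) plus rapid decay for $|t| \ge 1$ by splitting off the homogeneous part, $\langle\tau\rangle^{-\alpha} = |\tau|^{-\alpha} + r(\tau)$ with $r \in L^1_\tau$, invoking the classical formula $\widehat{|\tau|^{-\alpha}} = c_\alpha |t|^{\alpha-1}$ for $0<\alpha<1$, treating $\alpha=1$ separately via the Bessel kernel $K_0$, and getting the decay by repeated integration by parts (legitimate since $\partial_\tau^M\langle\tau\rangle^{-\alpha} \in L^1_\tau$ for $M \ge 1$, interpreted distributionally). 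The paper instead runs a single dyadic decomposition of the multiplier, writing $\langle\tau\rangle^{-\alpha}$ as $\sum_{k\ge 0} 2^{-k\alpha}\psi_k$ with $\psi_k$ bumps at scale $2^k$, and sums the kernel bounds $O(2^k\langle 2^k t\rangle^{-M})$ to get the same two pointwise estimates at once, uniformly in $0<\alpha\le 1$ and without any special-function or homogeneous-distribution input. What your approach buys is a sharper statement near the origin — the exact leading asymptotic $a_\alpha(t) = c_\alpha|t|^{\alpha-1} + O(1)$ rather than just an upper bound — at the cost of a case split at $\alpha=1$ and reliance on two classical identities; the paper's dyadic argument is more self-contained and handles the endpoint with no extra work. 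The final reduction from the pointwise bounds to $L^p_t$ membership for $p < 1/(1-\alpha)$ is the same elementary computation in both arguments.
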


\begin{proof}
We dyadically decompose $\check{a_\alpha}$ as $\sum_{k \geq 0}
2^{-k\alpha} \psi_k \check{a_\alpha}$ where each $\psi_k$ is a bump
function adapted to $[-2^k, 2^k]$ (possibly depending on $\alpha$).
Observe that $\hat \psi_k(t) = O( \langle 2^k t \rangle^{-M})$ for
any large $M > 0$. If one sums this, we see that $a_{\alpha}(t)$ is
rapidly decreasing for $|t| \gtrsim 1$. For $|t| \lesssim 1$, we
have $a_\alpha(t) = O(|t|^{\alpha-1})$ if $\alpha < 1$ and
$a_\alpha(t) = O( |\log|t||)$ if $\alpha=1$. The claim follows.
\end{proof}

We now estimate \eqref{monster-2}, \eqref{monster-3} separately in
the cases $\min = 0$ and $\min \neq 0$, giving four cases. In all
cases except Case 4, we will be able to obtain a decay of
$O(N^{-1/2+})$.

\textbf{Case 1: The bounding of \eqref{monster-2} when $\min = 0$.}

We use the crude bound (valid for all $\xi_0, \xi_1, \xi_2$)
\begin{equation}\label{m-bound}
w(\xi_1,\xi_2) \lesssim \frac{1}{m(N_1) m(N_2)} \lesssim
\frac{1}{m(N_{\max})^2} \lesssim N^{-1/2+} N_{\max}^{1/2-}
\end{equation}
to bound \eqref{monster-2} by
$$
N^{-1/2+} \int_{*; N_{\min} \gtrsim 1, N_{max} \sim N_{med} \gtrsim
N} \frac{ \prod_{j=0}^2 F_j(\xi_j,\tau_j) } {N_0^{1/2+} N_1^{1/2}
N_2^{0} \lambda_0^{s-} \lambda_1^{s-} \lambda_2^{s-} \langle \tau_0
+ \tau_1 + \tau_2 \rangle^{1/2}} ;$$

undoing the Fourier transform, it thus suffices to show
$$
 \left|\int_{\R \times \R^3} u_0 u_1 u_2 a_{1/2}\ dx dt\right| \lesssim \| u_0 \|_{H^{1/2+,s- }}
\| u_1 \|_{H^{1/2,s- }} \| u_2 \|_{H^{0,s- }}.
$$
But this follows from the Strichartz estimates $H^{1/2+,s-}
\subseteq L_{t}^{4+} L_{x}^{4}$, together with the embedding $H^{0,s-} \subseteq
L_{t}^{\infty-} L_{x}^{2}$, which follows from interpolation between $H^{0,0} = L^2_t L^2_x$ and $H^{0,1/2+} \subset C^0_t L^2_x$.

\textbf{Case 2: The bounding of \eqref{monster-2} when $\min \neq
0$.}

By symmetry we may assume $\min = 1$.  In this case we can bound
\begin{align*}
w(\xi_1,\xi_2) &\lesssim \frac{m(N_0) + m(N_1)m(N_2)}{m(N_1) m(N_2)} \\
&\lesssim \frac{m(N_{\max}) + m(N_{\min}) m(N_{\max})}{m(N_{\min}) m(N_{\max})} \\
&\lesssim m(N_{\min})^{-1} \\
&\lesssim N^{-1/2+} N_{\min}^{1/2-}.
\end{align*}
We can thus estimate \eqref{monster-2} by
$$
N^{-1/2+} \int_{*; N_{\min} \gtrsim 1} \frac{ \prod_{j=0}^2
F_j(\xi_j,\tau_j) } {N_1^{1+} \lambda_0^{s-} \lambda_1^{s-}
\lambda_2^{s-} \langle \tau_0 + \tau_1 + \tau_2 \rangle^{1/2}}
$$
so it would suffice to show that

$$ \left| \int_{\R \times \R^3} a_{1/2} u_0 u_1 u_2  dx dt
\right| \lesssim \| u_0 \|_{H^{0,s-}} \| u_1 \|_{H^{1+,s- }} \| u_2
\|_{H^{0,s-}}.
$$
But this follows from the Strichartz embeddings $H^{0,s-} \subset
L_{t}^{\infty} L_{x}^{2}$ and $H^{1+,s-} \subset L_{t}^{2+}
L_{x}^{\infty}$ (from \eqref{hsd-strichartz}), and Lemma \ref{a-ft}.

\textbf{Case 3: The bounding of \eqref{monster-3} when $\min = 0$.}

Since the $j=1$ and $j=2$ cases are symmetric, we may assume that $j
= 0$ or $j=1$.

We again use \eqref{m-bound} to bound \eqref{monster-3} by
$$
N^{-1/2+} \int_{*; N_{\min} \gtrsim 1,N_{max} \sim N_{med} \gtrsim
N} \frac{ \prod_{j=0}^2 F_j(\xi_j,\tau_j) } {N_0^{1/2} N_2^{1/2+}
\lambda_j^{\left( s - \frac{1}{2} \right) -} \lambda_k^{s- }
\lambda_l^{s-} \langle \tau_0 + \tau_1 + \tau_2 \rangle} .
$$
Undoing the Fourier transform and considering the $j=0$ and $j=1$
cases separately, it thus suffices to show
$$ \left|\int_{\R \times \R^3} u_0 u_1 u_2 a_1\ dx dt\right| \lesssim \| u_0 \|_{H^{1/2, ( s-1/2 )-}}
\| u_1 \|_{H^{0,s- }} \| u_2 \|_{H^{1/2+,s- }}. $$ and
$$ \left|\int_{\R \times \R^3} u_0 u_1 u_2 a_1\ dx dt\right| \lesssim \| u_0 \|_{H^{1/2,s- }}
\| u_1 \|_{H^{0, ( s- 1/2 )-}} \| u_2 \|_{H^{1/2+,s- }}.
$$
For the latter estimate, we place $u_0$ in $L^4_t L^4_x$, $u_1$ in $L^2_t L^2_x$, and $u_2$ in $L^{4+}_t L^4_x$, using \eqref{hsd-strichartz} and Lemma \ref{a-ft}.  Now we turn to the former.  To avoid excessive notation, we will pretend that $a_1$ is compactly supported rather than rapidly decreasing; the rapidly decreasing case can then be handled by a routine dyadic decomposition.  We may now assume that $u_0,u_1,u_2$ are compactly supported in time.  From \eqref{no-null} we have
$$
\| u_1 u_2 \|_{H^{-1/2, 0}} \lesssim \| u_1 \|_{H^{0, s-}} \| u_2 \|_{H^{1/2, s}}
$$
so it suffices to show that
$$ \left|\int_{\R \times \R^3} u v a_1\ dx dt\right| \lesssim \| u \|_{H^{1/2, ( s-1/2 )-}}
\| v \|_{H^{-1/2,0}}.$$ 
But this is easily established from Plancherel's theorem.

\textbf{Case 4: The bounding of \eqref{monster-3} when $\min \neq
0$.}

By symmetry we may take $\min=1$.  We may assume that $N_1 \ll N_0,
N_2$ since otherwise we could take $\min=0$ and be in Case 3.  We
can then bound \eqref{monster-3} by
$$ \int_{*; 1 \lesssim N_1 \ll N_0,N_2}
\frac{ w(\xi_1,\xi_2) \prod_{j=0}^2 F_j(\xi_j,\tau_j)} {N_1^{3/2}
\lambda_j^{( s-1/2)-} \lambda_k^{s-} \lambda_l^{s-} \langle \tau_0 +
\tau_1 + \tau_2 \rangle} . $$
Using Cauchy-Schwarz, we may fix $N_0$ and $N_2$; but we will still need to sum in $N_1$.

There are two subcases.

\begin{itemize}

\item Case $4.a$: $N_{1} \ll N$.

We will take $j=1$, which is the most difficult case; the other cases are treated similarly or are easier.
In this case $m(\xi_{1})=1$, and by the mean value theorem
\begin{equation}
\begin{array}{ll}
|w(\xi_{1},\xi_{2})| & \lesssim \frac{N_{1}}{N_{2}}.
\end{array}
\label{Eqn:BoundMult}
\end{equation}
Therefore we are reduced to proving that
\begin{equation}
\begin{array}{ll}
\int_{*; 1 \lesssim  N_{1}  \ll N_{0} \sim N_{2}}  \frac{
\prod_{j=0}^{2} F_{j}(\xi_{j},\tau_{j})}{N_2 |\xi_{0} +
\xi_{2}|^{\frac{1}{2}-} \lambda_{0}^{s-} \lambda_{1}^{ \left(  s
-\frac{1}{2} \right)-  }
 \lambda_{2}^{s -} \langle \tau_{0}+ \tau_{1} + \tau_{2} \rangle } & \lesssim
 \frac{1}{N^{\frac{1}{2}-}} \prod_{j=0}^{2} \| F_{j} \|_{L_{t}^{2} L_{x}^{2}}
\end{array}
\end{equation}
since $|\xi_{1}| =|\xi_{0} + \xi_{2}|$. In view of the assumptions,
it is enough to prove

\begin{equation}
\begin{array}{ll}
\left| \int_{\mathbb{R} \times \mathbb{R}^{3}} a_{1} D^{- \left(
\frac{1}{2}- \right)} ( u_{0} u_{2} ) u_{1} \, dx dt \right| &
\lesssim \| u_{0} \|_{H^{\frac{1}{4}+, s -}} \| u_{1} \|_{H^{0,
\left( s-\frac{1}{2} \right) -}} \| u_{2} \|_{H^{\frac{1}{4}+, s-}}
\end{array}
\end{equation}
But this inequality follows from Lemma \ref{a-ft}, H\"older's inequality and the
embeddings $ H^{0, \left( s- \frac{1}{2} \right) -} \subset
L_{t}^{2+} L_{x}^{2}$, $D^{- \left( \frac{1}{2} - \right)}  \left(
H^{\frac{1}{4}, s-}, \, H^{\frac{1}{4}, s-}  \right) \subset
L_{t}^{2} L_{x}^{2}$. The former embedding follows from interpolation between the embeddings $H^{0,0} = L^2_t L^2_x$ and $H^{0,1/2+} \subset C^0_t L^2_x$; the latter is
the $H^{s,b}$ version of the bilinear Strichartz estimates (see e.g. \cite{kl-tar:ym} or \cite[Corollary 1.4]{kl-rod-tao}).

\item Case $4.b$: $N_{1} \gtrsim N$.

Again we will take the most difficult case $j=1$; the other cases are treated similarly or are easier.
In this case, if $\beta > 1-s$, then we have

\begin{equation}
\begin{array}{ll}
|w(\xi_{1},\xi_{2})| & \lesssim \frac{1}{m(N_{1})} \\
& \lesssim \left( \frac{N_{1}}{N} \right)^{1-s} \\
& \lesssim \left( \frac{N_{1}}{N} \right)^{\beta --}
\end{array}
\end{equation}
where the $--$ exponent will dominate the various $-$ and $+$ exponents appearing elsewhere.

We are seeking for $\beta > 1-s$ as large as possible such that

\begin{equation}
\begin{array}{ll}
\int_{*; 1 \lesssim  N_{1}  \ll N_{0} \sim N_{2}}  \left(
\frac{N_{1}}{N} \right)^{\beta --} \frac{ N_{1}^{0+} \prod_{j=0}^{2}
F_{j}(\xi_{j},\tau_{j})} { N_{1}^{\frac{3}{2}} \lambda_{0}^{s -}
\lambda_{1}^{ \left( s -\frac{1}{2} \right) - }
 \lambda_{2}^{s -} \langle \tau_{0}+ \tau_{1} + \tau_{2} \rangle } & \lesssim
 \frac{1}{N^{\beta-}} \prod_{j=0}^{2} \| F_{j} \|_{L_{t}^{2} L_{x}^{2}}
\end{array}
\label{Eqn:IncreaseLastCase}
\end{equation}
for fixed $N_1$; we use the $N_1^{0+}$ factor to sum in $N_1$.

It is enough to find $\beta$ as large as possible such that

\begin{equation}\label{threes}
\begin{array}{ll}
\left| \int_{\mathbb{R} \times \mathbb{R}^{3} } a_{1}   u_{0}
P_{N_{1}}u_{1}  u_{2} \, dx dt \right| & \lesssim N_{1}^{0+} \|
u_{0} \|_{H^{0,s-}} \| u_{1} \|_{H^{\left( \frac{3}{2} -\beta
\right) -, \left( s- \frac{1}{2} \right)- }} \| u_{2} \|_{H^{0,s-}};
\end{array}
\end{equation}
the $--$ exponent in $N_1^{\beta--}$ can safely absorb the various $N_1^{0+}$ factors present with enough room to spare to still allow a summation in $N_1$.

But if $\beta < s -\frac{1}{2}$ then \eqref{threes} comes from Holder
inequality, the embedding $H^{0,s -} \subset L_{t}^{\infty -}
L_{x}^{2}$ and the embedding $ N_{1}^{0+} H^{ \left( \frac{3}{2} -
\beta \right)-, \left( s- \frac{1}{2} \right) -} \subset L_{t}^{2+}
L_{x}^{\infty} $. The former comes from the interpolation of $
H^{0,s -} \subset L_{t}^{2} L_{x}^{2}$ and $ H^{0,s-} \subset
L_{t}^{\infty} L_{x}^{2}$. The latter comes from the interpolation
of  \\
$ \| P_{N_{1}} v \|_{ L_{t}^{2++} L_{x}^{\infty}} \lesssim
N_{1}^{0++} \| P_{N_{1}} v \|_{ L_{t}^{2++} L_{x}^{\infty --}}
\lesssim N_{1}^{0++} \| v \|_{H^{1--,\frac{1}{2}+}}$ and the Sobolev
embedding $H^{\frac{3}{2}+,0} \subset L_{t}^{2} L_{x}^{\infty}$.
Therefore, by choosing $\beta =\left( s -\frac{1}{2} \right) -$
\footnote{Notice that this implies that $\beta > 1-s$, since $s >
\frac{3}{4}$ }, we get an increase of $O \left( \frac{1}{N^{ \left(
s -\frac{1}{2} \right)-}} \right)$ in (\ref{Eqn:IncreaseLastCase}).

\end{itemize}

\appendix

\section{Lack of extra smoothing}\label{lack-sec}

In the study of global well-posedness for the cubic nonlinear wave
equation (NLW) below the energy norm, see \cite{kpv:gwp}, an
important role is played by the \emph{extra smoothing phenomenon},
that the nonlinear component of the evolution is in the energy
class even if the initial data is not.  For instance, if $\Phi \in
C^0_t H^s_x \cap C^1_t H^{s-1}_x$ solves (NLW) for some $3/4 < s <
1$, then we have $\Phi - \Phi_\lin \in C^0_t H^1_x \cap C^1_t
L^2_x$, where $\Phi_\lin$ is the solution to the free wave equation
$\Box \Phi_\lin = 0$ with initial data $\Phi[0]$.  See
\cite{kpv:gwp} for details.  The reliance on such a smoothing phenomenon
is not unique to the work \cite{kpv:gwp}, but is rather an important feature
of all applications of the truncation method put forward by Bourgain (e.g. \cite{borg:book}).

In this appendix we show why this extra smoothing phenomenon fails
in a certain quantitative sense for the system (MKG-CG).
Specifically, we show

\begin{theorem}[Lack of extra smoothing]  Let $3/4 < s < 1$, and let $\eps > 0$ be sufficiently small depending on $s$.  Then for any $M > 0$ there exists a solution $\Phi \in C^0_t H^s_x \cap C^1_t H^{s-1}_x$ to (MKG-CG) with $\| \Phi[0] \|_{[H^s]} \leq \eps$ such that
$$ \| \phi[1] - \phi_\lin[1] \|_{[H^1]} \geq M,$$
where $\phi_\lin$ is the solution to the free wave equation $\Box
\phi_\lin = 0$ with initial data $\phi[0]$.
\end{theorem}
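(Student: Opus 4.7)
The plan is to construct a one-parameter family of initial data $\Phi[0]$, parametrized by a large frequency $N$, with $\| \Phi[0] \|_{[H^s]} \leq \eps$, such that the associated solution $\phi$ differs from the linear evolution $\phi_\lin$ by a term of size $\gtrsim \eps^2 N^{1-s}$ in $[H^1]$ at time $t=1$. Since $1-s>0$, this quantity diverges as $N\to\infty$, and hence exceeds any prescribed $M$ once $N$ is chosen large enough depending on $M,\eps,s$.

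Explicitly, I fix smooth cutoffs $\chi\in C_c^\infty(\R^3)$ and $f\in C_c^\infty(\R^2)$, and set
$$
\phi(0,x) := \eps N^{-s}\chi(x)e^{iNx_1},\qquad \partial_t \phi(0,x) := -i\eps N^{1-s}\chi(x)e^{iNx_1},
$$
so that $\phi_\lin$ is (to leading order) a right-moving wave packet of spatial frequency $\sim N\mathbf{e}_1$ with $H^s_x$-norm $O(\eps)$. I take $\underline A(0,x) := \eps f(x_2,x_3)\mathbf{e}_1$, which is automatically divergence-free, and $\partial_t\underline A(0,x) := 0$; then I use Proposition \ref{a0-est} to determine $A_0(0)$ and $\partial_t A_0(0)$ from the compatibility conditions \eqref{compat}, and these components carry $\dot H^1_x\times L^2_x$ norm $O(\eps^2)$. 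The overall data satisfies $\|\Phi[0]\|_{[H^s]}\lesssim \eps$, and for $\eps$ sufficiently small Theorem \ref{hs-lwp} guarantees a solution on $[0,1]$.

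I will then write $\phi=\phi_\lin+\psi$ and analyze the first Duhamel iterate
$$
\psi_1(t) := -2i\int_0^t \frac{\sin((t-t')\sqrt{-\Delta})}{\sqrt{-\Delta}}\bigl((\P\underline A_\lin)(t')\cdot\nabla_x\phi_\lin(t')\bigr)\,dt'
$$
coming from the worst nonlinear term in \eqref{bphi-eq}. The heart of the argument is that the source $\P\underline A_\lin\cdot\nabla_x\phi_\lin\approx i\eps^2 N^{1-s}f(x_2,x_3)\chi(x-t'\mathbf{e}_1)e^{iN(x_1-t')}$ has spacetime Fourier support on the forward light cone up to modulation distance $O(1/N)$, so the wave kernel acts \emph{resonantly} and produces secular growth linear in $t$. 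A direct Fourier-side computation will give
$$
\|\psi_1(1)\|_{\dot H^1_x}+\|\partial_t\psi_1(1)\|_{L^2_x}\gtrsim \eps^2 N^{1-s}.
$$
The remaining nonlinear terms in \eqref{bphi-eq} involve $A_0$ (quadratically small by Proposition \ref{a0-est}) or genuine cubic combinations, and contribute only $O(\eps^3)$ in $[H^1]$ at $t=1$.

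The main obstacle is controlling the iteration error $\psi-\psi_1$. Higher Picard iterates involve substituting $\psi$ (of size $\lesssim \eps$ in $[H^s]$) back into the bilinear term in place of $\phi_\lin$, or replacing $\underline A_\lin$ by its nonlinear correction $\underline A-\underline A_\lin$; each such substitution costs an additional power of $\eps$. Using the multilinear and Strichartz-type estimates already assembled in Sections \ref{bil-sec} and \ref{local-sec}, these remainders can be bounded by $O(\eps^3 N^{1-s})$ or better in $[H^1]$. Taking $\eps$ small enough, independently of $N$, then ensures $\psi_1$ dominates, yielding $\|\phi[1]-\phi_\lin[1]\|_{[H^1]}\gtrsim \eps^2 N^{1-s}$; choosing $N$ large enough that $\eps^2 N^{1-s} \geq M$ completes the proof.
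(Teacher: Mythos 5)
Your construction has two problems, one cosmetic and one substantive. The cosmetic one: the data $\underline A(0,x)=\eps f(x_2,x_3)\mathbf{e}_1$ with $f\in C^\infty_c(\R^2)$ does not decay in $x_1$, so it is not in $L^2_x(\R^3)$ and $\|\Phi[0]\|_{[H^s]}=\infty$; you must truncate in $x_1$ (and then restore $\div \underline A(0)=0$, e.g.\ by applying $\P$ or by prescribing $\hat{\underline A}(0)$ as a bump at a fixed low frequency transverse to $\mathbf{e}_1$ with divergence-free components, as in the paper's choice, which still lets you invoke Proposition \ref{a0-est} and \eqref{compat} for $A_0$). Relatedly, your claim that the source $\underline A_\lin\cdot\nabla_x\phi_\lin$ sits within modulation $O(1/N)$ of the cone is not accurate: $A_\lin$ oscillates in time at frequency comparable to its spatial frequency, so the modulation is of that order, and you only retain secular growth on $[0,1]$ by taking that frequency small (say $\sim 1/10$). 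These points are fixable.

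The substantive gap is the remainder estimate. Your argument needs $\|\phi[1]-\phi_\lin[1]-\psi_1[1]\|_{[H^1]}=o(\eps^2N^{1-s})$, i.e.\ an \emph{energy-norm} bound on the full nonlinear remainder for this frequency-$N$ data. The estimates assembled in Sections \ref{bil-sec} and \ref{local-sec} control the nonlinearity only in $H^{s-1,s-1}$ and the solution in $H^{s,3/4+}$; they show $\phi-\phi_\lin$ is $O(\eps^2)$ at regularity $s$, not at regularity $1$, and converting an $H^{s-1}$-type bound on $\Box$ of the remainder into an $H^1\times L^2$ bound costs $N^{1-s}$ only after one knows the remainder's frequency support is $O(N)$ uniformly in the iteration — which higher Picard iterates (frequencies $2N,3N,\dots$) and the high-high output in the $\underline A$-equation do not give for free. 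So "these remainders can be bounded by $O(\eps^3N^{1-s})$ in $[H^1]$ using the estimates already assembled" is an unproved claim; it is plausible on an amplitude count, but it is precisely the new frequency-localized energy estimate your route requires, and it does not follow from anything quoted. Note that the paper's proof is structured to avoid this issue entirely: it argues by contradiction, pairs $\Box\phi$ against a free wave $f_\lin$ at frequency $\sim N$ normalized in $[H^{1-s}]$ (so the hypothesized smoothing bound \eqref{phif} contributes only $N^{s-1}$), and then the perturbative errors need only be small in $H^{s-1,s-1}$ — exactly what \eqref{bpi} and the local theory supply — reducing the lower bound to an explicit trilinear near-cone resonance integral. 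To complete your direct approach you must either prove the missing energy-norm control of the second and higher iterates, or dualize against a high-frequency linear wave as the paper does.
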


\begin{proof}  Suppose this were not the case.  Then we could find $3/4 < s < 1$ and arbitrarily small $\eps$ such that
$$ \| \phi[1] - \phi_\lin[1] \|_{[H^1]} \lesssim 1$$
whenever $\Phi$ solved (MKG-CG) with $\| \Phi[0] \|_{[H^s]} \leq
\eps$, where we allow the implicit constant to depend on $s$ and
$\eps$.  By Green's theorem for the d'Lambertian we thus see that
\begin{equation}\label{phif}
\left|\int_0^1 \int_{\R^3} \Box \phi(t,x) f_\lin(t,x)\ dx dt\right|
\lesssim 1
\end{equation}
whenever $f_\lin$ is a solution to the free wave equation with
$\|f[1]\|_{[L^2]} \sim \|f[0]\|_{[L^2]} \lesssim 1$.  As we shall
see, it is the component $-2i(\P \underline{A}) \cdot \nabla_x
\phi$, with the derivative of $\phi$ present, which causes
\eqref{phif} to fail, even with the presence of null structure.

We now work on the spacetime slab $[0,1] \times \R^3$. Modifying
slightly the proof of Theorem \ref{hs-lwp} in Section
\ref{local-sec}, we see (if $\eps$ is small enough) that $\Phi$ will
obey the estimates
$$\| \underline{\Phi} \|_{H^{s,3/4+}_{[0,1]}}
+ \| \nabla_{x,t} \underline{\Phi} \|_{H^{s-1,3/4+}_{[0,1]}} + \|
\nabla_{x,t} A_0 \|_{L^2_t H^{1/2+}_x} \lesssim \eps.$$ In fact,
from \eqref{ao} we have
$$ \| \nabla_{x,t} A_0 \|_{L^2_t H^{1/2+}_x} \lesssim \eps^2$$
while a small modification of the proof of \eqref{uphi} yields
$$ \| \underline{\Phi} - \underline{\Phi}_\lin \|_{H^{s,3/4+}_{[0,1]}} \lesssim \eps^2.$$
Now, from \eqref{bphi-eq} we have
$$ \Box \phi = - 2i (\P \underline{A}) \cdot \nabla_x \phi
+ \O( \N_1 ) + \O( \N_3 ).$$ But from the above estimates and the
arguments in Section \ref{local-sec} we have
$$ \| \O( \N_1 ) + \O( \N_3 ) \|_{H^{s-1,s-1}_{[0,1]}} \lesssim \eps^3$$
and similarly from the above estimates and \eqref{null} we have
$$ \| (\P \underline{A}) \cdot \nabla_x \phi - (\P \underline{A}_\lin) \cdot \nabla_x \phi_\lin
\|_{H^{s-1,s-1}_{[0,1]}} \lesssim \eps^3.$$ We thus conclude that
\begin{equation}\label{bpi}
\| \Box \phi + 2i (\P \underline{A}_\lin) \cdot \nabla_x \phi_\lin
\|_{H^{s-1,s-1}_{[0,1]}} \lesssim \eps^3.
\end{equation}
Now let $N \gg 1$ be a large parameter to be chosen later, and let
$f_\lin$ be a solution to the free wave equation with initial data
$f[0]$ and Fourier transform supported on the region $|\xi| \sim 1$,
and normalized so that $\|f[0]\|_{[H^{1-s}]} \lesssim 1$.  Then by
energy estimates and \eqref{bpi} we conclude that
$$\left|\int_0^1 \int_{\R^3} [\Box \phi + 2i (\P \underline{A}_\lin) \cdot \nabla_x \phi_\lin ] f_\lin\ dx dt\right| \lesssim \eps^3$$
while from \eqref{phif} we have
$$\left|\int_0^1 \int_{\R^3} [\Box \phi] f_\lin\ dx dt\right| \lesssim N^{s-1}.$$
If $N$ is sufficiently large depending on $\eps, s$, we conclude
that
\begin{equation}\label{intor}
\left|\int_0^1 \int_{\R^3} [ (\P \underline{A}_\lin) \cdot \nabla_x
\phi_\lin ] f_\lin\ dx dt\right| \lesssim \eps^3.
\end{equation}
We now construct $\underline{A}[0]$, $\phi[0]$, $f[0]$ so that the
above statement fails, yielding the contradiction.  Note that we
need to obey the compatibility conditions \eqref{compat}, but due to
Proposition \ref{a0-est} we can always do so by selecting $A_0[0]$,
so long as we have the divergence-free condition $\div
\underline{A}[0] = 0$, in which case we can drop the Leray
projection $\P$ in \eqref{intor}.

We shall select $\underline{A}[0]$, $\phi[0]$, $f[0]$ to have zero
initial velocity:
$$ \partial_t \underline{A}(0) = \partial_t \phi(0) = 0 = \partial_t f(0) = 0.$$
Then using the Fourier transform, \eqref{intor} becomes
\begin{equation}\label{intor-2}
\left|\int_0^1 \int_{\R^3} \int_{\R^3} \cos(t |\eta|) \cos(t |\xi|)
\cos(t |\xi+\eta|) \xi \cdot \hat{\underline{A}}(0,\eta) \hat
\phi(0,\xi) \hat f(0,-\xi-\eta)\ d\xi d\eta\right| \lesssim \eps^3.
\end{equation}
We let $e_1,e_2,e_3$ be the standard basis for $\R^3$.  We choose
\begin{align*}
\hat \phi(0,\xi) &:= \eps N^{-s} 1_{|\xi - N e_1| \leq 1} \\
\hat f(0,\xi) &:= \eps N^{s-1} 1_{|\xi + N e_1| \leq 1} \\
\hat A_1(0,\eta) &:= 1_{|\eta - e_2/10| \leq 1/100} \\
\hat A_2(0,\eta) &:= - \frac{\eta_1}{\eta_2} 1_{|\eta - e_2/10| \leq 1/100} \\
\hat A_3(0,\eta) &:= 0
\end{align*}
then $\underline{A}$ is divergence-free, and we see for $\xi, \eta$
in the support of $\hat \phi(0)$, $\hat{\underline{A}}(0)$
respectively that
$$ \frac{1}{100} \leq \int_0^1 \cos(t |\eta|) \cos(t |\xi|) \cos(t |\xi+\eta|)\ dt \leq 1$$
and that
$$ \xi \cdot \hat{\underline{A}}(0,\eta) \hat \phi(0,\xi) =
(1 + O(N^{-1})) \eps N^{1-s} 1_{|\eta - e_2/10| \leq 1/100} 1_{|\xi
- N e_1| \leq 1} $$ and so \eqref{intor-2} simplifies to
$$\int_{\R^3} \int_{\R^3} 1_{|\eta - e_2/10| \leq 1/100} 1_{|\xi - N e_1| \leq 1}
1_{|\xi + \eta - N e_1| \leq 1}\ d \xi d\eta \lesssim \eps.$$ But
this can easily be seen to be false for sufficiently small $\eps$,
giving the desired contradiction.
\end{proof}

\begin{remark}  The above construction shows that $\phi[1] - \phi_\lin[1]$ can be made arbitrarily large in the energy norm even when $\| \Phi \|_{[H^s]}$ is small.  It is possible to modify the above construction (using multiple frequency scales $N$) to in fact make $\phi[1] - \phi_\lin[1]$ have infinite energy; we leave the details to the reader.
\end{remark}

\end{document}